\documentclass[a4paper, 10pt, twoside, notitlepage]{amsart}

\usepackage[utf8]{inputenc}
\usepackage{color}
\usepackage{amsmath} 
\usepackage{amssymb} 
\usepackage{amsthm}
\usepackage{geometry}
\usepackage{graphicx}
\graphicspath{{figures/}}
\usepackage{esint}
\usepackage[colorlinks=true,linkcolor=blue]{hyperref}

\theoremstyle{plain}
\newtheorem{thm}{Theorem}
\newtheorem{prop}{Proposition}[section]
\newtheorem{lem}[prop]{Lemma}
\newtheorem{cor}[prop]{Corollary}

\newtheorem{rmk}[prop]{Remark}
\newtheorem{claim}[prop]{Claim}

\newcommand {\R} {\mathbb{R}} 
 \newcommand {\N} {\mathbb{N}}
 
\newcommand {\p} {\partial}

\newcommand {\supp} {{\rm{supp}}}

\newcommand {\diag} {\text{diag}}
\newcommand {\rank} {\rm{rank}}
\newcommand {\conv} {\text{conv}}

\DeclareMathOperator {\dist} {dist}

\DeclareMathOperator{\F} {\mathcal{F}}

\pagestyle{headings}

\title[On Scaling Laws for Multi-Well Nucleation Problems]{On Scaling Laws for Multi-Well Nucleation Problems without Gauge Invariances}
\author{Angkana Rüland}
\address{Institut f\"ur Angewandte Mathematik, Ruprecht-Karls-Universit\"at Heidelberg, Im Neuenheimer Feld 205, 69120 Heidelberg, Germany}
\email{Angkana.Rueland@uni-heidelberg.de}
\author{Antonio Tribuzio}
\address{Institut f\"ur Angewandte Mathematik, Ruprecht-Karls-Universit\"at Heidelberg, Im Neuenheimer Feld 205, 69120 Heidelberg, Germany}
\email{Antonio.Tribuzio@uni-heidelberg.de}

\begin{document}

\begin{abstract}
In this article we study scaling laws for simplified multi-well nucleation problems without gauge invariances which are motivated by models for shape-memory alloys. Seeking to explore the role of the order of lamination on the energy scaling for nucleation processes, we provide scaling laws for various model problems in two and three dimensions. In particular, we discuss (optimal) scaling results in the volume and the singular perturbation parameter for settings in which the surrounding parent phase is in the first, the second and the third order lamination convex hull of the wells of the nucleating phase. Furthermore, we provide a corresponding result for the setting of an infinite order laminate which arises in the context of the Tartar square. In particular, our results provide isoperimetric estimates in situations in which strong nonlocal anisotropies are present.
\end{abstract}
\maketitle

\section{Introduction}

Motivated by nucleation problems in shape-memory alloys, in this article, we study scaling laws for simplified, highly non-convex multi-well model problems with a prescribed volume constraint and with surface energy regularizations. Adopting a variational point of view and considering these results in the context of phase-transforming systems, we interpret these scaling laws as nucleation barriers for the nucleation of one phase within a matrix of the other phase. Typical systems which we have in mind are, for instance, the description of martensitic inclusions within an austenite matrix in the context of phase-transformations in shape-memory-alloys \cite{Bhat, M1}. Mathematically these questions correspond to isoperimetric problems in which there is a competition between a perimeter contribution and strong nonlocal anisotropies which are dictated by the multi-well energies.

\subsection{The models and the general setting}

Let us describe the models which we are considering in the following sections in more detail:
We study an inclusion of a phase with several variants -- in the shape-memory context this would correspond to the ``martensite'' phase -- inside a parent phase, the ``austenite'' phase. Following \cite{BJ92,B4} we adopt a variational perspective and consider elastic energies of the form
\begin{equation}
\label{eq:elast1}
E_{el}(u)=\int_{\R^n}\dist^2(\nabla u,K_0)dx,
\quad
K_0=K\cup\{\textbf{0}\} \subset \R^{n\times n}
\end{equation}
with $n\ge 2$ and $u\in H^1(\R^n;\R^n)$.
Here, physically, $u:\R^n \rightarrow \R^n$ models the deformation of the material and the set $K_0$ corresponds to the stress-free states at the critical temperature. We use $\textbf{0} \in \R^{n\times n}$ to denote the zero matrix, which models the austenite phase. Seeking to provide optimal scaling laws, we simplify the problem and do not include the typical gauge invariances arising from the physical requirement of frame-indifference, but study quantitative and discrete versions of $m$-well problems as proposed in \cite{B3}. Here, a main objective is the investigation of the role of the order of lamination of the parent phase with respect to the nucleating phase. To this end, we focus on sets $K\subset\R^{n\times n}$ of diagonal matrices. Moreover, we concentrate on situations in which a suitable lamination or rank-one convex hull of $K$ contains the zero matrix.

Following \cite{Bhat,CO,CO1}, we express the elastic energies with the help of phase indicators which allows to decouple the gradient and the phase indicator (i.e. the projection of $\nabla u$ onto $K_0$). This leads to elastic energies of the following form
\begin{equation}\label{eq:el-chi}
E_{el}(\chi):=\inf_{u\in H^1(\R^n;\R^n)}E_{el}(u,\chi),
\quad \text{where} \quad
E_{el}(u,\chi):=\int_{\R^n}|\nabla u-\chi|^2dx
\end{equation}
with $\chi\in BV(\R^n;K_0)$ and $\chi=\diag(\chi_{1,1},\dots,\chi_{n,n})$.

Seeking to study the nucleation behaviour of a minority phase in a majority phase and to also include surface energies into the model, we further introduce the length scale $\epsilon>0$ by adding to the elastic energy $E_{el}$ a singular higher-order term of the type
$$
E_{surf}(\chi):=|D\chi|(\R^n),
$$
where $|D\chi|(\R^n)$ denotes the total variation semi-norm of $\chi$.

This leads to a singular perturbation problem consisting of a combination of elastic and surface energies
\begin{equation}\label{eq:en-tot}
E_\epsilon(u,\chi) := E_{el}(u,\chi)+\epsilon E_{surf}(\chi)
\quad \text{and} \quad
E_\epsilon(\chi) := E_{el}(\chi)+\epsilon E_{surf}(\chi).
\end{equation}

In this setting, it is our main objective to investigate the scaling behaviour of the minimal energy depending on the volume of the inclusion, thus providing (almost) matching upper and lower scaling bounds for the quantity
\begin{equation}\label{eq:en-vol}
E_\epsilon(V):=\inf\Big\{E_\epsilon(\chi) \,:\, |\supp(\chi)|=V\Big\}.
\end{equation}
A key emphasis here will be on the role of the order of lamination of the parent phase with respect to the nucleating phase which we aim to investigate in two and three dimensions for model problems.

\subsection{The main results}

We study the scaling behaviour in the described isoperimetric problems depending on the order of lamination of the parent phase with respect to the minority phase. In what follows, we thus consider four model problems and determine the corresponding scaling laws for the nucleation of a martensitic nucleus in the austenite parent phase in the respective settings.

\subsubsection{Laminates of first order}
\label{sec:intro_first_order}
As the most basic example, we begin by studying the situation in which the parent phase is rank-one connected with the nucleating phase. Here we distinguish two settings: In the first case, only two matrices are exactly stress-free (the martensite and the austenite), while in the second case three matrices -- two variants of martensite in the inclusion and one in the parent phase -- are exactly stress free.

We begin by considering the case of two stress-free states, for which $K=\{A\}$ for some $A\in \diag(n,\R)$ with $\rank(A) = 1$.  A similar, more complex setting including a linear gauge symmetry (in the linearized theory of elasticity) had been studied in \cite{KK} where optimal scaling bounds had been deduced. Relying on the argument from \cite{KK}, also in our setting, we recover the same scaling law behaviour in this situation:

\begin{thm}[Theorem 2.1 \cite{KK}]
\label{thm:KK}
Let $E_{\epsilon}(V)$ be as in \eqref{eq:en-vol} and let $K=\{A\}$ for some $A\in\diag(n,\R)$ with $\rank(A)=1$. Then there exist two positive constants $C_2>C_1>0$ depending on $K$ and $n$ such that for every $V>0$ and  every $\epsilon>0$ there holds
$$
C_1 r_\epsilon(V) \le E_\epsilon(V) \le C_2 r_\epsilon(V),
\quad \text{where} \quad
r_\epsilon(V)=\begin{cases}
\epsilon V^\frac{n-1}{n} & V\le\epsilon^n, \\
\epsilon^\frac{n}{2n-1}V^\frac{2n-2}{2n-1} & V>\epsilon^n.
\end{cases}
$$
\end{thm}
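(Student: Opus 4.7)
The plan is to establish matching upper and lower bounds, handling the two volume regimes $V\le\epsilon^n$ and $V>\epsilon^n$ separately and following the strategy of Knüpfer–Kohn (Theorem~2.1 of \cite{KK}).

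\textbf{Upper bound.} In the small-volume regime $V\le\epsilon^n$ I take $\chi=A\mathbf{1}_{B_r}$ on a ball of volume $V$, so that $E_{surf}(\chi)\sim V^{(n-1)/n}$. Exploiting the rank-one structure $A=a\otimes e$, the elastic energy reduces to a scalar Fourier-multiplier expression, giving the crude bound $E_{el}(\chi)\lesssim |A|^2 V$, and this is dominated by $\epsilon V^{(n-1)/n}$ whenever $V\le\epsilon^n$. In the large-volume regime $V>\epsilon^n$ I use a slab ansatz: let $\Omega$ be an $h\times L^{n-1}$ box with short side $h$ in the direction $e$, so $V=hL^{n-1}$. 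Taking $u=a\,\phi(x\cdot e)\,\psi(x_\perp)$ with $\phi$ a Lipschitz profile of slope $1$ on $(0,h)$ and saturated outside and $\psi$ a smooth cutoff of the cross-section with transition width $\ell$, a direct computation gives $E_{el}(u,\chi)\lesssim h\ell L^{n-2}+h^2 L^{n-2}/\ell$; optimizing in $\ell\sim h$ yields elastic energy $\lesssim h^2 L^{n-2}$ against perimeter $\sim L^{n-1}$. The total $h^2 L^{n-2}+\epsilon L^{n-1}$, subject to $V=hL^{n-1}$, is minimized at $h\sim\epsilon^{(n-1)/(2n-1)}V^{1/(2n-1)}$ and produces $E_\epsilon(\chi)\sim \epsilon^{n/(2n-1)}V^{(2n-2)/(2n-1)}$; the compatibility constraint $h\le L$ is exactly $V\ge\epsilon^n$.

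\textbf{Lower bound.} For $V\le\epsilon^n$ the classical isoperimetric inequality already yields $\epsilon E_{surf}(\chi)\gtrsim\epsilon V^{(n-1)/n}$. For $V>\epsilon^n$ the decisive step is the interpolation estimate
$$V^2\ \lesssim\ E_{el}(\chi)\cdot E_{surf}(\chi)^{n/(n-1)}.$$
Once this is available, an application of (weighted) AM–GM with exponents $(n-1)/(2n-1)$ and $n/(2n-1)$ converts it into $E_{el}(\chi)+\epsilon E_{surf}(\chi)\gtrsim \epsilon^{n/(2n-1)}V^{(2n-2)/(2n-1)}$; the unconstrained optimum picks $E_{surf}\sim (V^2/\epsilon)^{(n-1)/(2n-1)}$, which agrees with the isoperimetric floor $V^{(n-1)/n}$ precisely at $V=\epsilon^n$, so the two regimes match continuously at the threshold. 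To prove the interpolation, I use the rank-one structure to write $\chi=A\mathbf{1}_\Omega$, reducing to the scalar Fourier representation
$$E_{el}(\chi)=|a|^2\int_{\R^n}|\widehat{\mathbf{1}_\Omega}(\xi)|^2\,\frac{|\xi|^2-(e\cdot\xi)^2}{|\xi|^2}\,\frac{d\xi}{(2\pi)^n},$$
and combine it with the BV Fourier bound $|\widehat{\mathbf{1}_\Omega}(\xi)|\le E_{surf}(\chi)/|\xi|$ and the $L^1$ bound $|\widehat{\mathbf{1}_\Omega}|\le V$. Writing $V=\|\widehat{\mathbf{1}_\Omega}\|_{L^2}^2/(2\pi)^n$ via Plancherel, I split the $\xi$-integral into a thin cone around the rank-one axis $\R e$, on which the multiplier is small but $\widehat{\mathbf{1}_\Omega}$ is controlled by the $L^1$ and BV bounds, and its complement, on which the multiplier is coercive so $\widehat{\mathbf{1}_\Omega}$ is controlled by $E_{el}(\chi)$; optimizing the opening angle of the cone produces the claimed inequality.

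\textbf{Main obstacle.} All the real work sits in the interpolation inequality. The elastic Fourier multiplier $1-(e\cdot\xi)^2/|\xi|^2$ vanishes on the one-dimensional line $\R e$, so any naive radial frequency splitting only reproduces the isoperimetric inequality and cannot detect the elastic contribution; one must combine a radial scale with an angular scale that concentrates near $\R e$. This is precisely the multi-scale argument carried out in \cite{KK} in the presence of a linear gauge symmetry, and the substantive point here is to verify that it transfers \emph{mutatis mutandis} to the present discrete, gauge-free setting, which is immediate once one observes that the rank-one reduction to the scalar Fourier multiplier and both Fourier bounds on $\widehat{\mathbf{1}_\Omega}$ are insensitive to the absence of gauge invariances.
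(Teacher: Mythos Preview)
Your lower bound argument has a genuine gap. The Fourier cone splitting you describe, using only the pointwise bounds $|\widehat{\mathbf 1_\Omega}(\xi)|\le V$ and $|\widehat{\mathbf 1_\Omega}(\xi)|\le E_{surf}(\chi)/|\xi|$ on the cone around $\R e$, cannot establish the interpolation $V^2\lesssim E_{el}(\chi)\,E_{surf}(\chi)^{n/(n-1)}$: with these pointwise bounds alone the integral of $|\widehat{\mathbf 1_\Omega}|^2$ over the cone diverges at high frequencies in every dimension $n\ge 2$. If instead you control the high-frequency part by the correct $L^2$-type estimate (as in Lemma~\ref{lem:control-cones1}), the splitting reads
\[
V \ \lesssim\ \mu^{n-1}\mu_2^{\,n}\,V^2 \;+\; \mu^{-2}E_{el}(\chi) \;+\; \mu_2^{-1}E_{surf}(\chi),
\]
and optimising in $\mu,\mu_2$ only yields $V^{3}\lesssim E_{el}(\chi)\,E_{surf}(\chi)^{2n/(n-1)}$, hence $E_\epsilon(V)\gtrsim \epsilon^{2n/(3n-1)}V^{(3n-3)/(3n-1)}$. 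This is precisely the scaling of Corollary~\ref{cor:lamination_1} for the $2{+}1$-well problem, \emph{not} the stronger exponent $\epsilon^{n/(2n-1)}V^{(2n-2)/(2n-1)}$ required here. The point is structural: the cone argument sees only the multiplier $1-(e\cdot\xi)^2/|\xi|^2$ and $L^2$-type norms of $\chi$, and therefore cannot distinguish the single-signed indicator $\chi=A\mathbf 1_\Omega$ from a phase indicator that is allowed to oscillate internally between two rank-one connected wells.

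The paper (following \cite{KK}) obtains the sharp lower bound by a completely different mechanism: a \emph{real-space} localised estimate showing that on any ball where the minority phase and its perimeter are small, the elastic energy controls $R^{-n}\|\chi\|_{L^1}^2$, proved by integrating $\partial_1 u_1$ along the rank-one direction and using that $\chi_{1,1}$ has a \emph{sign}; this is then globalised by a covering argument. It is exactly the sign-definiteness of $\chi$, invisible to your Fourier splitting, that produces the extra rigidity. As a secondary remark, your upper-bound ansatz with $\phi$ ``saturated outside'' gives $\partial_j u = a\,\phi\,\partial_j\psi \notin L^2(\R^n)$ for $j\neq 1$, so $u\notin \dot H^1$; one has to let $\phi$ decay, which reintroduces an outer elastic cost and does not literally produce the clean $h^2L^{n-2}$ you claim.
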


We refer to the forthcoming work \cite{AKKR22} for such a result within the geometrically nonlinear theory of elasticity having full $SO(2)$ invariance.

Turning to the setting of $\# K = 2$ and imposing that $\textbf{0}$ is rank-one connected with the variants of martensite, we may without loss of generality assume that 
\begin{align}
\label{eq:two}
K=\{A,B\},
\quad \text{with} \quad
A= \begin{pmatrix} -\lambda & 0 \\ 0 & 0\end{pmatrix},\,
B= \begin{pmatrix} 1-\lambda & 0 \\ 0 & 0 \end{pmatrix}
\end{align}
for some $\lambda\in(0,1)$ fixed, with ``austenite" given by the zero matrix. In this case $\textbf{0} \in K^{(1)}$, which denotes the first-order lamination-convex hull (and coincides with $\conv(K)$ in this case). In contrast to the previous case in which $\#K =1$, in the nucleation process it is now possible for the nucleating phase to form an internal microstructure lowering the elastic energy. This leads to an overall improved energy scaling behaviour in this setting for large volume fractions:

\begin{thm}
\label{thm:2wells}
Let $E_{\epsilon}(V)$ be as in \eqref{eq:en-vol} and let $K$ be as in \eqref{eq:two}. 
Then, there exist two positive constants $C_2>C_1>0$ depending on $K$ such that for every $V>0$ and for every $\epsilon>0$ there holds
\begin{align*}
C_1 r_\epsilon(V) \le E_\epsilon(V) \le C_2 r_\epsilon(V),
\quad \text{where} \quad
r_\epsilon(V)=\begin{cases}
\epsilon V^\frac{1}{2} & V\le\epsilon^2, \\
\epsilon^\frac{4}{5}V^\frac{3}{5} & V>\epsilon^2.
\end{cases}
\end{align*}
\end{thm}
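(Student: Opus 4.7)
The two regimes reflect two different mechanisms. For $V\le\epsilon^2$ the perimeter controls the energy and both bounds are essentially isoperimetric; for $V>\epsilon^2$ the nucleus can reduce its elastic cost by exploiting the identity $\textbf{0}=(1-\lambda)A+\lambda B$ together with the rank-one connection $A-B=-e_1\otimes e_1$ to form an internal first-order laminate, and the improved exponent $\epsilon^{4/5}V^{3/5}$ emerges from a Kohn--Müller branched construction balanced against the outer perimeter of a thin rectangular inclusion.

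\textit{Upper bound.} For $V\le\epsilon^2$ one takes $\chi=A\cdot\mathbf{1}_Q$ with $Q$ a square of area $V$; the surface term is $\lesssim\epsilon V^{1/2}$, and an explicit Fourier calculation gives $E_{el}\lesssim V\le\epsilon V^{1/2}$. For $V>\epsilon^2$ one fills a thin rectangle $\Omega=(0,L)\times(0,H)$ with $LH=V$ and $L\ll H$ with a laminate of $A$ and $B$ in the compatible proportions $(1-\lambda):\lambda$ and normal $e_1$, so that the stripes run along the long side $H$. Near the two short edges $x_2=0,H$ the stripe widths are refined self-similarly by successive bisections $w_k=2^{-k}w_0$ on regions of height $h_k\sim w_k^2/\epsilon$, from the interior scale $w_0\sim(\epsilon H)^{1/2}$ down to the diffuse scale $w_K\sim\epsilon$. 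A direct energy audit yields
\begin{equation*}
E_\epsilon(\chi)\lesssim\epsilon^{2/3}LH^{1/3}+\epsilon(L+H),
\end{equation*}
and optimizing the aspect ratio at fixed $V=LH$ gives $H\sim\epsilon^{-1/5}V^{3/5}$, $L\sim\epsilon^{1/5}V^{2/5}$, hence $E_\epsilon(\chi)\lesssim\epsilon^{4/5}V^{3/5}$.

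\textit{Lower bound.} Since every matrix of $K$ is supported in the $(1,1)$-entry, $\chi=\diag(\varphi,0)$ with $\varphi:=\chi_{1,1}\in\{0,-\lambda,1-\lambda\}$. Minimizing the elastic functional in $u\in H^1(\R^2;\R^2)$ and applying Plancherel yields the identity
\begin{equation}\label{eq:el-fourier-proof}
E_{el}(\chi)=c_0\int_{\R^2}\frac{\xi_2^2}{|\xi|^2}\,|\widehat\varphi(\xi)|^2\,d\xi,
\end{equation}
together with $\|\varphi\|_{L^2}^2\ge c_\lambda V$ and $|D\varphi|(\R^2)\le C\,E_{surf}(\chi)$. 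For $V\le\epsilon^2$ the planar isoperimetric inequality on $\supp(\chi)$ gives $E_{surf}\ge cV^{1/2}$, whence $E_\epsilon\ge c\epsilon V^{1/2}$. For $V>\epsilon^2$ we rewrite
\[
c_\lambda V\le\|\varphi\|_{L^2}^2=\tfrac{1}{c_0}E_{el}(\chi)+\int_{\R^2}\frac{\xi_1^2}{|\xi|^2}|\widehat\varphi|^2\,d\xi
\]
and estimate the remaining integral by partitioning Fourier space, via two thresholds $\rho_1,\rho_2>0$, into three zones: a low-frequency box $\{|\xi_1|\le\rho_1,\,|\xi_2|\le\rho_2\}$, where $|\widehat\varphi|\le\|\varphi\|_{L^1}\le CV$; a vertical zone $\{|\xi_2|>\rho_2\}$, subdivided at $|\xi_1|\sim|\xi_2|$ so that one part is controlled by \eqref{eq:el-fourier-proof} and the other by the BV bound $|\widehat\varphi|\le|D\varphi|/|\xi_1|$; and a horizontal zone $\{|\xi_1|>\rho_1,\,|\xi_2|\le\rho_2\}$, on which we interpolate the BV bound with the $L^\infty$ bound at the critical scale $|\xi_1|\sim|D\varphi|/V$. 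Equalizing the three contributions by the choice of $\rho_1,\rho_2$ produces $E_\epsilon\ge c\,\epsilon^{4/5}V^{3/5}$.

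\textit{Main obstacle.} The decisive step is the estimate in the horizontal zone. Neither the BV estimate $|\widehat\varphi|\le|D\varphi|/|\xi_1|$ nor the $L^\infty$ estimate $|\widehat\varphi|\le CV$ suffices on its own: the former produces a logarithmically divergent 2D tail integral, the latter is too weak at high frequencies. The correct control is obtained by splitting the horizontal zone at the sharp interpolation scale $|\xi_1|\sim|D\varphi|/V$, using the $L^\infty$ bound below it and the BV bound above it. This interpolation, together with the balance between the three zones, selects precisely the exponents $\tfrac45$ in $\epsilon$ and $\tfrac35$ in $V$ of the theorem.
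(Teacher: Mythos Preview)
Your upper bound is fine and matches the paper: the branched laminate in a thin rectangle is exactly the construction the paper gives in its appendix (the main text uses an even simpler two–region diamond that achieves the same scaling without any branching, exploiting the freedom in the domain shape). Your small-volume regime is also handled the same way.

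The lower bound, however, has a genuine gap. In the vertical zone $\{|\xi_2|>\rho_2\}$, on the sub-region $\{|\xi_1|>|\xi_2|\}$ the pointwise bound $|\widehat\varphi|\le |D\varphi|/|\xi_1|$ produces
\[
\int_{|\xi_1|>|\xi_2|>\rho_2}\frac{|D\varphi|^2}{|\xi|^2}\,d\xi
\;\gtrsim\; |D\varphi|^2\int_{\rho_2}^{\infty}\frac{d\xi_2}{\xi_2}=\infty,
\]
so that part is not controlled as stated. The interpolation at $|\xi_1|\sim |D\varphi|/V$ that you correctly identify as the key device in the \emph{horizontal} strip does not help here, because the $\xi_2$–width of this region grows like $|\xi_1|$ rather than being fixed at $\rho_2$. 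If instead you patch this by the integrated high–frequency estimate $\int_{|\xi|>\rho_2}|\widehat\varphi|^2\lesssim\rho_2^{-1}|D\varphi|$, your three–zone balance collapses: sending $\rho_1\to 0$ costs nothing, and optimizing in $\rho_2$ alone gives only
\[
V\lesssim E_{el}+|D\varphi|\,V^{1/2},
\]
hence $\hat E\gtrsim V^{1/2}$, i.e.\ no improvement over isoperimetry.

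What is missing is an \emph{angular} parameter. The elastic symbol $\xi_2^2/|\xi|^2$ controls not just the fixed sector $\{|\xi_1|\le|\xi_2|\}$ but any cone $\{|\xi_2|\ge\mu|\xi|\}$, at the price of a factor $\mu^{-2}$. The paper exploits exactly this: it splits $\R^2$ into the truncated narrow cone $C_{1,\mu,\mu_2}=\{|\xi_2|\le\mu|\xi|,\ |\xi|\le\mu_2\}$ and its complement, obtaining
\[
V\;\lesssim\;\mu\,\mu_2^{2}\,V^{2}\;+\;(\mu^{-2}+\mu_2^{-1})\,\hat E(\chi),
\]
where the first term is the $L^1$–$L^\infty$ bound on $C_{1,\mu,\mu_2}$, the $\mu^{-2}$ term comes from the elastic cone, and the $\mu_2^{-1}$ term from the integrated high–frequency (surface) control. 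Optimizing first in $\mu$ (to absorb the $V^2$ term) and then in $\mu_2$ yields $\mu_2\sim V^{-2/5}$ and $\hat E\gtrsim V^{3/5}$. Your rectangular box plays the role of $C_{1,\mu,\mu_2}$ only if you let its aspect ratio $\rho_2/\rho_1$ act as $\mu$ and simultaneously use $\rho_1$ as the radial cutoff; fixing the elastic sector at aperture $\sim 1$ is precisely what loses the exponent.
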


Moreover, it is possible to generalize this to arbitrary dimensions, picking up a dimensional dependence:

\begin{cor}
\label{cor:lamination_1}
Let $E_{\epsilon}(V)$ be as in \eqref{eq:en-vol} and let $K=\{-\lambda e_1\otimes e_1, (1-\lambda)e_1\otimes e_1\}$ with $\lambda \in (0,1)$ and $e_1$ denoting the first, normalized canonical basis vector.
Then there exist two positive constants $C_2>C_1>0$ depending on $K$ and $n$ such that for every $V>0$ and for every $\epsilon>0$ there holds
\begin{align*}
C_1 r_\epsilon(V) \le E_\epsilon(V) \le C_2 r_\epsilon(V),
\quad \text{where} \quad
r_\epsilon(V)=\begin{cases}
\epsilon V^\frac{n-1}{n} & V\le\epsilon^n, \\
\epsilon^\frac{2n}{3n-1}V^\frac{3n-3}{3n-1} & V>\epsilon^n.
\end{cases}
\end{align*}
\end{cor}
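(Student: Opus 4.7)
The plan is to generalize both the upper and lower bounds of Theorem \ref{thm:2wells} to arbitrary dimension $n \ge 2$, exploiting that the structural information of $K$ is concentrated along the single rank-one direction $e_1 \otimes e_1$. In the small-volume regime $V \le \epsilon^n$, the upper bound follows immediately from Theorem \ref{thm:KK}: restricting $\chi$ to take only the value $A = -\lambda e_1 \otimes e_1$ on an optimally shaped inclusion yields an admissible competitor, since $\{A\} \subset K_0$, and achieves energy $\lesssim \epsilon V^{(n-1)/n}$. Correspondingly, the matching lower bound in this regime is provided by the $n$-dimensional isoperimetric inequality applied to $\supp \chi$, giving $E_\epsilon(V) \ge c \epsilon V^{(n-1)/n}$.

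For the large-volume upper bound ($V > \epsilon^n$), I would construct an $n$-dimensional analogue of the branched laminate used in the proof of Theorem \ref{thm:2wells}. The nucleus is an elongated box $\Omega = [-L_1/2, L_1/2] \times [-L/2, L/2]^{n-1}$ with aspect ratio $L_1/L$ to be optimised. Inside $\Omega$ I would place a rank-one lamination of $A$ and $B$ in the $e_1$ direction, branched towards the transverse faces (as in Kohn--M\"uller) so as to reduce the sawtooth amplitude of the displacement where it meets the surrounding austenite. The displacement $u$ is the natural $n$-dimensional analogue of the 2D one, multiplied by smooth cutoffs in the extrusion directions to ensure $u \in H^1(\R^n;\R^n)$. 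Summing the interior laminate surface energy, the elastic contributions from the $2(n-1)$ transverse boundary layers, and the perimeter energy of $\Omega$ ($\sim \epsilon(L^{n-1} + L_1 L^{n-2})$), and then optimising over $L_1/L$, the laminate period and the branching depth, should yield $E_\epsilon(V) \le C \epsilon^{2n/(3n-1)} V^{(3n-3)/(3n-1)}$.

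For the large-volume lower bound, I would adapt the Fourier-based argument from the proof of Theorem \ref{thm:2wells}. Since $K \subset \spa\{e_1 \otimes e_1\}$, the elastic energy contains a rigid rank-one structure along $e_1$ that can be isolated via a Fourier analysis in $x_1$; combining this with transverse integration in $(x_2,\ldots,x_n)$ and with the perimeter control provided by the surface energy should yield the claimed bound $E_\epsilon(V) \ge c \epsilon^{2n/(3n-1)} V^{(3n-3)/(3n-1)}$.

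The main technical challenge is the large-volume lower bound in $n$ dimensions. While the 2D Fourier argument serves as the blueprint, its $n$-dimensional extension requires careful handling of the transverse directions: I would need to exploit the structural property that $\chi$ acts only in the $e_1 \otimes e_1$ direction to decouple the longitudinal rigidity (giving the rank-one control via Fourier analysis in $x_1$) from the $n-1$ transverse directions, which enter the estimate multiplicatively. Balancing these contributions with the perimeter control through interpolation then produces the sharp exponents $2n/(3n-1)$ and $(3n-3)/(3n-1)$.
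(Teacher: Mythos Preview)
Your plan is correct and matches the paper's argument closely: the small-volume regime is handled exactly as you describe, and for the large-volume lower bound the paper also retraces the two-dimensional Fourier estimate from Proposition~\ref{prop:2wells-lb}, now with the $n$-dimensional cone volume $\mu^{n-1}\mu_2^n$ in Lemma~\ref{lem:low_freq}, which after the same two-step optimisation in $\mu,\mu_2$ gives the exponent $\tfrac{3n-3}{3n-1}$.

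The one noteworthy difference is in the large-volume upper bound. You propose a Kohn--M\"uller-type branched laminate in a box; the paper does give such a construction in the appendix (extruding the two-dimensional branching via $\rho(x_2,\dots,x_n)=\max_j|x_j-\tfrac{H}{2}|+\tfrac{H}{2}$), but its main-text proof is considerably simpler: a single piecewise-affine map on the diamond $\Omega=\conv\{\pm\tfrac{L}{2}e_1,\,\pm\tfrac{H}{2}e_j:j\ge2\}$ with $u_1(x)=|x_1|+\tfrac{L}{H}\sum_{j\ge2}|x_j|-\tfrac{L}{2}$ and $u_j\equiv0$ for $j\ge2$. This construction has no internal microstructure or branching at all; the optimisation $L\sim H^{2/3}$ already yields the sharp exponent. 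So while your branched-box approach would work and is what the appendix does, the lens/diamond construction buys you the same scaling with far less effort.
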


\subsubsection{A second order laminate in two and three dimensions}
\label{sec:intro_second_order}

Considering nucleation settings in which the parent phase is a \emph{second order laminate}, the elastic energy plays a stronger role in the resulting scaling law. Indeed, we expect that more microstructure is necessary in order to obey self-accommodation of the nucleus on the one hand and to ensure compatibility of the phases on the other hand. Seeking to study these effects, we consider the following set
$K=\{A_1,A_2,A_3,A_4\}$ with
\begin{align}
\label{eq:4_wells}
A_1= \begin{pmatrix} -1 & 0 \\ 0 & -2\end{pmatrix},\,
A_2= \begin{pmatrix} -1 & 0 \\ 0 & 1 \end{pmatrix},\,
A_3= \begin{pmatrix} 1 & 0 \\ 0 & 2 \end{pmatrix},\,
A_4= \begin{pmatrix} 1 & 0 \\ 0 & -1 \end{pmatrix}.
\end{align}
In this setting, we have first and second order laminates given by the following formulas:
$K^{(1)}:= \conv\{A_1,A_2\} \cup\conv\{A_3,A_4\}$ and 
\begin{align*}
K^{(2)}\setminus K^{(1)} = \left\{
\begin{pmatrix}
\mu & 0 \\ 0 & \nu
\end{pmatrix} \,:\, |\mu|<1,|\nu|\le 1\right\},
\end{align*}
see Figure \ref{fig:4wells}.

\begin{figure}[thb]
\includegraphics{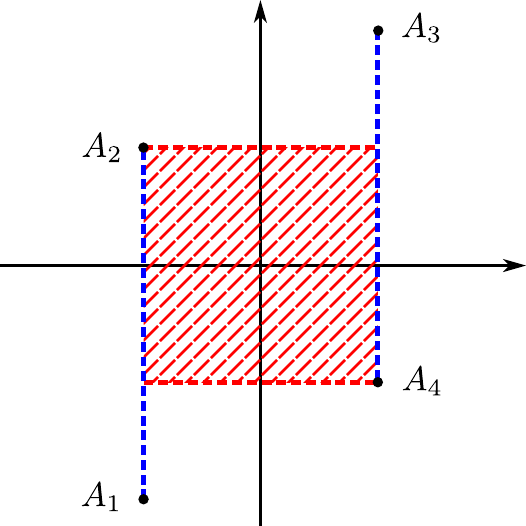}
\caption{The matrix set from Section \ref{sec:intro_second_order}: In blue the set $K^{(1)}\setminus K$, in red $K^{(2)}\setminus K^{(1)}$.}
\label{fig:4wells}
\end{figure}

In particular, the zero matrix belongs to the lamination-convex hull $K^{lc}$ and can be obtained as
$$
\frac{1}{2}\Big(\frac{1}{3}A_1+\frac{2}{3}A_2\Big)+\frac{1}{2}\Big(\frac{1}{3}A_3+\frac{2}{3}A_4\Big)=\textbf{0},
$$
that is the zero matrix is a second order laminate of $K$.

In this setting, it turns out that indeed, the nucleation process is more expensive for large nucleation cores than in the setting of Theorem \ref{thm:2wells}:

\begin{thm}
\label{thm:second_intro}
Let $E_{\epsilon}(V)$ be as in \eqref{eq:en-vol} and let $K$ be given by the matrices in \eqref{eq:4_wells}.
Then, there exist two positive constants $C_2>C_1>0$ depending on $K$ such that for every $V>0$ and for every $\epsilon>0$ there holds
\begin{align*}
C_1 r_\epsilon(V) \le E_\epsilon(V) \le C_2 r_\epsilon(V),
\quad \text{where} \quad
r_\epsilon(V)=\begin{cases}
\epsilon V^\frac{1}{2} & \text{if } V\le\epsilon^2, \\
\epsilon^\frac{4}{7}V^\frac{5}{7} & \text{if } V>\epsilon^2.
\end{cases}
\end{align*}
\end{thm}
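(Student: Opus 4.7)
The strategy is to prove matching upper and lower bounds separately in the regimes $V\le\epsilon^2$ and $V>\epsilon^2$. In the small-volume regime the scaling $\epsilon V^{1/2}$ is already enforced by the surface term alone: for the upper bound I would take $\chi=A_1\mathbf{1}_Q$ on a square $Q$ of area $V$ and complete it with a $u$ that vanishes outside $Q$ and has $\nabla u=A_1$ on a smaller coaxial square, with a linear transition in between, giving $E_{el}\lesssim V$ and $E_{surf}\lesssim V^{1/2}$ so that $E_\epsilon\lesssim\epsilon V^{1/2}$ once $V\le\epsilon^2$; for the matching lower bound I would combine $E_\epsilon\ge\epsilon|D\chi|(\R^2)$ with the relative isoperimetric inequality applied to $\supp\chi$ (using that $A_i\ne\mathbf{0}$).

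In the large-volume regime the upper bound requires a nested two-scale branched construction inside a rectangle $R\subset\R^2$ of area comparable to $V$. Its form mirrors the decomposition of $\mathbf{0}$ as a second-order laminate from the introduction: an outer lamination in direction $e_1$ at scale $h_1$ alternates regions where $\chi\in\conv\{A_1,A_2\}$ and $\chi\in\conv\{A_3,A_4\}$ with equal volume fractions (the outer averages $\diag(\mp 1,0)$ are rank-one connected along $e_1$ and average to $\mathbf{0}$), and within each outer strip an inner lamination in direction $e_2$ at scale $h_2$ alternates the two wells with volume fractions $\tfrac13,\tfrac23$. Along the two sides of $R$ orthogonal to $e_1$ the outer scale is refined by Kohn-Müller-type branching, and along the sides orthogonal to $e_2$ the inner scale is refined similarly. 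Summing the resulting elastic boundary-layer contributions with the surface and branching contributions and optimizing $h_1$, $h_2$ and the aspect ratio of $R$ should yield $E_\epsilon\lesssim\epsilon^{4/7}V^{5/7}$.

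For the lower bound in the large-volume regime I would use a Fourier argument. Writing $\chi=\sum_{i=1}^4 A_i\chi_i$ with indicators $\chi_i\in BV(\R^2;\{0,1\})$ and computing $\inf_u\int|\nabla u-\chi|^2$ by Plancherel via the projection onto rank-one Fourier symbols $v\otimes\xi/|\xi|^2$, one gets the exact identity
\[
E_{el}(\chi)=\int_{\R^2}\frac{\xi_2^2}{|\xi|^2}\bigl|\widehat{\chi_{1,1}}(\xi)\bigr|^2\,d\xi+\int_{\R^2}\frac{\xi_1^2}{|\xi|^2}\bigl|\widehat{\chi_{2,2}}(\xi)\bigr|^2\,d\xi.
\]
Now $\chi_{1,1}=(\chi_3+\chi_4)-(\chi_1+\chi_2)$ encodes the outer mixing between $\conv\{A_1,A_2\}$ and $\conv\{A_3,A_4\}$, while $\chi_{2,2}=-2\chi_1+\chi_2+2\chi_3-\chi_4$ encodes the inner mixing within each outer component; so the two terms above capture precisely the two lamination levels. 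Combining these weighted negative-Sobolev controls with the $BV$-bound from $\epsilon E_{surf}$ and the volume constraint $V=\sum_i|\supp\chi_i|$ through two successive interpolation inequalities of Gagliardo-Nirenberg type should produce the matching lower bound $E_\epsilon\gtrsim\epsilon^{4/7}V^{5/7}$.

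The main obstacle is the correct coupling of the two rank-one constraints in the lower bound: using only one of them would recover merely the first-order scaling $\epsilon^{4/5}V^{3/5}$ of Theorem \ref{thm:2wells}, so the improvement truly hinges on an intrinsically two-level argument. In particular, the two controlled quantities live in different anisotropic frequency regions (one weighted by $\xi_2$, the other by $\xi_1$), and their contributions must be balanced against the surface term and the volume constraint through two nested interpolations whose exponents are what ultimately produce the arithmetic $4/7,5/7$. On the construction side, a parallel difficulty is to implement the outer and the inner branching so that their refinement regions do not interact near the corners of $R$ and no uncontrolled elastic cost is incurred there.
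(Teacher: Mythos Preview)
Your small-volume argument and your upper-bound strategy are essentially what the paper does (the paper gives both a lens-shaped construction and, in the appendix, a rectangular double-branching construction matching your outline). The gap is in the large-volume lower bound. You correctly write the elastic energy as the sum of the two anisotropic Fourier terms and you correctly observe that using only one of them reproduces the first-order exponent $3/5$. But the coupling mechanism you propose --- ``two successive interpolation inequalities of Gagliardo--Nirenberg type'' --- is not the right one, and as stated it does not work: the two elastic controls act on \emph{different} scalar functions ($\chi_{1,1}$ weighted by $\xi_2^2/|\xi|^2$ and $\chi_{2,2}$ weighted by $\xi_1^2/|\xi|^2$), and linear interpolation between an anisotropic negative-Sobolev control and a $BV$ control of either one separately can only buy you one level of frequency localization, hence only the $3/5$ exponent. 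Nothing in a pure interpolation scheme transfers the $e_1$-directional control on $\chi_{2,2}$ into additional control on $\chi_{1,1}$.

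The missing idea is the pointwise \emph{nonlinear} relation between the two diagonal entries. For this well set one has $\chi_{1,1}=g(\chi_{2,2})$ for a cubic polynomial $g$ with $g(0)=0$ (e.g.\ $g(t)=\tfrac12 t^3-\tfrac32 t$), valid globally including outside the inclusion. The paper exploits this via a commutator (bootstrap) argument: after a first localization of both $\chi_{1,1}$ and $\chi_{2,2}$ to cones $C_{j,\mu,\mu_2}$ using elastic and surface energy, the identity $\chi_{1,1}=g(\chi_{2,2})$ together with the fact that products in physical space become convolutions in frequency space forces the Fourier mass of $\chi_{1,1}$ into the much smaller cone $C_{1,\mu,\mu_3}$ with $\mu_3\sim\mu\mu_2$. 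It is this second, strictly smaller localization scale that, after optimizing $\mu$ and $\mu_2$ against the low-frequency $L^1$--$L^\infty$ bound $\int_{C_{1,\mu,\mu_3}}|\hat\chi_{1,1}|^2\lesssim \mu\mu_3^2 V^2$, produces the arithmetic $5/7$. Without invoking this algebraic determination of one entry by the other, no purely linear interpolation can bridge the two lamination levels.
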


We remark that also for this setting, higher dimensional analoga can be obtained. Indeed, we refer to Section \ref{sec:3D} (Proposition \ref{prop:3D_analogue}) in which a lower scaling bound for a second order laminate parent phase in three dimensions is investigated. For this simple model problem we in particular recover the lower scaling bound which had earlier been derived for the cubic-to-tetragonal phase transition in the geometrically linearized theory of elasticity in \cite{KKO13} (with gauge group $Skew(3)$). As a consequence, in spite of our substantial simplification in ignoring gauges, these models may capture some of the mathematical features of physically more realistic models and can mathematically thus be regarded as interesting, simpler substitutes for these which may allow for a more detailed analysis.

\subsubsection{A three-dimensional third order laminate}
\label{intro:third-order}

In addition to the two-dimensional settings, we also consider a three-dimensional problem for which the zero matrix is a third order laminate. To this end, we consider the set of wells given by
\begin{align}
\label{eq:three_dim_3rd}
\begin{split}
K &= \left\{ 
\begin{pmatrix}
2 & 0 & 0 \\ 
0 & -2 & 0\\
0 & 0 & 1
\end{pmatrix},
\begin{pmatrix}
-2 & 0 & 0 \\ 
0 & -2 & 0\\
0 & 0 & 1
\end{pmatrix},
\begin{pmatrix}
-3 & 0 & 0 \\ 
0 & 2 & 0\\
0 & 0 & 1
\end{pmatrix},
\begin{pmatrix}
3 & 0 & 0 \\ 
0 & 2 & 0\\
0 & 0 & 1
\end{pmatrix},\right.\\
& \quad \left.\begin{pmatrix}
1 & 0 & 0 \\ 
0 & -1 & 0\\
0 & 0 & -1
\end{pmatrix},
\begin{pmatrix}
-1 & 0 & 0 \\ 
0 & -1 & 0\\
0 & 0 & -1
\end{pmatrix},
\begin{pmatrix}
-4 & 0 & 0 \\ 
0 & 1 & 0\\
0 & 0 & -1
\end{pmatrix},
\begin{pmatrix}
4 & 0 & 0 \\ 
0 & 1 & 0\\
0 & 0 & -1
\end{pmatrix}
\right\},
\end{split}
\end{align}
see also Figure \ref{fig:4wells-3D}.
In terms of the characteristic functions the associated stress-free inclusion turns into
\begin{align*}
\nabla u \in \begin{pmatrix} \chi_{1,1} & 0 & 0\\
0 & \chi_{2,2} & 0 \\
0 & 0 & \chi_{3,3}
 \end{pmatrix},
\end{align*}
with
\begin{align*}
\chi_{1,1} & =2 \chi_1 - 2\chi_2 - 3\chi_3+ 3 \chi_4 + \chi_5- \chi_6-4\chi_7 + 4\chi_8,\\
\chi_{2,2} &= -2\chi_1 - 2\chi_2 + 2\chi_3+ 2\chi_4 - \chi_5-\chi_6+\chi_7 +\chi_8,\\
\chi_{3,3} &= \chi_1 + \chi_2 + \chi_3 + \chi_4-\chi_5-\chi_6 - \chi_7 - \chi_8.
\end{align*}

\begin{figure}[t]
\includegraphics{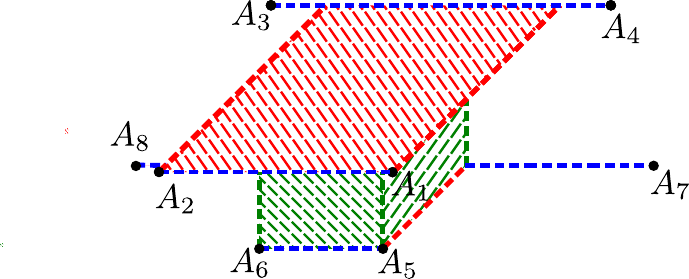}
\caption{The matrix set from Section \ref{intro:third-order}: In blue the set $K^{(1)}\setminus K$, in red $K^{(2)}\setminus K^{(1)}$, in green $K^{(3)}\setminus K^{(2)}$.}
\label{fig:4wells-3D}
\end{figure}

This now contains boundary data with lamination orders up to order three. In particular, the zero matrix has lamination order three. In this setting, we consider lower scaling bounds and show that again these are determined by the order of lamination:

\begin{prop}[Lower bounds for third order laminates in three dimensions]
\label{prop:lower_3}
Let $E_{\epsilon}(V)$ be as in \eqref{eq:en-vol} and let $K$ be as \eqref{eq:three_dim_3rd}.
Then, there exists a positive constant $C>0$ depending on $K$ such that for every $V>0$ and for every $\epsilon>0$ it holds
\begin{align*}
E_{\epsilon}(V) \geq C r_{\epsilon}(V), \mbox{ where } 
r_{\epsilon}(V)= 
\left\{
\begin{array}{ll} 
\epsilon V^{\frac{2}{3}}, & \ V \leq \epsilon^{3},\\
\epsilon^{\frac{3}{7}} V^{\frac{6}{7}}, & \ V > \epsilon^{3}.
\end{array} \right.
\end{align*}
\end{prop}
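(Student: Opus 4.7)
My plan is to adapt the Fourier-based strategy underlying the lower bounds for the lower-order settings in this article (Theorem \ref{thm:2wells} and Theorem \ref{thm:second_intro}) to a three-scale argument matching the third-order lamination of $\mathbf{0}\in K^{lc}$ for the set in \eqref{eq:three_dim_3rd}. The proof naturally splits at $V\sim\epsilon^3$. In the regime $V\leq\epsilon^3$ the argument is purely geometric: since every matrix in $K$ is separated from $\mathbf{0}$, the jump of $\chi$ across $\partial\supp\chi$ is bounded away from zero, and the standard $3$D isoperimetric inequality gives
\[
E_\epsilon(\chi)\,\geq\,\epsilon\,|D\chi|(\R^3)\,\gtrsim\,\epsilon\,V^{2/3},
\]
which matches the stated bound in this regime.

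For $V>\epsilon^3$ I would first minimise the elastic energy in $u$ on the Fourier side. The Euler--Lagrange equation $\Delta u=\mathrm{div}\,\chi$ together with the diagonality of $\chi$ yields, after Plancherel, the exact representation
\[
E_{el}(\chi)\,=\,\sum_{i=1}^{3}\int_{\R^3}|\widehat{\chi_{i,i}}(\xi)|^2\,\frac{|\xi|^2-\xi_i^2}{|\xi|^2}\,d\xi,
\]
so that $\widehat{\chi_{i,i}}$ is penalised away from the $e_i$-axis. Next I would exploit the ternary tree structure of \eqref{eq:three_dim_3rd}: the combination $\chi_{3,3}=\sum_{j=1}^{4}\chi_j-\sum_{j=5}^{8}\chi_j$ realises the outer $e_3$-split, $\chi_{2,2}$ restricted to each of the two sub-groups realises the middle $e_2$-split, and the differences $\chi_{2k-1}-\chi_{2k}$ that appear in $\chi_{1,1}$ realise the innermost $e_1$-split. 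Each of these quantities is, up to a known prefactor, a characteristic function supported in (a volume fraction of) $\supp\chi$, so that its $L^2$ norm is bounded below by a multiple of $V^{1/2}$; its Fourier transform obeys the BV-bound $|\widehat{\chi_j}(\xi)|\lesssim E_{surf}/|\xi|$ coming from $\epsilon E_{surf}$; and trivially $|\widehat{\chi_j}(\xi)|\leq V$ from the $L^1$-bound.

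The central step is then a three-scale Fourier decomposition adapted to the three lamination levels. I would introduce three frequency radii and three cone apertures $\theta_1,\theta_2,\theta_3$ around the axes $e_1,e_2,e_3$, and lower-bound $V\lesssim\|\widehat{\chi_{i,i}}\|_{L^2}^2$ by partitioning $\R^3_\xi$ into: (i) an elastic-good region outside the cone around $e_i$, on which $\int|\widehat{\chi_{i,i}}|^2\,d\xi\lesssim E_{el}/\theta_i^2$; (ii) a low-frequency portion of the cone, on which $|\widehat{\chi_{i,i}}|^2\leq V^2$ and whose $3$D volume is $\sim\theta_i^2\rho_i^3$; and (iii) a high-frequency portion of the cone, handled either by the BV-bound from $E_{surf}$ or by the elastic bound on the next outer level of the lamination tree. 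After three such iterations the remaining six-parameter optimisation in $(\rho_i,\theta_i)_{i=1,2,3}$ produces the exponents $V^{6/7}$ and $\epsilon^{3/7}$. The main obstacle is precisely this iterated optimisation: unlike the two-dimensional second-order case, in three dimensions the solid cones have volume $\sim\theta^2\rho^3$ rather than $\sim\theta\rho^2$, and one has to ensure at each level that the residual mass left after the inner interpolation feeds consistently into the next outer one, so that the three nested lamination scales aggregate coherently into the single prefactor $\epsilon^{3/7}$.
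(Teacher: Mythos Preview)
Your overall architecture is right---Fourier representation of $E_{el}$, cone localisation around the axes, low-frequency $L^1$--$L^\infty$ bound, high-frequency BV bound, and a multi-scale optimisation---but the proposal has a genuine gap at the step you yourself flag as the ``main obstacle.'' In part (iii) you say the high-frequency portion of the cone around $e_i$ is ``handled \ldots by the elastic bound on the next outer level of the lamination tree.'' The elastic energy, however, controls a \emph{different} diagonal entry on that outer level: knowing that $\widehat{\chi_{2,2}}$ is small outside the $e_2$-cone tells you nothing, a priori, about $\widehat{\chi_{3,3}}$ inside the $e_3$-cone. Your tree description (``$\chi_{3,3}$ realises the outer split, $\chi_{2,2}$ the middle split, \ldots'') is a statement about how the \emph{values} of the components are organised, not about how their Fourier supports interact; it does not by itself transfer concentration from one axis to another.

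What the paper actually uses is an algebraic device you do not mention: for this well set there are explicit polynomials $f_{1,2},f_{1,3},f_{2,3}$ with $\chi_{2,2}=f_{1,2}(\chi_{1,1})$, $\chi_{3,3}=f_{1,3}(\chi_{1,1})$ and $\chi_{3,3}=f_{2,3}(\chi_{2,2})$ (see Remark~\ref{rmk:poly-ex}). These pointwise identities, combined with the commutator estimate of Lemma~\ref{lem:commutator2}, are exactly what lets one push Fourier localisation of one component onto another: if $\chi_{1,1}$ is concentrated in $C_{1,\mu,\mu_2}$, then $\chi_{2,2}$ and $\chi_{3,3}$ are concentrated in $C_{j,\mu,\mu_3}$ with $\mu_3\sim\mu\mu_2$; a second application (using $\chi_{3,3}=f_{2,3}(\chi_{2,2})$) pushes $\chi_{3,3}$ down to $C_{3,\mu,\mu_4}$ with $\mu_4\sim\mu^2\mu_2$. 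After that the optimisation is only in the two free parameters $\mu,\mu_2$ (not six), and yields $\mu\sim V^{-1/14}$, $\mu_2\sim\mu^{-8/3}V^{-1/3}$, hence $\hat E(V)\gtrsim V^{6/7}$. Without the polynomial relations and the commutator lemma, your step (iii) has no mechanism, and the iterated optimisation you describe cannot be closed.
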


We expect that matching upper bounds could be proved using the three-dimensional construction from \cite[Proposition 6.3]{RT21}. As this however is technically rather involved, we do not provide the matching upper bounds here but postpone this to future work.

\subsubsection{A laminate of infinite order}
\label{sec:intro_tartar}

Last but not least, we study a setting which is ``almost rigid'' in that the zero matrix is an infinite order laminate of the nucleating phases. Here, the set $K$ consists of the matrices forming the Tartar square  \cite{Sch75, AH86,NM91,CT93,T93}, i.e. they are given by $K=\{A_1,A_2,A_3,A_4\} \subset \R^{2\times 2}$ with
\begin{align}
\label{eq:T4_wells}
A_1= \begin{pmatrix} -1 & 0 \\ 0 & -3\end{pmatrix},\,
A_2= \begin{pmatrix} -3 & 0 \\ 0 & 1 \end{pmatrix},\,
A_3= \begin{pmatrix} 1 & 0 \\ 0 & 3 \end{pmatrix},\,
A_4= \begin{pmatrix} 3 & 0 \\ 0 & -1 \end{pmatrix}.
\end{align}

In this setting, very complicated microstructure has to emerge in order to ensure self-accommo-dation and compatibility. Hence, the infinite order of lamination is reflected in a very rigid, high energy scaling law behaviour:

\begin{thm}
\label{thm:Tartar}
Let $E_{\epsilon}(V)$ be as in \eqref{eq:en-vol} and let $K$ consist of the matrices in \eqref{eq:T4_wells}.
Then, there exist four positive constants $C^{(1)},C^{(2)}>0$, $C_2>C_1>0$ depending on $K$ such that for every $V>0$ and for every $\epsilon>0$ there holds
\begin{align*}
C_1 r_\epsilon^{(1)}(V) \le E_\epsilon(V) \le C_2 r_\epsilon^{(2)}(V),
\quad \text{where} \quad
r_\epsilon^{(j)}(V) =\begin{cases}
\epsilon V^\frac{1}{2} & \text{if } V\le\epsilon^2, \\
V\exp\Big(-C^{(j)}\log\Big(\frac{V}{\epsilon^2}\Big)^\frac{1}{2}\Big) & \text{if } V>\epsilon^2.
\end{cases}
\end{align*}
\end{thm}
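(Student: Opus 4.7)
The proof naturally splits into the small-volume regime $V\le \epsilon^2$ and the large-volume regime $V>\epsilon^2$. In the small-volume regime both bounds reduce to a classical isoperimetric argument: for the upper bound one takes $\chi = A_1 \mathbf{1}_{B_r}$ on a disk $B_r$ of volume $V$, completing this into $\chi \in BV(\R^2;K_0)$ and coupling it with an associated $u$ via a boundary layer of width $\sim \min(\epsilon,r)$; this gives $E_\epsilon \lesssim \epsilon V^{1/2}$. For the lower bound, the elastic energy is nonnegative so $E_\epsilon(\chi)\ge \epsilon |D\chi|(\R^2)$, and the BV-isoperimetric inequality yields $|D\chi|(\R^2)\ge c V^{1/2}$ whenever $|\supp(\chi)|=V$.

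For the upper bound in the regime $V>\epsilon^2$, the plan is to build a self-similar branching construction on a square $Q$ of volume $\sim V$ that mimics the infinite-order lamination structure producing $\textbf{0}$ from $K$. Starting from a second-order laminate approximation of $\textbf{0}$ in $Q$, I would iteratively split each subdomain, at scale $h_n$, into finer and finer first-order laminates between the rank-one connected pairs of $K^{(n)}\setminus K^{(n-1)}$, arranged so that after $N$ refinements the microstructure takes values near $K$ away from transition layers. Letting $h_n = h_0 \theta^n$ for a suitable $\theta\in(0,1)$ dictated by the Tartar geometry, the elastic cost at level $n$ decays geometrically while the surface cost scales like $\epsilon V / h_n$; optimizing the sum of these two contributions over $N$ produces the total energy $V\exp(-C^{(2)}\sqrt{\log(V/\epsilon^2)})$. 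The outer boundary layer matching $\chi$ to $\textbf{0}$ outside $Q$ is controlled by an $\epsilon$-width transition, absorbed into the constant. This is essentially a two-dimensional analogue of the construction in \cite[Proposition 6.3]{RT21}.

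For the lower bound in the regime $V>\epsilon^2$, the plan is a Fourier/commutator argument in the spirit of the known lower scaling bounds for the Tartar square. The steps are: (i) use the elastic energy $E_{el}(\chi)\sim \sum_j \int |M_j(\xi)|^2 |\hat\chi_j(\xi)|^2 d\xi$, with explicit diagonal multipliers $M_j$ encoding the projection $\nabla u \mapsto K_0$; (ii) introduce characteristic functions $\chi_j$ of each well and use the exact algebraic identities coming from the diagonal structure of $K$ to write certain low-frequency combinations of the $\chi_j$ in terms of objects controlled by $E_{el}$; (iii) iterate these identities across dyadic length scales, each step consuming a portion of the available elastic energy while producing a fixed loss that one balances against the surface term $\epsilon |D\chi|(\R^2)$; (iv) optimize the number $N$ of scales as $N\sim \sqrt{\log(V/\epsilon^2)}$ to obtain the claimed lower bound. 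Combining this with a small-volume contribution on $\supp(\chi)$ handles all regimes.

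The main obstacle is step (iii) of the lower bound: because no two matrices in $K$ are rank-one connected, each single-scale rigidity estimate must go through \emph{two} auxiliary directions, and the losses accumulated in the iteration must be tracked carefully so that the exponential compounding produces exactly a $\sqrt{\log}$ and not a worse factor. A secondary difficulty on the upper-bound side is to ensure global compatibility of the branching construction with the surrounding austenite while respecting the prescribed volume $V$, and to control cross-level interface lengths so that the total surface energy sums to the optimal geometric series rather than a divergent one.
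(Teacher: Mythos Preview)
Your overall plan matches the paper's: reduce to the one-parameter problem via rescaling, handle $V\le\epsilon^2$ by the isoperimetric inequality, and for $V>\epsilon^2$ combine an iterated-lamination upper bound with a Fourier/commutator lower bound whose number of iterations is optimized to $\sim\sqrt{\log(V/\epsilon^2)}$. The lower-bound sketch is essentially the paper's argument (Lemmas \ref{lem:control-cones1}--\ref{lem:low_freq} iterated via the polynomial relations \eqref{eq:nonlinearT4}), and your identification of the accumulated commutator losses as the delicate point is correct.

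The upper-bound sketch, however, has two genuine problems. First, the phrase ``first-order laminates between the rank-one connected pairs of $K^{(n)}\setminus K^{(n-1)}$'' does not apply to the Tartar square: here $K^{(n)}=K$ for every finite $n$ (no two wells are rank-one connected), and $\textbf{0}$ is not a finite-order laminate at all. The actual construction (as in \cite{W97,C99,RT21}) introduces four auxiliary matrices $J_1,\dots,J_4\notin K$ with $J_i=\frac12 A_i+\frac12 J_{i+1}$ and iterates \emph{this} staircase, leaving at stage $k$ a residual region of volume fraction $\sim 2^{-k}$ whose gradient lies in $\{J_i\}$ rather than in $K$. Second, and more seriously, a \emph{fixed} geometric ratio $h_n=h_0\theta^n$ cannot produce the $\exp(-c\sqrt{\log})$ scaling. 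With a constant ratio the transition-layer elastic cost is $\sim V\sum_j 2^{-j}(h_j/h_{j-1})\sim V\theta$, already of order $V$; and even ignoring that term, balancing $V\,2^{-N}$ against $\epsilon V/(h_0\theta^N)$ yields only a \emph{power} of $\epsilon^2/V$. The mechanism that produces $\sqrt{\log}$ is that the scale ratio must itself depend on the total number $k$ of iterations: in the paper one takes $r_j/r_{j-1}\sim L^{-1/(k+1)}$, so that the contributions $L\,2^{-k}$, $r\sim L^{k/(k+1)}$ and $L^k r^{-k}$ balance exactly when $2^{-k}\sim L^{-1/(k+1)}$, i.e.\ $k\sim\sqrt{\log L}$. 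Without this coupling between the scale ratio and $k$, your optimization over $N$ will not reach the claimed bound.
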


The results from Sections \ref{sec:intro_first_order}-\ref{sec:intro_tartar} illustrate the relevance of the order of lamination of the parent phase with respect to the nucleating phase. We further conjecture that with ideas as the ones outlined in the previous and the following sections, it is possible to produce wells $K\subset \diag(n,\R)$ with nucleation scaling behaviour of the order 
\begin{align*}
C_1 r_\epsilon(V) \le E_\epsilon(V) \le C_2 r_\epsilon(V),
\quad \text{where} \quad
r_\epsilon(V) =\begin{cases}
\epsilon V^\frac{n-1}{n} & \text{if } V\le\epsilon^n, \\
\epsilon^{\frac{2n}{n^2+2n-1}} V^{\frac{n^2+2n-3}{n^2+2n-1}} & \text{if } V>\epsilon^n,
\end{cases}
\end{align*}
 in $n$-dimensional situations for constants $0<C_1\leq C_2$. In particular, this ``interpolates'' between the rather low energy $2+1$ well case in two dimensions and the very rigid, energetically expensive Tartar setting.
In addition to this, let us however also caution that the lamination order of the parent phase with respect to the nucleating phase certainly is \emph{not} the only factor determining the scaling behaviour in nucleation problems. Indeed, considering for instance two-dimensional stair-case laminates, it is possible to create arbitrary high orders of lamination. In this situation it is however not expected that this is necessarily reflected in the nucleation scaling law. We postpone more detailed results on this to future work.

\subsection{Relation to the literature}

The results derived in this article mathematically fall in the class of isoperimetric inequalities \cite{M12} in which anisotropies are present. Due to the presence of the strong anisotropies, for large volumes, balls are in general no longer minimizers of these isoperimetric problems, but interesting microstructure emerges as a competition between the anisotropic nonlocal and the surface energies.
These questions arise naturally in nucleation processes in materials science. In the context of shape-memory alloys, these have, for instance, been studied for the incompatible two-well problem \cite{CM99}, for the cubic-to-tetragonal phase transformation in the geometrically linearized theory of elasticity \cite{KKO13}, for the geometrically linearized two-well problem \cite{KK} and the boundary nucleation for the cubic-to-tetragonal phase transformation \cite{BG15}. The location of nucleation was also studied in \cite{BK16}.
In the absence of self-accommodation bounds have been deduced for the cubic-to-tetragonal phase transformation in \cite{KO19}. Moreover, special constructions and behaviour is known in highly symmetric situations in two dimensions \cite{CKZ17,CDPRZZ20}.

Many of these results and ideas are closely related to singular perturbation problems for shape-memory materials with prescribed boundary conditions (see, for instance, \cite{KM1, KM2,CM99,C99, CDMZ20, CO, CO1, C1, CC14, CC15,CZ16,CDZ17, L01,L06, Rue16b,RT22, RT21}) and the use of scaling as a selection mechanism for wild microstructure \cite{RTZ19, RZZ18}. 
Although in spirit similar to these singular perturbation problems, due to the \emph{flexibility} of the \emph{domain geometry}, in the nucleation setting the problem is less constrained and has more freedom to relax, in part leading to interesting new behaviour.

We emphasize that related questions and results have also been considered in other physical systems such as in compliance minimization \cite{KW14, KW16}, micromagnetism \cite{CKO99} or in models motivated by Coulombic interactions with radially symmetric nonlocal contributions \cite{KM13,KM14}.

\subsection{Outline of the article}
The remainder of the article is structured as follows: After briefly recalling a number of auxiliary results in Section \ref{sec:prelim}, we deal with the scaling laws from Section \ref{sec:intro_first_order}-\ref{sec:intro_tartar} in individual sections splitting the proofs into lower bound estimates and upper bound constructions. Thus, in Section \ref{sec:1+1} we begin by recalling the result from \cite{KK} and illustrating how the method from \cite{KK} yields the behaviour of the 1+1 well case. In Section \ref{sec:2+1} we invoke Fourier tools to deduce improved lower bounds and complement these with matching upper bounds leading to the proof of Theorem \ref{thm:2wells}. Building on these ideas, in Section \ref{sec:second_lam} we discuss second order laminates in two and three dimensions. In three dimensions we in particular recover the lower scaling bound from \cite{KKO13} in our model problem (see Proposition \ref{prop:3D_analogue}). In Section \ref{sec:third_order} we then show that the lower bound estimates are robust and can also be applied to deduce bounds for third order laminates in three dimension which results in the proof of Proposition \ref{prop:lower_3}. Finally, in Section \ref{sec:tartar} we deduce scaling bounds for infinite order laminates for the Tartar square.

\section{Preliminary results}
\label{sec:prelim}

In this section we collect some intermediate results that will be used in the following sections.
Although we mainly treat two- and three-dimensional problems, presenting the results of this section in their general version requires no additional effort.

Here and in what follows, when writing $a\sim b$ we mean that $c^{-1}a\le b\le ca$ for some constant $c>0$ independent of $\epsilon$ and $V$.
Analogously we will write $a\lesssim b$ and $a\gtrsim b$ meaning $a\le cb$ and $a\ge cb$, respectively.

\subsection{Normalization}
\label{rmk:normal}

In considering and estimating our energies which a priori depend on the two parameters $\epsilon$ and $V$, we can always reduce ourselves to a one-parameter problem by a normalization argument.
Let $E_\epsilon$ be as in \eqref{eq:en-tot} and let $u\in H^1(\R^n;\R^n)$, $\chi\in BV(\R^n;K_0)$ be given.
By setting $u_\epsilon(x):=\epsilon^{-1} u(\epsilon x)$, $\chi_\epsilon(x):=\chi(\epsilon x)$, we obtain
$$
E_\epsilon(u,\chi)=\epsilon^n E_1(u_\epsilon,\chi_\epsilon)
\quad \text{and} \quad
E_\epsilon(\chi)=\epsilon^n E_1(\chi_\epsilon).
$$
Indeed, the total variation scales like $\epsilon^{n-1}$, namely $|D\chi|(\R^n)=\epsilon^{n-1}|D\chi_\epsilon|(\R^n)$.
Moreover, $E_{el}(u,\chi)=\epsilon^n E_{el}(u_\epsilon,\chi_\epsilon)$, and therefore $E_{el}(\chi)=\epsilon^n E_{el}(\chi_\epsilon)$, follows by a standard change of variables.

The previous observations justify the following definitions
\begin{equation}\label{eq:en-scaled}
\hat E(u,\chi):= E_1(u,\chi)
\quad \text{and} \quad
\hat E(\chi):= E_1(\chi).
\end{equation}
As a consequence, in what follows we study the scaling behaviour of
\begin{equation}\label{eq:en-vol-scaled}
\hat E(V):=\inf\{\hat E(\chi) \,:\, |\supp(\chi)|=V\}
\end{equation}
for different choices of the set $K$. Undoing the above rescaling, we then obtain the full volume and $\epsilon$-scaling by using the relation
\begin{equation}\label{eq:rescaling}
\hat E(V)=\epsilon^{-n} E_\epsilon(\epsilon^n V).
\end{equation}

\subsection{Small-volume regime}
\label{sec:small-reg}

A common point for all the following choices of $K$ is the behaviour of the scaled energy $\hat E(V)$ when $V\le 1$.

In this case, the lower bound is a consequence of the isoperimetric inequality, that is
$$
\hat E(\chi)\gtrsim \mathcal{H}^{n-1}(\p\,\supp(\chi))\gtrsim V^\frac{n-1}{n}.
$$
For the upper bound we can take e.g. $u$ to be equal to $A x$ in a ball of radius $r$, for some $A\in K$, then matching the zero boundary conditions thanks to a cut-off argument on $B_{2r}\setminus B_r$ with $r=V^\frac{1}{n}$.
Thus, by taking e.g. $\chi=A\chi_{B_{2r}}$,
$$
\hat E(u,\chi)\lesssim V+V^\frac{n-1}{n}\lesssim V^\frac{n-1}{n}
$$
in this regime of $V$.

Hence, there exist two constants $C_2>C_1>0$ depending on $n$ and $K$ such that for every $V\le 1$ we have
\begin{equation}\label{eq:small-vol}
C_1 V^\frac{n-1}{n} \le \hat E(V) \le C_2 V^\frac{n-1}{n}.
\end{equation}
From \eqref{eq:rescaling} we obtain the corresponding scaling law for $E_\epsilon(V)$;
\begin{equation}\label{eq:small-vol-eps}
C_1 \epsilon V^\frac{n-1}{n} \le E_\epsilon(V) \le C_2 \epsilon V^\frac{n-1}{n}
\end{equation}
for every $V\le\epsilon^n$.

\subsection{Elastic energy and Fourier multipliers}

In deducing the lower scaling bounds and in order to effectively explore the effects of anisotropy, in this article we will often work in frequency space.
Therefore, it is convenient to express the elastic energy in terms of the Fourier transform 
\begin{align*}
\hat{\chi}(k):=\F \chi(k):= \int\limits_{\R^n} e^{-i k\cdot x} \chi(x) dx
\end{align*}
of the function $\chi$.
The result below is the analogue of \cite[Lemma 4.1]{RT22} in the case of the full space Fourier transform.

\begin{lem}\label{lem:el-chi-fourier}
Let $E_{el}$ be as in \eqref{eq:el-chi} and $\chi\in BV(\R^n;K_0)$.
Then, there holds
\begin{equation}\label{eq:el-chi-fourier}
E_{el}(\chi)\geq \sum_{j=1}^n \sum_{\ell\neq j}\int_{\R^n}\frac{k_\ell^2}{|k|^2}|\hat\chi_{j,j}|^2dk.
\end{equation}
\end{lem}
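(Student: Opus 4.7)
The plan is to compute (or bound from below) the infimum defining $E_{el}(\chi)$ by passing to the Fourier side and minimizing in $\hat u(k)$ pointwise in $k$, exploiting that $\chi$ is diagonal so that the minimization decouples across the rows.

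First I would apply Plancherel's theorem to rewrite
\begin{equation*}
E_{el}(u,\chi) = \int_{\R^n}|\nabla u-\chi|^2\,dx
= (2\pi)^{-n}\int_{\R^n}\sum_{i,j=1}^n |ik_j\hat u_i(k)-\hat\chi_{i,j}(k)|^2\,dk,
\end{equation*}
using $\widehat{\p_j u_i}(k)=ik_j\hat u_i(k)$. Since $\chi=\diag(\chi_{1,1},\dots,\chi_{n,n})$, the integrand splits as a sum over $i$ of terms depending only on $\hat u_i(k)$ and $\hat\chi_{i,i}(k)$:
\begin{equation*}
\sum_{j=1}^n|ik_j\hat u_i(k)-\delta_{ij}\hat\chi_{i,i}(k)|^2
= |ik_i\hat u_i-\hat\chi_{i,i}|^2 + |\hat u_i|^2\sum_{j\neq i}k_j^2.
\end{equation*}
For each fixed $k\neq 0$ and each $i$, the quantity above is a quadratic in $\hat u_i(k)\in\C$, which I would minimize by a direct calculation (writing $z=\hat u_i(k)$, $a=\hat\chi_{i,i}(k)$ and expanding): the optimum is $z=-ik_i a/|k|^2$ and the minimal value equals $|a|^2\bigl(1-k_i^2/|k|^2\bigr)=|a|^2\sum_{\ell\neq i}k_\ell^2/|k|^2$.

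Summing this lower bound over $i$ and integrating in $k$, then taking the infimum over $u\in H^1(\R^n;\R^n)$, I obtain
\begin{equation*}
E_{el}(\chi) \ge c_n\int_{\R^n}\sum_{i=1}^n\sum_{\ell\neq i}\frac{k_\ell^2}{|k|^2}|\hat\chi_{i,i}(k)|^2\,dk,
\end{equation*}
with a constant $c_n>0$ coming from the Plancherel normalization, which can be absorbed into the implicit constants of the scaling estimates in the sequel. The only technical point worth flagging is that the pointwise optimum $\hat u_i(k)=-ik_i\hat\chi_{i,i}(k)/|k|^2$ defines $\nabla u$ directly in $L^2$ (its frequency multiplier is bounded by $1$), so the inequality is obtained as a genuine lower bound even though we do not necessarily claim the infimum is attained within $H^1$; indeed, for the inequality direction it suffices to estimate the integrand from below pointwise in $k$ before taking the infimum in $\hat u$, which is the step I would carry out to avoid any attainment issue. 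The main obstacle is really only bookkeeping (the normalization constant and the behaviour near $k=0$); the analytical content is just the pointwise scalar quadratic minimization.
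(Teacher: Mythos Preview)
Your proposal is correct and follows essentially the same route as the paper: Plancherel, then pointwise minimization in $\hat u(k)$ (the paper phrases this as solving the Euler-Lagrange equation $(\hat u\otimes ik)k=\hat\chi k$ over $\dot H^1$), yielding the same optimizer $\hat u_j=-ik_j\hat\chi_{j,j}/|k|^2$ and the same lower bound. The only difference is that the paper silently absorbs the Plancherel normalization constant, while you explicitly carry it as $c_n$; otherwise the arguments are identical.
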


\begin{proof}
By Plancherel's theorem, the elastic energy can be expressed as
\begin{equation}\label{eq:lem-el-en}
E_{el}(u,\chi)=\int_{\R^n}|\hat u\otimes ik-\hat\chi|^2dk.
\end{equation}
Using that 
\begin{align*}
\inf_{u\in H^1(\R^n;\R^n)}\int_{\R^n}|\nabla u-\chi|^2dx \geq \inf_{u\in \dot{H}^1(\R^n;\R^n)}\int_{\R^n}|\nabla u-\chi|^2dx,
\end{align*}
we compute the Euler-Lagrange equation associated to the minimum problem
$$
\inf_{u\in \dot{H}^1(\R^n;\R^n)}\int_{\R^n}|\nabla u-\chi|^2dx
$$
on the frequency space, that is
$$
(\hat u\otimes ik)k=\hat\chi k, \quad k\in\R^n.
$$
This is solved by $\hat u_j=-i\frac{k_j}{|k|^2}\hat\chi_{j,j}$ which, inserted in \eqref{eq:lem-el-en}, yields the desired result.
\end{proof}

We next introduce some notation. For $0<\mu<1$ and $\mu'>0$, we define
$$
C_{j,\mu,\mu'}=\Big\{k\in\R^n \,:\, \sum_{\ell\neq j}k_\ell^2\le\mu^2|k|^2,\, |k|\le\mu'\Big\}.
$$
Correspondingly, we define the associated Fourier multipliers $\chi_{j,\mu,\mu'}(D)$, where $\chi_{j,\mu,\mu'}$ are positive $C^\infty(\R^n\setminus\{0\})$ functions such that $\chi_{j,\mu,\mu'}(k)=1$ for $k\in C_{j,\mu,\mu'}$ and $\chi_{j,\mu,\mu'}(k)=0$ for $k\in \R^n\setminus C_{j,2\mu,2\mu'}$, satisfying decay conditions as in Marcinkiewicz's multiplier theorem, see for instance \cite[Corollary 6.2.5]{Grafakos}.

\subsection{Estimates in the frequency space}

The following two results are the corresponding versions of those in \cite[Section 4]{RT22} adapted to the case of the continuous Fourier transform (rather than Fourier series). They encode a first frequency localization using elastic and surface energies (Lemma \ref{lem:control-cones1}) and an iteration of this through a commutator argument (Lemma \ref{lem:commutator2}).
The proofs can be obtained by the ones from \cite{RT22} by just replacing sums with integrals. We therefore omit the repetition of these proofs.

\begin{lem}[Lemma 4.5, \cite{RT22}]\label{lem:control-cones1}
Let $n\ge2$ and $K$ be fixed. Let $\chi \in BV(\R^n, K_0)$, let $E_{el}(\chi)$ be as in \eqref{eq:el-chi} and $E_{surf}(\chi):=|D\chi|(\R^n)$.
Then there exists a constant $C>0$ depending on $n$ and $K$ such that for every $0<\mu<1$, $\mu_2>0$ there holds
$$
\sum_{j=1}^n \|\chi_{j,j}-\chi_{j,\mu,\mu_2}(D)\chi_{j,j}\|_{L^2}^2 \le C\big(\mu^{-2} E_{el}(\chi)+\mu_2^{-1}E_{surf}(\chi)\big).
$$
\end{lem}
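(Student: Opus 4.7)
The plan is to work in frequency space via Plancherel and to split the support of the Fourier multiplier $1-\chi_{j,\mu,\mu_2}$ into its two natural components: an ``off-axis'' region, where $k$ is far from the $j$-th coordinate axis, and a high-frequency region $\{|k|>\mu_2\}$. Since $1-\chi_{j,\mu,\mu_2}$ is uniformly bounded and vanishes on $C_{j,\mu,\mu_2}$, one has the cover
$\R^n\setminus C_{j,\mu,\mu_2}\subset \{k\in\R^n:\sum_{\ell\ne j}k_\ell^2>\mu^2|k|^2\}\cup\{k\in\R^n:|k|>\mu_2\},$
so by Plancherel it is enough to estimate $\int|\hat\chi_{j,j}|^2\,dk$ on each of these two regions.

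On the off-axis piece, Lemma~\ref{lem:el-chi-fourier} does all the work: since $\sum_{\ell\ne j}k_\ell^2/|k|^2\ge\mu^2$ on that set, one inserts and extracts this ratio to obtain
$\int_{\{\sum_{\ell\ne j}k_\ell^2>\mu^2|k|^2\}}|\hat\chi_{j,j}(k)|^2\,dk\le \mu^{-2}\int_{\R^n}\sum_{\ell\ne j}\frac{k_\ell^2}{|k|^2}|\hat\chi_{j,j}|^2\,dk,$
and summation over $j$ is then bounded by $\mu^{-2}E_{el}(\chi)$ by Lemma~\ref{lem:el-chi-fourier}.

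The more delicate step is the high-frequency region. A direct use of the pointwise TV bound $|k||\hat\chi_{j,j}(k)|\le |D\chi_{j,j}|(\R^n)$ combined with Cauchy--Schwarz only yields an estimate quadratic in $|D\chi|$ and misses the desired linear $\mu_2^{-1}$ scaling. Instead I would use a semigroup/smoothing trick: letting $G_t$ denote the heat kernel at time $t\sim \mu_2^{-2}$, the symbol $1-e^{-t|k|^2}$ is bounded below by a positive constant on $\{|k|>\mu_2\}$, so Plancherel yields
$\int_{\{|k|>\mu_2\}}|\hat\chi_{j,j}|^2\,dk\lesssim \|\chi_{j,j}-G_t*\chi_{j,j}\|_{L^2}^2.$
The right-hand side is then controlled via the interpolation $\|f\|_{L^2}^2\le\|f\|_{L^\infty}\|f\|_{L^1}$, the uniform $L^\infty$ bound $\|\chi_{j,j}\|_{L^\infty}\lesssim 1$ coming from the discreteness of $K_0$, and the standard BV smoothing estimate $\|\chi_{j,j}-G_t*\chi_{j,j}\|_{L^1}\lesssim \sqrt{t}\,|D\chi_{j,j}|(\R^n)$; together these produce a bound of order $\mu_2^{-1}|D\chi_{j,j}|(\R^n)\lesssim \mu_2^{-1}E_{surf}(\chi)$.

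Summing the two contributions over $j\in\{1,\dots,n\}$ then yields the lemma. The main obstacle is the high-frequency half: a pure TV/Cauchy--Schwarz bound on $\hat\chi$ is too crude (and in two dimensions even divergent), and one has to exploit both the $L^\infty$ bound coming from the discreteness of $K_0$ and the correct BV-to-$L^1$ smoothing rate of the mollifier.
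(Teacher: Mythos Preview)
Your argument is correct and is essentially the standard route: the paper itself omits the proof and refers to \cite[Lemma~4.5]{RT22}, whose argument (there for Fourier series) follows exactly the same two-region splitting, bounding the off-axis contribution by $\mu^{-2}E_{el}(\chi)$ via Lemma~\ref{lem:el-chi-fourier} and the high-frequency contribution by $\mu_2^{-1}E_{surf}(\chi)$ via the $L^\infty$--$BV$ interpolation that your heat-kernel computation makes explicit. The only cosmetic difference is that one can equivalently write the high-frequency step as $\int_{\{|k|>\mu_2\}}|\hat\chi_{j,j}|^2\,dk\le\mu_2^{-1}\int|k|\,|\hat\chi_{j,j}|^2\,dk\lesssim\mu_2^{-1}\|\chi_{j,j}\|_{L^\infty}|D\chi_{j,j}|(\R^n)$, which is the same interpolation inequality packaged without the semigroup.
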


\begin{lem}[Proposition 4.7, \cite{RT22}]\label{lem:commutator2}
Let $K$, $n$ and $\chi$ be as in the statement of Lemma \ref{lem:control-cones1}.
Let $\ell\in\{1,\dots,n\}$ and let $g$ be a polynomial such that for some $\lambda_j\in\R$, $j\in\{1,\dots,n\}$ there holds
$$
\chi_{\ell,\ell}=g\Big(\sum_{j\neq\ell}\lambda_j\chi_{j,j}\Big).
$$
Let $0<\mu<1$, $\mu',\mu''>0$ and $\tilde\mu=M\mu\mu''$ for some constant $M>1$ depending on the degree of $g$.
Then for every $\gamma\in(0,1)$ there exists a constant $C>0$ depending on $n$, $g$, $K$ and $\gamma$ such that there holds
$$
\|\chi_{\ell,\ell}-\chi_{\ell,\mu,\tilde\mu}(D)\chi_{\ell,\ell}\|_{L^2}^2 \le C \sum_{j\neq\ell}|\lambda_j|\psi_\gamma\big(\|\chi_{j,j}-\chi_{j,\mu,\mu''}(D)\chi_{j,j}\|_{L^2}^2\big)+C\|\chi_{\ell,\ell}-\chi_{\ell,\mu,\mu'}(D)\chi_{\ell,\ell}\|_{L^2}^2,
$$
where $\psi_\gamma(z)=\max\{z,z^{1-\gamma}\}$.
\end{lem}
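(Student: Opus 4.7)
My approach is a Fourier-space commutator argument combining the pointwise identity $\chi_{\ell,\ell}=g(h)$ with $h:=\sum_{j\neq\ell}\lambda_j\chi_{j,j}$ and a careful tracking of Fourier supports. The key observation is that, if each $\chi_{j,j}$ ($j\neq\ell$) is frequency-concentrated in its axial cone $C_{j,\mu,\mu''}$, then any monomial $f_{j_1}\cdots f_{j_k}$ in the expansion of $g(h_{\mathrm{loc}})$ has Fourier transform supported in the Minkowski sum of such cones, which is a thin slab around the hyperplane $\{k_\ell=0\}$. Intersecting this slab with any cone around the $e_\ell$-axis shrinks the admissible frequencies to a small ball near the origin of radius $\sim\mu\mu''$; this is exactly the mechanism giving the improved cutoff radius $\tilde\mu=M\mu\mu''$.

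Concretely, I first set $f_j:=\chi_{j,\mu,\mu''}(D)\chi_{j,j}$, $h_{\mathrm{loc}}:=\sum_{j\neq\ell}\lambda_j f_j$, $h_{\mathrm{err}}:=h-h_{\mathrm{loc}}$, so that $\|h_{\mathrm{err}}\|_{L^2}^2\lesssim\sum_{j\neq\ell}|\lambda_j|^2\|\chi_{j,j}-f_j\|_{L^2}^2$. The decomposition
$$\chi_{\ell,\ell}-\chi_{\ell,\mu,\tilde\mu}(D)\chi_{\ell,\ell}=(1-\chi_{\ell,\mu,\tilde\mu}(D))\chi_{\ell,\mu,\mu'}(D)\chi_{\ell,\ell}+(1-\chi_{\ell,\mu,\tilde\mu}(D))(\chi_{\ell,\ell}-\chi_{\ell,\mu,\mu'}(D)\chi_{\ell,\ell})$$
hands over the last $\|\cdot\|_{L^2}^2$ term on the RHS via $L^2$-boundedness of the multiplier. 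In the remaining first summand I would substitute $\chi_{\ell,\mu,\mu'}(D)\chi_{\ell,\ell}=\chi_{\ell,\mu,\mu'}(D)g(h_{\mathrm{loc}})+\chi_{\ell,\mu,\mu'}(D)R$ with $R:=g(h)-g(h_{\mathrm{loc}})$ and treat the two pieces separately.

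For the polynomial remainder I invoke the algebraic factorisation $R=h_{\mathrm{err}}\cdot Q(h,h_{\mathrm{loc}})$ with $Q$ a polynomial of degree $\deg g-1$ (obtained by telescoping $x^k-y^k=(x-y)\sum_i x^i y^{k-1-i}$). Hölder's inequality reduces the task to bounding $Q(h,h_{\mathrm{loc}})$ in some $L^p$; using that $\chi_{j,j}$ is bounded in $L^\infty$ and that Marcinkiewicz multipliers preserve all $L^p$ with $p<\infty$ (but \emph{not} $L^\infty$), I would interpolate the $L^2$-smallness of $h_{\mathrm{err}}$ against its finite $L^p$-norm. This is precisely where the nonlinear penalty $\psi_\gamma(z)=\max\{z,z^{1-\gamma}\}$ enters: taking $p$ large corresponds to $\gamma$ small, and the interpolation exponent $1-\gamma$ becomes the power in $\psi_\gamma$, with the constant $C$ blowing up as $\gamma\to 0$. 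For the main term $\chi_{\ell,\mu,\mu'}(D)g(h_{\mathrm{loc}})$, each factor $f_j$ has Fourier support in $C_{j,2\mu,2\mu''}$ and in particular satisfies $|k_\ell|\le 2\mu|k|\le 4\mu\mu''$, so the $d$-fold products ($d=\deg g$) are supported in the slab $\{|k_\ell|\le 4d\mu\mu'',\,|k|\le 2d\mu''\}$. Intersecting with the support of $\chi_{\ell,\mu,\mu'}(D)$, where $|k_\ell|^2\ge(1-4\mu^2)|k|^2$, forces $|k|\lesssim d\mu\mu''$; choosing $M$ a sufficiently large multiple of $d$ (and tuning the aperture parameters in the smooth cutoffs to absorb the factor-$2$ mismatch between the inner and outer cones) embeds this intersection inside the set where $\chi_{\ell,\mu,\tilde\mu}(D)\equiv1$, so that $(1-\chi_{\ell,\mu,\tilde\mu}(D))$ annihilates this contribution.

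The main obstacle is the $L^2$ bound on $R$: because smooth Fourier multipliers do not act boundedly on $L^\infty$, the localized field $h_{\mathrm{loc}}$ and the polynomial factor $Q(h,h_{\mathrm{loc}})$ lack uniform pointwise control, so the transfer from $L^2$-smallness of $h_{\mathrm{err}}$ to $L^2$-smallness of $R$ is genuinely nonlinear. Balancing the interpolation exponent against the available $L^p$ bound (via Marcinkiewicz) is what both produces the function $\psi_\gamma$ and fixes the dependence of $C$ on $\gamma$, $K$, $n$ and $g$.
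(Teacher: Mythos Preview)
Your proposal is correct and follows essentially the same approach as the paper's reference \cite[Proposition 4.7]{RT22} (the paper itself omits the proof, noting only that the argument carries over from the periodic setting by replacing Fourier sums with integrals). The two-step decomposition via the intermediate cutoff $\chi_{\ell,\mu,\mu'}(D)$, the Minkowski-sum tracking of Fourier supports for $g(h_{\mathrm{loc}})$ leading to the slab-cone intersection of radius $\sim d\mu\mu''$, and the H\"older/Marcinkiewicz interpolation producing the loss $\psi_\gamma$ are precisely the ingredients of the original argument.
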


In what follows we will apply this result for the derivation of lower bounds for higher order laminates. Here we will always apply the result with $0<\tilde{\mu}\leq\mu''\le \mu'<1$ in order to improve on the possible region of Fourier space concentration. 

We conclude this section by giving the following control of low frequencies.

\begin{lem}\label{lem:low_freq}
Let $\varphi\in L^2(\R^n)\cap L^1(\R^n)$, $0<\mu<1$, $\mu'>0$ and let $j\in\{1,\dots,n\}$. Then,
\begin{align*}
\int_{C_{j,\mu,\mu'}} |\hat{\varphi}|^2 dk \lesssim (\mu')^n \mu^{n-1} \|\varphi\|_{L^1}^2.
\end{align*}
\end{lem}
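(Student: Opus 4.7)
The plan is to reduce the claim to a pointwise $L^\infty$ bound on $\hat\varphi$ combined with a purely geometric computation of $|C_{j,\mu,\mu'}|$. Since $\varphi\in L^1(\R^n)$, the standard Hausdorff--Young (or really just the triangle inequality under the integral defining $\hat\varphi$) gives
\[
\|\hat\varphi\|_{L^\infty(\R^n)} \le \|\varphi\|_{L^1(\R^n)}.
\]
Hence the estimate reduces to
\[
\int_{C_{j,\mu,\mu'}} |\hat\varphi|^2\,dk \le \|\varphi\|_{L^1}^2 \cdot |C_{j,\mu,\mu'}|,
\]
so it suffices to prove $|C_{j,\mu,\mu'}| \lesssim (\mu')^n \mu^{n-1}$ uniformly in $\mu\in(0,1)$ and $\mu'>0$.

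For the volume computation I would fix, without loss of generality, $j=1$ and switch to polar coordinates $k=r\omega$, $r\in(0,\mu')$, $\omega\in S^{n-1}$. The conic condition $\sum_{\ell\ne 1}k_\ell^2 \le \mu^2|k|^2$ is homogeneous of degree $2$ in $k$ and translates into $1-\omega_1^2 \le \mu^2$, i.e.\ $|\omega_1|\ge\sqrt{1-\mu^2}$. This decouples the radial and spherical integrals:
\[
|C_{1,\mu,\mu'}| = \int_0^{\mu'} r^{n-1}\,dr \cdot \mathcal{H}^{n-1}\!\bigl(\{\omega\in S^{n-1} : |\omega_1|\ge\sqrt{1-\mu^2}\}\bigr) = \frac{(\mu')^n}{n}\,\sigma(\mu),
\]
where $\sigma(\mu)$ denotes the surface measure of the two antipodal spherical caps.

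The main (and only) technical step is the estimate $\sigma(\mu)\lesssim \mu^{n-1}$ for all $\mu\in(0,1)$. Parameterizing $\omega_1=\cos\theta$, the caps correspond to $|\sin\theta|\le\mu$ and so
\[
\sigma(\mu) \sim \int_0^{\arcsin\mu} (\sin\theta)^{n-2}\,d\theta \lesssim \int_0^{\arcsin\mu} \theta^{n-2}\,d\theta \lesssim (\arcsin\mu)^{n-1} \lesssim \mu^{n-1},
\]
using $\sin\theta\le\theta$ and, in the last step, that $\arcsin\mu\le\frac{\pi}{2}\mu$ on $(0,1)$ (or simply that $\sigma(\mu)\le |S^{n-1}|$ is bounded while $\mu^{n-1}$ is bounded below for $\mu$ bounded away from $0$). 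Inserting this into the displayed volume identity yields $|C_{j,\mu,\mu'}|\lesssim (\mu')^n\mu^{n-1}$, and combining with the $L^\infty$ bound on $\hat\varphi$ concludes the proof.

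I do not foresee a serious obstacle here: the estimate is essentially the observation that a truncated cone of opening $\sim\mu$ and height $\sim\mu'$ in $\R^n$ has volume $\sim(\mu')^n\mu^{n-1}$. The only point to be mindful of is to make the spherical-cap estimate uniform up to $\mu\nearrow 1$, which is handled by the trivial bound $\sigma(\mu)\le|S^{n-1}|$ together with $\mu^{n-1}\gtrsim 1$ in that range.
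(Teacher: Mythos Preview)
Your proposal is correct and follows exactly the paper's approach: the paper's proof also combines the $L^1$--$L^\infty$ bound $\sup_k|\hat\varphi|\le\|\varphi\|_{L^1}$ with the volume estimate $|C_{j,\mu,\mu'}|\lesssim(\mu')^n\mu^{n-1}$, only stating the latter without the spherical-cap computation you spell out.
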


\begin{proof}
The argument follows from the $L^1-L^{\infty}$ bound for the Fourier transform. Indeed,
\begin{align*}
\int_{C_{j,\mu,\mu'}} |\hat{\varphi}|^2 dk  \leq \Big( \sup\limits_{k \in \R^{n}} |\hat{\varphi}| \Big)^2 \big|C_{j,\mu,\mu'}\big| \leq 8\|\varphi\|_{L^1}^2 (\mu')^n \mu^{n-1}.
\end{align*}
\end{proof}

\section{First order laminates: The 1+1-well case}
\label{sec:1+1}

We start our discussion of the outlined nucleation results by considering the simplest situation possible, i.e. $\#K=1$.
A more general version of this problem has been studied in \cite{KK}, in the context of geometrically linearized elasticity.

We translate the results from \cite{KK} into our context.

\begin{thm}[Theorem 2.1 \cite{KK}]
Let $K=\{A\}$ for some $A\in\diag(n,\R)$ with $\rank(A)=1$ and let $\hat E(V)$ be as in \eqref{eq:en-vol-scaled}. Then there exist two positive constants $C_2>C_1>0$ depending on $K$ and $n$ such that for every $V>0$ there holds
\begin{align*}
C_1 \hat r(V) \le \hat E(V)\le C_2 \hat r(V),
\quad \text{where} \quad
\hat r(V)=\begin{cases}
V^\frac{n-1}{n} & V\le1 , \\
V^\frac{2n-2}{2n-1} & V\ge1.
\end{cases}
\end{align*}
\end{thm}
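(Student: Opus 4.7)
The plan is to use the normalization procedure of Section~\ref{rmk:normal} to reduce the statement to matching bounds for the scaled energy $\hat E(V)$ defined in \eqref{eq:en-vol-scaled}. The small-volume range $V \le 1$ is then exactly \eqref{eq:small-vol}, so the remaining work concerns $V \ge 1$, where the target scaling is $V^{(2n-2)/(2n-1)}$. Since $A \in \diag(n,\R)$ has rank one, after a permutation of coordinates I may assume $A = \lambda\, e_1 \otimes e_1$ for some $\lambda \ne 0$; consequently only $\chi_{1,1} = \lambda \mathbf{1}_\Omega$ is nonvanishing, and the unique rank-one direction joining $A$ to the austenite matrix $\mathbf{0}$ is $e_1$.

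For the upper bound I would take $\chi = \lambda \mathbf{1}_\Omega\, e_1 \otimes e_1$ with $\Omega$ a thin, smoothly bounded oblate pancake (an ellipsoid will do) of thickness $r$ in the $e_1$-direction and transverse extension $L$, so that $V \sim r L^{n-1}$ and $L \gg r$. Such a shape makes almost every boundary normal point in the direction $\pm e_1$, i.e.\ rank-one compatible with $\mathbf 0$, so the incompatibility cost is confined to a thin annular region; the choice of a smooth boundary rules out the logarithmic enhancement that would afflict a rectangular slab. A direct Fourier computation via Lemma~\ref{lem:el-chi-fourier} then gives $E_{el}(\chi) \lesssim \lambda^2 r^2 L^{n-2}$, while $E_{surf}(\chi) \sim L^{n-1}$. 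Minimizing $r^2 L^{n-2} + L^{n-1}$ subject to $V \sim r L^{n-1}$ yields $r \sim V^{1/(2n-1)}$, $L \sim V^{2/(2n-1)}$, and hence $\hat E(V) \lesssim V^{(2n-2)/(2n-1)}$.

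For the lower bound I would follow \cite{KK}, combining a Fourier frequency-localization argument with a slicing argument along $e_1$. A first application of Lemmas~\ref{lem:control-cones1} and \ref{lem:low_freq} to $\chi_{1,1}$, splitting its $L^2$-mass into the cheap Fourier cone $C_{1,\mu,\mu'}$ and its complement, gives
\begin{align*}
\lambda^2 V \lesssim (\mu')^n \mu^{n-1} \lambda^2 V^2 + \mu^{-2}\, E_{el}(\chi) + (\mu')^{-1}\, E_{surf}(\chi),
\end{align*}
which upon straightforward optimization in $(\mu,\mu')$ only yields the weaker exponent $V^{(3n-3)/(3n-1)}$. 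The sharp exponent follows once one exploits the single-well structure through a slicing argument: for any admissible $u$ with $u_1 \to 0$ at infinity, integration along each $x_1$-line gives $\lambda m(x') = -\int_\R (\partial_1 u_1 - \lambda \mathbf{1}_\Omega)(x_1, x')\, dx_1$, where $m(x') = |\Omega \cap (\R \times \{x'\})|$ is the one-dimensional slice length. Cauchy-Schwarz along a truncated $x_1$-window (of width controlled by $\diam_{e_1}(\Omega) \lesssim E_{surf}(\chi)$) followed by Cauchy-Schwarz over the projection $Q = \{x' : m(x') > 0\}$ (whose $(n-1)$-dimensional measure is controlled by the perimeter via a Fourier-based isoperimetric bound) produces the interpolation
\begin{align*}
V \lesssim \lambda^{-2}\, E_{el}(\chi)^{1/2}\, E_{surf}(\chi)^{n/(2(n-1))}.
\end{align*}
Young's inequality with the complementary exponents $(n-1)/(2n-1)$ and $n/(2n-1)$ (which sum to $1$) then delivers $V^{(2n-2)/(2n-1)} \lesssim E_{el}(\chi) + E_{surf}(\chi) = \hat E(\chi)$, closing the lower bound.

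The main obstacle will be the slicing step: the unconstrained $H^1$-minimizer of $E_{el}(u,\chi)$ is not compactly supported and, in dimension $n=2$, is not even in $L^2$, so the Cauchy-Schwarz on finite $x_1$-windows produces boundary traces of $u_1$ that need to be tamed. The standard remedies are to average over the window width and to absorb the residual flux into $E_{surf}(\chi)$ via Hardy-type estimates, which is precisely the nontrivial ingredient transferred from \cite{KK}. A secondary but nontrivial point is the upper-bound computation of $E_{el}$ on the smooth pancake: one must verify that the smoothness of $\partial\Omega$ eliminates any logarithmic factor, which requires a careful asymptotic analysis of $\widehat{\mathbf{1}_\Omega}$ near the rank-one direction.
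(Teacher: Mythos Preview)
Your upper bound is essentially the construction the paper defers to \cite{KK}: a thin lens aligned with the rank-one normal $e_1$, with the log-free elastic estimate coming from the smoothness of $\partial\Omega$. So that half is fine.

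The lower bound, however, diverges from the paper and has a genuine gap. The paper does \emph{not} attempt a global interpolation inequality; it follows \cite{KK} and proves a \emph{localized} estimate on balls (if $\|\chi\|_{L^1(B_R)}$ and $|D\chi|(B_R)$ are small, then $E_{el}|_{B_R}\gtrsim R^{-n}\|\chi\|_{L^1(B_{\alpha R})}^2$), and then assembles these via a covering argument. Inside each ball one integrates $\partial_1 u_1$ along $x_1$-segments of length $\sim R$, and the endpoint values of $u_1$ are controlled because one first selects good slices (Step~1--2 of the Claim) on which the tangential energy is small. The localization is what tames exactly the two issues your global argument leaves open.

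Concretely, in your scheme:
\begin{itemize}
\item The bound $\diam_{e_1}(\Omega)\lesssim E_{surf}(\chi)$ is false in general. Nothing prevents $\Omega=\supp(\chi)$ from being disconnected, and even for connected $\Omega$ in $n\ge 3$ a thin tube of length $L$ and radius $r$ has perimeter $\sim L r^{n-2}\ll L$. So you cannot choose a single window of width $\lesssim E_{surf}$ containing $\Omega$.
\item Even on a finite window $[a,b]$, the slicing identity produces the boundary flux $u_1(b,x')-u_1(a,x')$, and for the (near-)minimizer these traces are neither zero nor a priori small; your ``average over the window width plus Hardy-type estimates'' is not a remedy here, because the minimizer of $E_{el}$ is a Riesz-type potential with no decay to exploit.
\end{itemize}
Both problems disappear once you localize to balls $B_R$ satisfying the smallness hypotheses: the window width is $R$ by fiat, and the endpoint traces of $u_1$ are controlled by the choice of good boundary slices. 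That is precisely why \cite{KK}---and the paper---pay the price of the covering argument. Your target interpolation $V\lesssim E_{el}^{1/2}E_{surf}^{n/(2(n-1))}$ would imply the correct scaling, but as written you have not established it; the route you sketch does not close without the localization step.
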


\begin{proof}
As in \cite{KK} the proof of this result consists of two parts: An upper bound and a lower bound. The upper bound construction from \cite{KK} does not make use of gauge invariance but only the presence of a rank-one connection between the two phases. As a consequence, it also works in our context with essentially no modification.

The proof of the lower bound in \cite{KK} consists of two steps: The derivation of a localized lower bound estimate \cite[Proposition 3.1]{KK} stating that if little of the minority phase and only small perimeter is present, then a good lower bound estimate holds true.
Translated to our context, it reads:

\begin{claim}
Let $\chi \in BV(\R^n, K_0)$.
Then there exist constants $c,\alpha>0$ such that if for $R>0$
\begin{equation}\label{eq:lb-1+1-ass}
\|\chi\|_{L^1(B_R)}\le c R^{n} \quad \text{and} \quad |D\chi|(B_R)\le c R^{n-1},
\end{equation}
then
$$
\inf_{u\in H^1(\R^{n};\R^{n})}\int_{B_R}|\nabla u-\chi|^2 dx \ge c R^{- n}\|\chi\|^2_{L^1(B_{\alpha R})}.
$$
\end{claim}

Exploiting this, the full lower bound is proved by a covering argument. Since there is no difference in the covering argument, we only discuss \cite[Proposition 3.1]{KK} which in turn relies on \cite[Lemma 3.2]{KK}, its two-dimensional analogue.
For convenience of the reader and for completeness, we retrace the main ideas of the proof of \cite[Lemma 3.2]{KK}, being more specific in the parts that differ in our context.

\begin{proof}[Ideas of the proof of the claim for $n=2$:]
Without loss of generality we consider $A=e_1\otimes e_1$ and $R=1$. Further, as in \cite{KK} we fix $\alpha = \frac{1}{5}$. 
Following \cite{KK}, we now argue by contradiction, that is letting $\mu:=\|\chi\|_{L^1(B_\alpha)}$ we assume that there exists $u\in H^1(\R^2;\R^2)$ such that
\begin{equation}\label{eq:lb-1+1-cont}
\|\p_1u_1-\chi_{1,1}\|_{L^2(B_1)}+\|\p_2u_2\|_{L^2(B_1)}+\|\p_2u_1\|_{L^2(B_1)}+\|\p_1u_2\|_{L^2(B_1)}\le c_2\mu,
\end{equation}
for some constant $c_2$ arbitrarily small, and split this into several steps.

\emph{Steps 1 and 2:}
We consider three rectangles $Q^{(i)}=\big[l^{(i)}_1,l^{(i)}_2\big]\times[-h,h]$ with $l^{(i)}_2=l^{(i+1)}_1$ such that $l_2^{(3)}-l_1^{(1)},2h\le 1$ and
$$
Q^{(2)}\supset B_\alpha, \quad Q^{(i)}\subset B_{3\alpha},\  i \in \{1,2,3\}.
$$
We also denote $I_1^{(i)}=[l_1^{(i)},l_2^{(i)}]$, $ i \in \{1,2,3\}$, $I_1=[l_1^{(1)},l_2^{(3)}]$ and $I_2=[-h,h]$.
Up to small translations of $Q^{(i)}$ exploiting \eqref{eq:lb-1+1-ass} and \eqref{eq:lb-1+1-cont} one proves $\|\chi\|_{L^1(\p Q^{(i)})}$, $|D\chi|(\p Q^{(i)})=0$ and that
\begin{equation}\label{eq:no-conc}
\int_{\p Q^{(i)}}|\nabla u|^2=\int_{\p Q^{(i)}}|\nabla u-\chi|^2 \le c_2 \mu.
\end{equation}
Moreover, by possibly passing from $u_1$ to $u_1- \langle  u_1 \rangle_{I_1\times \{h\}}$ we may assume that $\langle u_1\rangle_{I_1\times \{h\} } =0$. 

\emph{Step 3 and 4:} We show that $u_1$ is close to zero in mean inside $Q^{(i)}$ for $ i \in \{1,2,3\}$.
We begin with proving that $u_1$ 
is close to $0$ 
on the horizontal 
boundaries of $Q^{(i)}$.
Indeed for every $x\in I_1^{(i)}$, by the fundamental theorem of calculus, H\"older's inequality and \eqref{eq:no-conc}
\begin{align}
\label{eq:first}
|u_1(x,\pm h)-\langle  u_1\rangle_{I_1^{(i)}\times\{\pm h\}}|\le\|\p_1 u_1\|_{L^2(I_1^{(i)}\times\{\pm h\})}\le c_2\mu.
\end{align}
Moreover, for some $x\in I_1^{(i)}$ there holds
$$
|\langle  u_1 \rangle_{I_1^{(i)}\times\{h\}}-\langle u_1 \rangle_{I_1^{(i)}\times\{-h\}}|\le \int\limits_{I_1^{(i)}}\int_{-h}^h |\p_2 u_1(x,t)|dtdx\le c_2\mu.
$$
Due to our normalization and \eqref{eq:first}, we thus obtain that
$$
\|u_1\|_{L^\infty(I_1^{(i)}\times\{\pm h\})} \le 2c_2\mu.
$$
Combined with the off-diagonal bounds in \eqref{eq:lb-1+1-cont} this yields
\begin{equation}\label{eq:mean-cont}
|\langle u_1\rangle_{Q^{(i)}}|\le 3c_2\mu, \  i \in \{1,2,3\}.
\end{equation}

\emph{Step 5:} Finally, let $x_1^*\in I_1^{(1)}$ be such that $\langle u_1\rangle_{Q^{(1)}}=\frac{1}{I_2}\int_{I_2}u_1(x_1^*,t)dt$. Then, using that $u(x_1,x_2)= u(x_1^{\ast},x_2) + \int\limits_{x_1^{\ast}}^{x_1} \p_1 u_1(t,x_2)dt$, we have
\begin{align*}
\langle u_1\rangle_{Q^{(3)}} \gtrsim \langle u_1\rangle_{Q^{(1)}} + \mu + \langle\int_{x_1^*}^{\cdot} \p_1u_1(t,\cdot)-\chi(t,\cdot) dt\rangle_{Q^{(3)}},
\end{align*}
which yields from \eqref{eq:lb-1+1-cont} by taking $c_2$ small enough, that
$$
\langle u_1\rangle_{Q^{(3)}}-\langle u_1\rangle_{Q^{(1)}} \gtrsim \mu.
$$
This contradicts \eqref{eq:mean-cont} by further reducing $c_2$ if needed and the claim is proved.
\end{proof}

The remainder of the argument for Theorem \ref{thm:KK} then follows as in \cite{KK}.
\end{proof}

\section{First order laminates: The 2+1-wells case}
\label{sec:2+1}

In this section, we discuss the proof of Theorem \ref{thm:2wells}, i.e. the setting in which
\begin{align}
\label{eq:two_wells_proof}
K=\{A,B\},
\quad \text{with} \quad
A= \begin{pmatrix} -\lambda & 0 \\ 0 & 0\end{pmatrix},\,
B= \begin{pmatrix} 1-\lambda & 0 \\ 0 & 0 \end{pmatrix}
\end{align}
for some $\lambda\in(0,1)$ fixed, with ``austenite" given by the zero matrix.

In this setting, after the normalization outlined above, we seek to prove the following bounds:

\begin{thm}
\label{thm:2well_norm}
Let $\hat E(V)$ be as in \eqref{eq:en-vol-scaled} and let $K$ be as in \eqref{eq:two_wells_proof}.
Then there exist two positive constants $C_2>C_1>0$ depending on $K$ such that for every $V>0$ there holds
\begin{align*}
C_1 \hat r(V) \le \hat E(V) \le C_2 \hat r(V),
\quad \text{where} \quad
\hat r(V)= \begin{cases}
V^\frac{1}{2} & \text{if } V\le1, \\
V^\frac{3}{5} & \text{if } V>1.
\end{cases}
\end{align*}
\end{thm}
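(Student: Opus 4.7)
The small-volume case $V\le 1$ is already covered by the isoperimetric estimates of Section \ref{sec:small-reg}, so the task reduces to proving $\hat E(V)\sim V^{3/5}$ for $V>1$. For the lower bound I would exploit the Fourier machinery of Section \ref{sec:prelim}; for the upper bound I would construct a Kohn--M\"uller type branched laminate inside a rectangle of carefully chosen aspect ratio.

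For the lower bound, the key observation is that $A$ and $B$ differ only in the $(1,1)$-entry, so that $\chi_{1,1}\in\{-\lambda,0,1-\lambda\}$ is supported exactly on the nucleus, yielding $\|\chi_{1,1}\|_{L^2}^2\gtrsim V$ and $\|\chi_{1,1}\|_{L^1}\lesssim V$. Decomposing $\chi_{1,1}=\chi_{1,\mu,\mu'}(D)\chi_{1,1}+(\Id-\chi_{1,\mu,\mu'}(D))\chi_{1,1}$, the part supported in the cone $C_{1,2\mu,2\mu'}$ is estimated by Lemma \ref{lem:low_freq} and the complement by Lemma \ref{lem:control-cones1}, giving
\[
V\lesssim \|\chi_{1,1}\|_{L^2}^2\lesssim (\mu')^2\mu\,V^2+\mu^{-2}E_{el}(\chi)+(\mu')^{-1}E_{surf}(\chi).
\]
Setting $\mu=cV^{-1/5}$ and $\mu'=cV^{-2/5}$ with $c$ sufficiently small so that the first term on the right is absorbed into $\tfrac12\|\chi_{1,1}\|_{L^2}^2$ leaves $V\lesssim V^{2/5}(E_{el}(\chi)+E_{surf}(\chi))$, i.e.\ $\hat E(\chi)\gtrsim V^{3/5}$. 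The exponents $(1/5,2/5)$ are forced: the constraint $V\mu(\mu')^2\lesssim 1$ coupled with the penalties $\mu^{-2}$ and $(\mu')^{-1}$ uniquely selects this pair, producing the $3/5$ scaling.

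For the upper bound I would place the nucleus inside a rectangle $R=[0,L_1]\times[0,L_2]$ with $L_1\sim V^{2/5}$ and $L_2\sim V^{3/5}$. Inside $R$, the variants $A$ and $B$ are arranged as a laminate with normal $e_1$ and volume fractions $1-\lambda$ and $\lambda$ so that the spatial average of $\chi$ equals $\textbf{0}$, matching the austenite exterior; towards the two horizontal edges of $R$ the laminate is refined in a self-similar Kohn--M\"uller branching so that the associated deformation $u$ connects smoothly to $\nabla u\equiv\textbf{0}$ outside $R$. A standard branching accounting produces an interior surface energy $\sim L_1L_2/\ell$ and a per-edge branched cost $\sim L_1\ell^{1/2}$; balancing these selects the interior period $\ell\sim L_2^{2/3}$ and yields total energy $\sim L_1L_2^{1/3}+L_1+L_2\sim V^{3/5}$ at the chosen aspect ratio.

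The cleaner half of the argument is the Fourier lower bound, which is essentially a one-line optimisation of the cone parameters $(\mu,\mu')$. The more delicate part is the upper bound at the critical aspect ratio $\ell\sim L_1$: only essentially one period of the laminate fits across the width of $R$ and the branched regions fill nearly its entire vertical extent. Producing an explicit competitor that respects this marginal geometry -- matching $\nabla u$ on the vertical edges, handling the transition in the branching layer, and honouring the normalised regularisation length $1$ as the finest admissible scale -- is the main technical hurdle. Once such a construction is in place, undoing the rescaling in \eqref{eq:rescaling} transfers both bounds back to $E_\epsilon(V)$, completing Theorem \ref{thm:2wells}.
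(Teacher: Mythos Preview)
Your lower bound is exactly the paper's argument (Proposition~\ref{prop:2wells-lb}): the same cone decomposition via Lemmas~\ref{lem:control-cones1} and~\ref{lem:low_freq}, the same inequality $V\lesssim \mu(\mu')^2V^2+(\mu^{-2}+(\mu')^{-1})\hat E(\chi)$, and the same optimal pair $\mu\sim V^{-1/5}$, $\mu'\sim V^{-2/5}$; the paper merely performs the optimisation in two stages rather than one.

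Your upper bound is also correct and coincides with the paper's alternative rectangle construction in Appendix~\ref{app:first} (Proposition~\ref{prop:2wells-ub_branching}), which indeed lands at $N\sim 1$ periods and energy $L^3/H+H\sim V^{3/5}$ for $L\sim V^{2/5}$, $H\sim V^{3/5}$. The paper's \emph{main} construction, however, is different and neatly sidesteps the ``marginal geometry'' hurdle you flag: instead of a rectangle with branching, Proposition~\ref{prop:3wells-ub}---sorry, Proposition~\ref{prop:2wells-ub}---uses a \emph{diamond} $\Omega=\conv\{(0,0),(\lambda L,\pm H/2),(L,0)\}$ with a single interface at $x_1=\lambda L$ and the piecewise affine map $u_1(x)=(1-\lambda)(x_1-\tfrac{2\lambda L}{H}|x_2|)$ on the left, $u_1(x)=\lambda(L-x_1-\tfrac{2(1-\lambda)L}{H}|x_2|)$ on the right, $u_2\equiv 0$. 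No branching is needed at all: the off-diagonal error $|\partial_2 u_1|\sim L/H$ gives $E_{el}\lesssim L^3/H$, the perimeter gives $E_{surf}\lesssim H$, and the same optimisation yields $V^{3/5}$. This buys you an explicit two-region competitor with none of the delicate fine-scale matching, at the cost of a non-rectangular support.
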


Rescaling this as in \eqref{eq:rescaling} then implies the claim of Theorem \ref{thm:2wells}. 

In order to infer these bounds, we argue in two steps: First, in Proposition \ref{prop:2wells-lb} we deduce lower bounds. Next, in Proposition \ref{prop:2wells-ub}, we provide an upper bound construction. Combining these observations with the small volume estimates from Section \ref{sec:small-reg} and the normalization arguments from Section \ref{rmk:normal} then implies the claim from Theorems \ref{thm:2wells} and \ref{thm:2well_norm}.

\subsection{Lower bounds}
In deducing the lower bounds, we first seek to prove the following proposition:

\begin{prop}\label{prop:2wells-lb}
Let $\hat E(V)$ be as in \eqref{eq:en-vol-scaled} and let $K$ be as above. Then for every $V>1$ there holds
\begin{align}
\label{eq:two_wells}
\hat{E}(V) \gtrsim V^{3/5}.
\end{align}
\end{prop}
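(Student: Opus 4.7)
The plan is to work entirely in frequency space, exploiting that only one diagonal component $\chi_{1,1}$ is active (since the second well matrices all have vanishing $(2,2)$-entry). The elastic lower bound from Lemma~\ref{lem:el-chi-fourier} specializes to
\begin{equation*}
E_{el}(\chi) \geq \int_{\R^2} \frac{k_2^2}{|k|^2} |\hat{\chi}_{1,1}|^2 \, dk,
\end{equation*}
which penalizes $L^2$ mass of $\hat\chi_{1,1}$ away from a thin horizontal cone around the $k_1$-axis. The surface energy, in turn, penalizes concentration at high frequencies. The strategy is to combine these two penalizations to exclude $\hat\chi_{1,1}$ from a tiny low-frequency cone $C_{1,\mu,\mu_2}$, then argue that the little room left in that cone is insufficient to accommodate the required $L^2$-mass $\|\chi_{1,1}\|_{L^2}^2 \sim V$.

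Concretely, I would proceed in three steps. First, apply Lemma~\ref{lem:control-cones1} (with $j=1$) to obtain
\begin{equation*}
\|\chi_{1,1} - \chi_{1,\mu,\mu_2}(D)\chi_{1,1}\|_{L^2}^2 \lesssim \mu^{-2} E_{el}(\chi) + \mu_2^{-1} E_{surf}(\chi) \lesssim (\mu^{-2} + \mu_2^{-1}) \hat E(\chi).
\end{equation*}
Second, control the low-frequency piece using the $L^1$-bound $\|\chi_{1,1}\|_{L^1} \lesssim V$ (since $\chi_{1,1} \in \{-\lambda, 0, 1-\lambda\}$ on a set of volume $V$) together with Lemma~\ref{lem:low_freq}:
\begin{equation*}
\|\chi_{1,\mu,\mu_2}(D)\chi_{1,1}\|_{L^2}^2 \leq \int_{C_{1,2\mu,2\mu_2}} |\hat\chi_{1,1}|^2 \, dk \lesssim \mu_2^2 \mu V^2.
\end{equation*}
Third, combine with the trivial lower bound $\|\chi_{1,1}\|_{L^2}^2 \gtrsim V$ via the triangle inequality (and Plancherel) to reach
\begin{equation*}
V \lesssim \mu_2^2 \mu V^2 + (\mu^{-2} + \mu_2^{-1}) \hat E(\chi).
\end{equation*}

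The final step is to optimize the free parameters. Setting $\mu = \delta V^{-1/5}$ and $\mu_2 = \delta V^{-2/5}$ for a sufficiently small $\delta > 0$ (which is admissible for $V>1$, since then $\mu < 1$), one computes $\mu_2^2 \mu V^2 = \delta^3 V$ and $\mu^{-2} \sim \mu_2^{-1} \sim \delta^{-2} V^{2/5}$. Choosing $\delta$ small enough to absorb the first term into the left-hand side yields $V \lesssim V^{2/5} \hat E(\chi)$, which is exactly the claim $\hat E(\chi) \gtrsim V^{3/5}$.

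The only non-routine point is the optimization itself: identifying the exponents requires noticing that the three contributions $\mu_2^2 \mu V^2$, $\mu^{-2}$, and $\mu_2^{-1}$ should all scale comparably in $V$, which forces $\mu_2 \sim \mu^2$ and then $\mu^5 \sim V^{-1}$. Everything else is a direct application of the frequency-space lemmas already collected in Section~\ref{sec:prelim}, and no gauge invariance or intricate geometric argument is required, reflecting that this case is governed purely by the one active component $\chi_{1,1}$.
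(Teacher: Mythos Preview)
Your proof is correct and follows essentially the same approach as the paper: split $\|\chi_{1,1}\|_{L^2}^2$ via the multiplier $\chi_{1,\mu,\mu_2}(D)$, bound the low-frequency cone contribution by Lemma~\ref{lem:low_freq} and the remainder by Lemma~\ref{lem:control-cones1}, then optimize. The only cosmetic difference is in the optimization: the paper first eliminates $\mu$ by setting $\mu \sim (\mu_2^2 V)^{-1}$ and then optimizes in $\mu_2$, whereas you insert the balanced scalings $\mu \sim V^{-1/5}$, $\mu_2 \sim V^{-2/5}$ directly; both routes arrive at the same exponents.
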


\begin{proof}
Let $\mu,\mu_2>0$ be two arbitrary constants which are to be determined below.
We estimate as follows by means of Lemmas \ref{lem:control-cones1} and \ref{lem:low_freq}: 
\begin{align*}
\min\{\lambda^2, (1-\lambda)^2\} V &\leq \|\chi_{1,1}\|_{L^2}^2 \leq \|\chi_{1,\mu,\mu_2}(D)\chi_{1,1}\|_{L^2}^2 +\|\chi_{1,1}- \chi_{1,\mu,\mu_2}(D) \chi_{1,1}\|_{L^2}^2\\
&\leq \max\{\lambda^2, (1-\lambda)^2\} 8 \mu_2^2 \mu V^2 + C\mu^{-2} E_{el}(\chi) + C\mu_2^{-1} E_{surf}(\chi)\\
& \le \max\{\lambda^2, (1-\lambda)^2\} 8 \mu_2^2 \mu V^2 +  C(\mu^{-2} + \mu_2^{-1}) \hat{E}(\chi).
\end{align*}
With the choice $\mu=\min\left\{\frac{\lambda^2}{(1-\lambda)^2}, \frac{(1-\lambda)^2}{\lambda^2} \right\} \frac{1}{16\mu_2^2 V}$ we obtain
\begin{align*}
\min\{\lambda^2, (1-\lambda)^2\} V \le C'(\mu_2^4 V^2+\mu_2^{-1})\hat E(\chi),
\end{align*}
for some $C'>0$.
We further optimize the right hand side in $\mu_2$ which yields $\mu_2 \sim V^{-\frac{2}{5}}$. Inserting this into the estimate and rearranging then yields the claim.
\end{proof}

\subsection{Upper bounds}
We now deal with an upper bound construction. For this we present two types of constructions: One directly using branchings in thin long domains aligned with the direction of lamination, the other exploiting the flexibility of the domain more effectively by working in diamond- or lens-shaped domains.
Since the latter are also observed in experiments \cite{TX90, Nie17,Schwa21}, we present the diamond-shaped constructions in the main body of the text and postpone the rectangular ones to the appendix. Diamond-shaped constructions which in addition incorporate  additional determinant constraints have been used in \cite{C}.

\begin{prop}
\label{prop:2wells-ub}
Let $\hat{E}(u,\chi)$ be as in \eqref{eq:en-scaled} and let $K$ be as in \eqref{eq:two_wells_proof}.
Let $V>1$ be given and let $\Omega=\conv\Big(\Big\{(0,0),\Big(\lambda L,\frac{H}{2}\Big),\Big(\lambda L,-\frac{H}{2}\Big),(L,0)\Big\}\Big)$ with $H>L>1$ and $HL=V$.
Then there exist $u\in W_0^{1,\infty}(\Omega;\R^2)$ and $\chi\in BV(\R^2;K_0)$ with $\supp(\chi)=\Omega$ such that
$$
\hat E(u,\chi)\lesssim V^\frac{3}{5}.
$$
\end{prop}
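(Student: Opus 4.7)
My plan is to construct a tapered laminate inside the diamond $\Omega$ using a single interior interface between the two wells $A$ and $B$, with no branching: the diamond geometry itself supplies the refinement needed near the slanted boundary. The optimal aspect ratio will be chosen at the end of the argument.

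Concretely, I will write the horizontal slice $\Omega \cap (\R \times \{x_2\})$ as $[a(x_2),b(x_2)]$, with $a(x_2) = 2\lambda L|x_2|/H$ and $b(x_2) = L - 2(1-\lambda)L|x_2|/H$. Setting $p(x_2) := b(x_2)-a(x_2)$ and $\xi(x_1,x_2) := (x_1-a(x_2))/p(x_2)$, I define $\chi = A$ on the part of $\Omega$ where $\xi < 1-\lambda$, $\chi = B$ where $\xi \ge 1-\lambda$, and $\chi = \textbf{0}$ outside $\Omega$. The interior phase interface is then a chevron joining $((1-\lambda)L,0)$ to the horizontal vertices $(\lambda L,\pm H/2)$. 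I then set $u_2 \equiv 0$ and $u_1(x_1,x_2) := p(x_2)\,F(\xi(x_1,x_2))$ inside $\Omega$, zero outside, where $F(\xi) := \int_0^\xi f(t)\,dt$ is the antiderivative of the $1$-periodic profile $f = -\lambda\,\mathbf{1}_{[0,1-\lambda)} + (1-\lambda)\,\mathbf{1}_{[1-\lambda,1)}$. The structural point that makes this work is $F(0) = F(1) = 0$, which is exactly the relation $(1-\lambda)A + \lambda B = \textbf{0}$ specifying the weights giving zero mean; it forces $u_1$ to vanish on all four slanted edges of $\partial \Omega$, so that $u \in W_0^{1,\infty}(\Omega;\R^2)$ after extension by zero, with $\supp \chi = \Omega$ and $\chi \in BV(\R^2;K_0)$.

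For the energy estimates, $\partial_1 u_1 = \chi_{1,1}$ by the fundamental theorem of calculus, so $\nabla u - \chi$ is concentrated in the single off-diagonal entry $\partial_2 u_1$. Differentiating $u_1 = p\,F(\xi)$ explicitly gives $\partial_2 u_1 = p'(x_2) F(\xi) - a'(x_2) f(\xi) - \xi\,p'(x_2) f(\xi)$; using $|a'|, |p'| \lesssim L/H$, $|\xi| \le 1$, and $|F|, |f| \lesssim 1$, this yields $|\partial_2 u_1| \lesssim L/H$ throughout $\Omega$, hence $E_{el}(u,\chi) \lesssim (L/H)^2 |\Omega| \lesssim L^3/H$. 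The surface term is trivial: the interior chevron and $\partial \Omega$ each have $\mathcal{H}^1$-measure $\lesssim H$ (using $H > L$), so $E_{surf}(\chi) \lesssim H$. Summing yields $\hat E(u,\chi) \lesssim L^3/H + H$, and balancing the two contributions by choosing $H \sim V^{3/5}$ and $L \sim V^{2/5}$ (which satisfies $H > L > 1$ precisely when $V > 1$) produces the claimed estimate $\hat E(u,\chi) \lesssim V^{3/5}$.

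The main technical point is the bound $|\partial_2 u_1| \lesssim L/H$: the three summands in the formula for $\partial_2 u_1$ are individually of the correct size, but care must be taken because $a$ and $p$ are only piecewise linear in $x_2$ (with a kink at $x_2 = 0$), so $\partial_2 u_1$ is merely piecewise continuous. This is harmless for the $L^2$ elastic energy, and the $W^{1,\infty}$ membership is preserved since the jumps remain bounded. The remaining ingredients (the $BV$ regularity of $\chi$ and the optimal balance between $L$ and $H$) are routine.
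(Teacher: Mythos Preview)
Your proposal is correct and essentially the same as the paper's approach: both construct a piecewise affine $u_1$ on the diamond with a single interior interface, obtain $E_{el}\lesssim L^3/H$ and $E_{surf}\lesssim H$, and optimize to $L\sim V^{2/5}$, $H\sim V^{3/5}$. The only cosmetic difference is that the paper places the interface on the vertical segment $\{x_1=\lambda L\}$ (with $B$ on the left and $A$ on the right), whereas your profile-function formalism produces the chevron $\{\xi=1-\lambda\}$ joining $((1-\lambda)L,0)$ to $(\lambda L,\pm H/2)$ with the roles of $A$ and $B$ exchanged; in either case $\partial_1 u_1=\chi_{1,1}$ exactly and $|\partial_2 u_1|\lesssim L/H$, so the energy computations and the final optimization are identical.
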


\begin{proof}
We consider $u:\R^2\to\R^2$ with $u_2\equiv0$ and $u_1$ defined by integration in order to attain zero boundary condition on $\p\Omega$ and having $\p_1u_1=1-\lambda$ in $\Omega\cap\big((0,\lambda L)\times\R\big)$ and $\p_1u_1=-\lambda$ in $\Omega\cap\big((\lambda L,L)\times\R\big)$, that is
$$
u_1(x_1,x_2)=\begin{cases}\displaystyle
(1-\lambda)\Big(x_1-\frac{2\lambda L}{H}|x_2|\Big) & \frac{2\lambda L}{H}|x_2|\le x_1\le \lambda L,\\ \displaystyle
\lambda\Big(-x_1-\frac{2(1-\lambda)L}{H}|x_2|+L\Big) & \lambda L\le x_1\le L-\frac{2(1-\lambda)L}{H}|x_2|,\\
0 & \text{otherwise}.
\end{cases}
$$
In particular, $u\in W_0^{1,\infty}(\Omega;\R^2)$.
Let $\chi\in BV(\R^2;K_0)$ be the projection of $\nabla u$ onto $K_0$. Noticing that $\p_1u_1\equiv\chi_{1,1}$, we obtain
$$
E_{el}(u,\chi)=\int_\Omega|\p_2 u_1|^2dx\le\frac{L^3}{2H}
\quad \text{and} \quad
E_{surf}(\chi)\le H+\sqrt{\frac{H^2}{4}+L^2}.
$$
Hence,
$$
\hat E(u,\chi)\lesssim\frac{L^3}{H}+H,
$$
and by an optimization argument we obtain $L\sim H^\frac{2}{3}$ which implies $V\sim H^\frac{5}{3}$ and the result follows.
\end{proof}

\begin{figure}[t]
\includegraphics{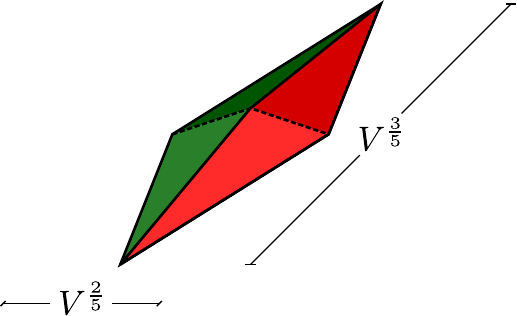}

\caption{Example of (the non-trivial component of) a two-dimensional inclusion which attains the upper bound.
Different colors represent different phases, black lines are the jump set of the phase-indicator function; on dashed lines the gradient jumps inside the same phase.}

\label{fig:lens2+1}
\end{figure}

\begin{rmk}
\label{rmk:info}
Note that the lower bound of Proposition \ref{prop:2wells-lb} is ansatz-free and therefore does not give any information on optimal structures $u$ and $\chi$.
On the other side, the upper bound of Proposition \ref{prop:2wells-ub} provides some information in that direction.
In particular, we know that we can achieve the optimal energy-scaling with structures supported in ``thin'' domains of height $V^\frac{3}{5}$ and width $V^\frac{2}{5}$. We however expect that the analysis of the optimal shape requires a more detailed study of the Euler-Lagrange equations associated with our energies.
\end{rmk}

\subsection{Proof of Corollary \ref{cor:lamination_1}}

Finally, we provide the proof of Corollary \ref{cor:lamination_1} which allows us to generalize the previous scaling law to arbitrary dimensions.

\begin{proof}[Proof of Corollary \ref{cor:lamination_1}]

We split the proof into two steps:

\emph{Step 1: Lower bounds.}
The results in the small volume regimes directly follow from the isoperimetric considerations from Section \ref{sec:small-reg}.
In the case $V>1$ (in the normalized setting), retracing the proof of Proposition \ref{prop:2wells-lb} in dimension $n$, we derive
$$
V\lesssim\mu^{n-1}\mu_2^n V^2+(\mu^{-2}+\mu_2^{-1})\hat E(\chi)
$$
yielding, after optimization, the lower bound
$$
V^\frac{3n-3}{3n-1}\lesssim \hat E(\chi).
$$

\emph{Step 2: Upper bounds.}
Again we first provide a lens-shaped construction for the upper bound analogously as done in the two-dimensional case in the proof of Proposition \ref{prop:2wells-ub}. A second argument in a thin rectangle is included in the appendix.
Our $n$-dimensional construction consists of a diamond-shaped inclusion whose lamination direction is ``very thin'' with respect to the others.
We consider the case $K=\{\pm e_1\otimes e_1\}\subset\R^{n\times n}$ from which one can recover the general case with a change of variables.

For $H>L>1$ consider the domain
$$
\Omega=\conv\Big(\Big\{\pm\frac{L}{2}e_1,\Big\{\pm\frac{H}{2}e_j\Big\}_{j=2}^n\Big\}\Big).
$$
We consider $u:\R^n\to\R^n$ with $u_j\equiv0$ for $j\in\{2,\dots,n\}$ and
$$
u_1(x)=\begin{cases}
|x_1|+\frac{L}{H}(|x_2|+\dots+|x_n|)-\frac{L}{2} & x\in\Omega, \\
0 & \text{otherwise.}
\end{cases}
$$
Notice that $u\in W^{1,\infty}_0(\Omega;\R^n)$ with $\|\nabla u\|_{L^\infty}\lesssim1$.
Let $\chi$ denote the projection of $\nabla u$ onto $K_0$. Then we have $\chi=e_1\otimes e_1$ on $\Omega\cap\big((0,+\infty)\times\R^{n-1}\big)$ and $\chi=-e_1\otimes e_1$ on $\Omega\cap\big((-\infty,0)\times\R^{n-1}\big)$.
Hence,
$$
E_{el}(u,\chi)\lesssim|\Omega|\frac{L^2}{H^2}\lesssim L^3 H^{n-3}
\quad \text{and} \quad
E_{surf}(\chi)\lesssim H^{n-1}.
$$
This implies that $\hat E(u,\chi)\lesssim L^3 H^{n-3}+H^{n-1}$. Optimizing this we get $L\sim H^\frac{2}{3}$ which yields the desired result from the volume constraint.
\end{proof}

\section{A second order laminate: An example of a 4+1 setting}
\label{sec:second_lam}

We next proceed to the analysis of second order laminates. To this end we consider the following differential inclusion:
\begin{align}
\label{eq:ex_incl2}
\nabla u \in \begin{pmatrix} 
-\chi_1 -\chi_2 + \chi_3 + \chi_4 & 0 \\ 0 & -2\chi_1 + \chi_2 + 2 \chi_3 - \chi_4
\end{pmatrix},
\end{align}
which is the indicator function formulation of the differential inclusion from Section \ref{sec:intro_second_order}. In this section, it is our main objective to prove the claim of Theorem \ref{thm:second_intro}.

We recall that in this setting, in particular, the zero matrix belongs to $K^{lc}$ and can be obtained as
$$
\frac{1}{2}\Big(\frac{1}{3}A_1+\frac{2}{3}A_2\Big)+\frac{1}{2}\Big(\frac{1}{3}A_3+\frac{2}{3}A_4\Big)=\textbf{0},
$$
that is the zero matrix is a second order laminate of $K$.
Building on the differential inclusion from \eqref{eq:ex_incl2}, a useful way to write the elastic energy is the following
$$
E_{el}(u,\chi)=\int_{\R^2}\Big|\nabla u-\chi_1 A_1-\chi_2 A_2-\chi_3 A_3-\chi_4 A_4\Big|^2 dx
$$
with $\chi_j\in BV(\R^2;\{0,1\})$ and $\chi_1+\chi_2+\chi_3+\chi_4\le1$.
In terms of the phase indicator $\chi$, we have $\chi=\sum_{j=1}^4\chi_j A_j$, hence $\chi_{1,1}=-\chi_1-\chi_2+\chi_3+\chi_4$ and $\chi_{2,2}=-2\chi_1+\chi_2+2\chi_3-\chi_4$.

With this notation we can write one component in terms of the other as follows
\begin{equation}\label{eq:nonlinear}
\chi_{1,1}=g(\chi_{2,2}),
\end{equation}
where $g$ is a polynomial of degree $3$.
The choice of $g$ is not unique and it can be made explicit e.g. by interpolation.
An example of such a polynomial is $g(t)=\frac{1}{2}t^3-\frac{3}{2}t$.
Notice that the condition $g(0)=0$ ensures that \eqref{eq:nonlinear} is globally satisfied, in particular also outside the inclusion domain.
As before, by rescaling and the small volume energy bounds from Section \ref{sec:prelim}, it suffices to prove the following result:

\begin{thm}
\label{thm:second}
Let $\hat E(V)$ be as in \eqref{eq:en-vol-scaled} and let $K$ be as in \eqref{eq:4_wells}.
Then, there exist two positive constants $C_2>C_1>0$ depending on $K$ such that for every $V>0$ there holds
\begin{align*}
C_1 \hat r(V) \le \hat E(V) \le C_2 \hat r(V),
\quad \text{where} \quad
\hat r(V)= \begin{cases}
V^\frac{1}{2} & \text{if } V\le1, \\
V^\frac{5}{7} & \text{if } V>1.
\end{cases}
\end{align*}
\end{thm}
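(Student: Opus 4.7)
Since Section \ref{sec:small-reg} already supplies the matching bounds $\hat E(V)\sim V^{1/2}$ for $V\le 1$, the remaining task is to establish $\hat E(V)\sim V^{5/7}$ in the large-volume regime $V>1$.

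For the \emph{lower bound}, the plan is to iterate the frequency-localization tools of Section \ref{sec:prelim} so as to reflect the second-order nature of the lamination. The key ingredient is the polynomial identity $\chi_{1,1}=g(\chi_{2,2})$ with $g(t)=\tfrac12 t^3-\tfrac32 t$ from \eqref{eq:nonlinear}, which makes the commutator Lemma \ref{lem:commutator2} applicable and allows a two-step sharpening of a single cone localization. Starting from $\|\chi_{1,1}\|_{L^2}^2=V$ (valid since $|\chi_{1,1}|=1$ on $\supp\chi$), I would first apply Lemma \ref{lem:control-cones1} to $\chi_{2,2}$ with parameters $(\mu,\mu'')$, then invoke Lemma \ref{lem:commutator2} with $\ell=1$, $j=2$ to localize $\chi_{1,1}$ onto the narrower cone $C_{1,\mu,\tilde\mu}$ with $\tilde\mu=M\mu\mu''$ ($M$ depending on $\deg g=3$), and finally dominate the low-frequency contribution by $\tilde\mu^{2}\mu V^{2}\lesssim\mu^{3}\mu''^{2}V^{2}$ via Lemma \ref{lem:low_freq}. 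Assembling these pieces produces
$$V \lesssim \mu^{3}\mu''^{2}V^{2} + \psi_\gamma\bigl((\mu^{-2}+\mu''^{-1})\hat E(\chi)\bigr) + (\mu^{-2}+\mu'^{-1})\hat E(\chi),$$
and the choice $\mu\sim V^{-1/7}$, $\mu''\sim V^{-2/7}$, $\mu'\sim 1$ (which equates $\mu^{-2}$ and $\mu''^{-1}$ while making $\mu^{3}\mu''^{2}V^{2}\sim V$ absorbable on the left) yields $\hat E(\chi)\gtrsim V^{5/7}$.

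For the \emph{upper bound}, the plan is to construct a two-scale branched microstructure mirroring the second-order hierarchy that produces $\textbf{0}$ in $K^{lc}$: in a thin rhombic domain $\Omega$ of dimensions $H\times L$ with $HL\sim V$, use a coarse lamination in direction $e_1$ between the averaged states $\pm e_1\otimes e_1$, each internally refined by a fine lamination in direction $e_2$ between the pairs $\{A_1,A_2\}$ or $\{A_3,A_4\}$. Both families are branched as one approaches the corresponding portion of $\partial\Omega$ and are smoothly connected to the zero boundary datum by a cut-off. Balancing the elastic residue of the two branching zones against their respective surface energies and optimizing the aspect ratio $H/L$ together with the two branching rates should yield the matching bound $\hat E\lesssim V^{5/7}$; this is in the spirit of the two-scale branched constructions developed in \cite{RT21,RT22}.

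The main obstacle is the lower-bound optimization. Two difficulties intertwine: the rigid relation $\tilde\mu=M\mu\mu''$ forced by Lemma \ref{lem:commutator2} couples the two localization scales into a three-parameter problem in $(\mu,\mu'',\mu')$, and the non-linearity $\psi_\gamma(z)=\max\{z,z^{1-\gamma}\}$ threatens to inflate the final exponent via its $(1-\gamma)$-branch. The latter is circumvented by observing that the isoperimetric baseline $\hat E(\chi)\gtrsim V^{1/2}$ (from the perimeter contribution) forces $S:=(\mu^{-2}+\mu''^{-1})\hat E(\chi)\gtrsim V^{2/7}\cdot V^{1/2}=V^{11/14}>1$ for $V>1$, so that in the relevant range $\psi_\gamma(S)=S$ and the $\gamma$-correction is invisible. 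On the upper-bound side, the technically delicate point is the consistent joint refinement of the two branching families near the thin extremities of $\Omega$; this follows the now-standard bookkeeping of multi-scale branched constructions.
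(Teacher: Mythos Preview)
Your proposal is correct and follows essentially the paper's approach: the lower bound is exactly the paper's argument (Lemma \ref{lem:commutator2} applied to $\chi_{1,1}=g(\chi_{2,2})$ with $\tilde\mu\sim\mu\mu''$, then Lemmas \ref{lem:control-cones1} and \ref{lem:low_freq}, followed by the same optimization $\mu\sim V^{-1/7}$, $\mu''\sim V^{-2/7}$ and the same isoperimetric removal of $\psi_\gamma$), while your upper-bound sketch is a mild hybrid of the paper's two constructions---the rhombic domain of Proposition \ref{prop:3wells-ub} combined with the double-branching bookkeeping of the appendix---both of which give $\hat E\lesssim V^{5/7}$ after optimizing to $H\sim V^{4/7}$, $L\sim V^{3/7}$.
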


As above, we split this into two steps: In Proposition \ref{prop:3wells-lb} we first deduce corresponding lower bounds and in Proposition \ref{prop:3wells-ub} we provide matching upper bounds. Using the considerations from Sections \ref{rmk:normal} and \ref{sec:small-reg} then implies the desired claim of Theorem \ref{thm:second}.

\subsection{Lower bounds}

We first give a lower bound for the large inclusion regime.

\begin{prop}\label{prop:3wells-lb}
Let $\hat E(V)$ be as in \eqref{eq:en-vol-scaled} and let $K$ be as in \eqref{eq:4_wells}.
Then for every $V>1$ there holds
\begin{align}
\label{eq:3_wells}
\hat{E}(V) \gtrsim V^\frac{5}{7}.
\end{align}
\end{prop}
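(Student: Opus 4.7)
The plan is to adapt the Fourier-space strategy of Proposition \ref{prop:2wells-lb} while crucially exploiting the polynomial relation $\chi_{1,1}=g(\chi_{2,2})$ with $g(t)=\tfrac{1}{2}t^3-\tfrac{3}{2}t$, which forces an improved frequency localization by means of the commutator estimate of Lemma \ref{lem:commutator2}. A single application of Lemma \ref{lem:control-cones1} to $\chi_{1,1}$ would yield concentration of $\hat\chi_{1,1}$ in a cone $C_{1,\mu,\mu'}$ of height $\mu'$ along the $k_1$-axis; replacing this by the commutator statement improves the height from $\mu'$ down to $\tilde\mu=M\mu\mu''$, which is what produces the exponent $5/7$ instead of $3/5$.

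More precisely, on $\supp\chi$ the indicator $\chi_{1,1}$ takes values in $\{\pm1\}$, hence $\|\chi_{1,1}\|_{L^2}^2\sim V$. Introducing parameters $0<\mu<1$ and $0<\tilde\mu=M\mu\mu''\le\mu''\le\mu'<1$ to be optimized, I split
\begin{equation*}
\|\chi_{1,1}\|_{L^2}^2\le\|\chi_{1,\mu,\tilde\mu}(D)\chi_{1,1}\|_{L^2}^2+\|\chi_{1,1}-\chi_{1,\mu,\tilde\mu}(D)\chi_{1,1}\|_{L^2}^2.
\end{equation*}
By Lemma \ref{lem:low_freq} the first term is bounded by $C\tilde\mu^2\mu V^2\lesssim \mu^3(\mu'')^2V^2$. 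For the second term, Lemma \ref{lem:commutator2} applied with $\ell=1$ and the cubic $g$ yields
\begin{equation*}
\|\chi_{1,1}-\chi_{1,\mu,\tilde\mu}(D)\chi_{1,1}\|_{L^2}^2
\lesssim \psi_\gamma\!\big(\|\chi_{2,2}-\chi_{2,\mu,\mu''}(D)\chi_{2,2}\|_{L^2}^2\big)
+\|\chi_{1,1}-\chi_{1,\mu,\mu'}(D)\chi_{1,1}\|_{L^2}^2,
\end{equation*}
and the two terms on the right-hand side are controlled via Lemma \ref{lem:control-cones1} by $\mu^{-2}\hat E(\chi)+(\mu'')^{-1}\hat E(\chi)$ and $\mu^{-2}\hat E(\chi)+(\mu')^{-1}\hat E(\chi)$, respectively. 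Taking $\mu'=\mu''$, this gives
\begin{equation*}
V\lesssim \mu^3(\mu'')^2 V^2+\psi_\gamma\!\big((\mu^{-2}+(\mu'')^{-1})\hat E(\chi)\big)+(\mu^{-2}+(\mu'')^{-1})\hat E(\chi).
\end{equation*}

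To optimize, I equalize $\mu^{-2}\sim(\mu'')^{-1}$, i.e.\ $\mu''\sim\mu^2$, and balance the first term against the target $V$ by requiring $\mu^3(\mu'')^2V\sim 1$, which leads to $\mu\sim V^{-1/7}$ and $\mu''\sim V^{-2/7}$, so that $\mu^{-2}\sim(\mu'')^{-1}\sim V^{2/7}$. In this regime, if we are already at the expected scaling $\hat E(\chi)\sim V^{5/7}$, the argument of $\psi_\gamma$ is of order $V\gg 1$, so $\psi_\gamma$ acts as the identity. Rearranging then yields $V\lesssim V^{2/7}\hat E(\chi)$, i.e.\ $\hat E(\chi)\gtrsim V^{5/7}$.

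The one subtle point is the nonlinearity $\psi_\gamma(z)=\max\{z,z^{1-\gamma}\}$ in Lemma \ref{lem:commutator2}, which behaves sublinearly for $z<1$. This is handled by a short dichotomy: either the argument exceeds $1$ and the linear case applies as above, or it does not, in which case one obtains $V\lesssim (V^{2/7}\hat E(\chi))^{1-\gamma}$ and hence $\hat E(\chi)\gtrsim V^{1/(1-\gamma)-2/7}$, an even better lower bound provided $\gamma$ is chosen sufficiently small; since $1/(1-\gamma)-2/7>5/7$ for small $\gamma>0$, this sublinear branch is actually inconsistent with the assumption $V^{2/7}\hat E(\chi)<1$ for $V>1$, so only the linear regime is effective. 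The main technical obstacle is thus bookkeeping: one must track the admissibility conditions on $\mu,\mu',\mu''$ and ensure $\tilde\mu<1$, but no new ideas beyond those already present in \cite{RT22} are needed.
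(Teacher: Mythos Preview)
Your proposal is correct and follows essentially the same route as the paper: split $\|\chi_{1,1}\|_{L^2}^2$ into a low-frequency part controlled by Lemma~\ref{lem:low_freq} and a high-frequency part handled via the commutator estimate Lemma~\ref{lem:commutator2} (with $\mu'=\mu''$) combined with Lemma~\ref{lem:control-cones1}, then optimize to reach $\mu\sim V^{-1/7}$, $\mu''\sim V^{-2/7}$. The only cosmetic differences are that the paper absorbs the low-frequency term first and then optimizes in $\mu$ (arriving at the same parameters), and that the paper disposes of the sublinear branch of $\psi_\gamma$ by invoking the isoperimetric bound $\hat E(\chi)\gtrsim V^{1/2}\geq 1$ rather than your dichotomy; both arguments are valid and equivalent in effect.
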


\begin{proof}
Let $\mu,\mu_2 \in (0,1)$ be arbitrary constants to be determined and let $\mu_3=M\mu\mu_2$ for some $M>1$ depending on $g$.
We begin by observing that
$$
V= \|\chi_{1,1}\|_{L^2}^2\le 2\|\chi_{1,\mu,\mu_3}(D)\chi_{1,1}\|_{L^2}^2+2\|\chi_{1,1}-\chi_{1,\mu,\mu_3}(D)\chi_{1,1}\|_{L^2}^2.
$$
Applying Lemma \ref{lem:commutator2} with $\mu'=\mu''=\mu_2$ thanks to relation \eqref{eq:nonlinear} and by Lemmas \ref{lem:control-cones1} and \ref{lem:low_freq} we then arrive at
\begin{align*}
V &\le 8\mu\mu_3^2 V^2+C\|\chi_{1,1}-\chi_{1,\mu,\mu_2}(D)\chi_{1,1}\|_{L^2}^2+C_\gamma\psi_\gamma\big(\|\chi_{2,2}-\chi_{2,\mu,\mu_2}(D)\chi_{2,2}\|_{L^2}^2\big) \\
& \le 8\mu\mu_3^2 V^2 + C_\gamma(\mu^{-2}+\mu_2^{-1})\psi_\gamma\big(\hat E(\chi)\big).
\end{align*}
Now, recalling that $\mu_3\sim\mu\mu_2$, we choose $\mu_2$ such that $8\mu^3\mu_2^2V^2=\frac{V}{2}$, that is $\mu_2\sim V^{-\frac{1}{2}}\mu^{-\frac{3}{2}}$.
Such a choice yields
$$
V\le C_\gamma(\mu^{-2}+V^\frac{1}{2}\mu^\frac{3}{2})\psi_\gamma\big(\hat E(\chi)\big),
$$
where we have possibly increased the constant $C_\gamma$.
Optimizing we choose $\mu\sim V^{-\frac{1}{7}}$ and therefore obtain that
$$
V\le C_\gamma V^\frac{2}{7}\psi_{\gamma}\big(\hat E(\chi)\big)
$$
which implies
\begin{align}
\label{eq:final_nearly}
V^\frac{5}{7}\lesssim\psi_{\gamma}\big(\hat E(\chi)\big).
\end{align}
From the condition $V>1$ and the isoperimetric estimate $\hat E(\chi)\gtrsim V^\frac{1}{2}$. 
As a consequence, $\psi_{\gamma}\big(\hat E(\chi)\big) \lesssim \hat E(\chi)$ and \eqref{eq:final_nearly} implies the desired lower bound.
\end{proof}

\subsection{Upper bounds}

We next turn to an upper bound construction in the large volume regime for which we show a matching upper bound.

\begin{prop}\label{prop:3wells-ub}
Let $\hat E(u,\chi)$ be as in \eqref{eq:en-scaled}, let $K$ be as in \eqref{eq:4_wells} and
let $V>1$ be given.
Then there exist $\Omega\subset\R^2$ compact with Lipschitz boundary and $|\Omega|=V$, $u\in W_0^{1,\infty}(\Omega;\R^2)$ and $\chi\in BV(\R^2;K_0)$ with $\supp(\chi)=\Omega$ such that
$$
\hat E(u,\chi)\lesssim V^\frac{5}{7}.
$$
\end{prop}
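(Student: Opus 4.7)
The plan is to build an explicit admissible pair $(u,\chi)$ supported in a lens-shaped domain, directly extending the $2+1$-construction of Proposition \ref{prop:2wells-ub} by inserting the second-level laminate in the orthogonal direction. I would take $\Omega = \conv\{(\pm L/2,0),(0,\pm H/2)\}$ with $|\Omega| = LH/2 = V$, together with an internal length scale $\ell$ to be optimized. Inside the left triangular super-phase I would alternate $A_1$ (on horizontal strips of total density $1/3$) and $A_2$ (density $2/3$) with period $\ell$ in $x_2$; the right super-phase would carry the analogous laminate of $A_3,A_4$. The resulting $\chi\in BV(\R^2;K_0)$ satisfies $\supp(\chi)=\Omega$, and by construction the averaged gradient in each super-phase equals $\pm e_1\otimes e_1$, so the outer first-level structure is identical to that of the $2+1$-case.

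For $u_1$ I would reuse the diamond cut-off from Proposition \ref{prop:2wells-ub} (with $\lambda=1/2$), which is piecewise linear, vanishes on $\partial\Omega$ and matches the first-level $\chi_{1,1}$ up to an $x_2$-derivative of order $L/H$. This contributes $\int |\partial_2 u_1|^2\,dx \lesssim (L/H)^2\cdot LH = L^3/H$ to the elastic energy. For $u_2$, the target is the bulk sawtooth $\tilde u_2(x_2)$ of amplitude $O(\ell)$ with $\partial_2 \tilde u_2 = \chi_{2,2}^{\text{bulk}}$, which must be interpolated to zero on $\partial\Omega$. Since the diamond has $x_1$-width $w(x_2) = L(1-2|x_2|/H)\lesssim L$ and the amplitude to interpolate is only $O(\ell)$, this can in principle be done with an $x_1$-gradient of order $\ell/L$, yielding an elastic contribution of $(\ell/L)^2 \cdot LH = \ell^2 H/L$.

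For the surface energy I would collect the length of the interfaces: the first-level vertical interface $\{x_1=0\}\cap\Omega$ contributes $\lesssim H$, the horizontal interfaces of the second-level laminate within the two triangular super-phases contribute $\int_{-H/2}^{H/2} L(1-2|x_2|/H)/\ell\,dx_2 \sim LH/\ell$, and the outer boundary $\partial\Omega$ contributes $\sim H$ since $H\geq L$. Summing, $E_{surf}(\chi)\lesssim H + LH/\ell$, so the combined estimate becomes
\[
\hat E(u,\chi) \lesssim \frac{L^3}{H} + H + \frac{LH}{\ell} + \frac{\ell^2 H}{L}.
\]
Balancing first $LH/\ell \sim \ell^2H/L$ gives $\ell \sim L^{2/3}$, reducing the bound to $L^3/H + H + L^{1/3}H$. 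Balancing $L^3/H \sim L^{1/3}H$ then gives $L \sim H^{3/4}$, and $LH \sim V$ yields $H\sim V^{4/7}$ and $\hat E(u,\chi)\lesssim V^{5/7}$.

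The main obstacle is producing an explicit $u_2\in W_0^{1,\infty}(\Omega)$ that achieves the elastic cost $\ell^2 H/L$. A naive multiplicative cut-off such as $u_2=\tilde u_2(x_2)\phi(|x_1|/w(x_2))$ with $\phi$ vanishing at $|\cdot|=1/2$ fails, since the mismatch $|\partial_2 u_2-\chi_{2,2}|^2 \sim (\chi_{2,2}^{\text{bulk}})^2(\phi-1)^2$ is of order one on a region of area $\sim LH$, giving the suboptimal cost $\sim LH$. The correct construction must be arranged so that the $x_2$-derivative of $u_2$ still tracks $\chi_{2,2}^{\text{bulk}}$ pointwise while the $x_1$-dependence smoothly brings $u_2$ from the bulk amplitude $\sim \ell$ at $x_1=0$ down to zero at $|x_1|=w(x_2)/2$ with gradient $\sim \ell/L$; a Lipschitz realization can be obtained either by truncating a solution of $\Delta u_2 = \partial_2 \chi_{2,2}$ with zero boundary data in $\Omega$ (matching the Fourier upper bound implicit in Lemma \ref{lem:el-chi-fourier}), or by a piecewise-linear construction adapted strip-by-strip to the triangular super-phases. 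Verifying the pointwise gradient estimates and the Lipschitz boundary matching constitutes the most delicate part of the argument.
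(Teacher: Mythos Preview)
Your energy balance and optimization are exactly the ones the paper arrives at, but the term $\ell^2 H/L$ that you assign to the $u_2$-contribution is not achievable with a second-level laminate of \emph{uniform} period $\ell$. The obstruction is the lateral boundary: $\chi_{2,2}$ jumps (in the $x_1$-direction) from values in $\{-2,1,2,-1\}$ to $0$ across the slanted sides of the diamond, and this forces a boundary-layer cost of order $H\ell$ no matter how $u_2$ is chosen. Indeed, the very Fourier formula you invoke gives, for $\chi_{2,2}=p(x_2)\mathbf{1}_\Omega$ with $p$ $\ell$-periodic and mean zero,
\[
\int_{\R^2}\frac{k_1^2}{|k|^2}\,|\hat\chi_{2,2}|^2\,dk \;\sim\; H\ell,
\]
since the slow $1/k_1$ tail of $\widehat{\mathbf{1}_\Omega}$ places $O(1)$ mass of $|\hat\chi_{2,2}|^2$ at frequencies $|k_1|\gtrsim 1/\ell$, where the multiplier $k_1^2/|k|^2$ is of order one. (Equivalently: the $H^1_0$-minimizer of $\int|\nabla u_2-\chi_{2,2}e_2|^2$ has exponential boundary layers of width $\sim\ell$ at the sides, costing $\sim H\ell$.) With $H\ell$ in place of $\ell^2 H/L$ the optimization yields only $\hat E\lesssim V^{7/9}$, missing the target. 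Note also that your informal description of the ``correct'' $u_2$ is internally inconsistent: if $\partial_2 u_2=\chi_{2,2}^{\text{bulk}}$ pointwise then $u_2(x_1,x_2)=\tilde u_2(x_2)+f(x_1)$, which cannot vanish on a slanted boundary.

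What actually produces a contribution $r^2 H/L$ is \emph{branching} of the second-level laminate toward the lateral boundary, and this is precisely the paper's construction: each triangular super-phase is tiled by a self-similar family of wide, short rectangles $R_j$ with fixed aspect ratio $r_j/\ell_j\sim r/L$, and inside each $R_j$ a Kohn--M\"uller branched laminate (from \cite[Lemma~3.2]{RT22}) is inserted, matching the affine data $B_jx$ on $\partial R_j$ and costing $r_j^3/\ell_j+\ell_j$. Summing over $j$ gives $r^2H/L+HL/r$, after which your optimization goes through verbatim. Your proposed fixes do not avoid this: solving $\Delta u_2=\partial_2\chi_{2,2}$ with zero Dirichlet data is the energy-optimal $u_2$ for the \emph{given} $\chi_{2,2}$ and already costs $\sim H\ell$; and a ``strip-by-strip'' construction that genuinely reaches $\ell^2 H/L$ must refine the period toward the sides and is, once written out, the branching construction.
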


As above, we again present two possible proofs of this result: A construction in a thin rectangle with double branching (given in Section \ref{app:second} in the Appendix) and a construction in a thin diamond/lens with simple branching. The latter exploits the more flexible domain geometry by consisting of a lens/diamond shape.
We highlight that the self-similar argument exploited for building the geometry of the construction below takes its inspiration from those used in \cite{KW16,KKO13}.

\begin{proof}[Proof of Proposition \ref{prop:3wells-ub} by means of a construction with a lens-type shape]

Consider the rhombus
$$
\Omega=\conv\Big(\Big\{\Big(-\frac{L}{2},0\Big),\Big(0,\frac{H}{2}\Big),\Big(0,-\frac{H}{2}\Big),\Big(0,\frac{L}{2}\Big)\Big\}\Big).
$$

As a first step we consider the construction given in Proposition \ref{prop:2wells-ub} with gradients
$$
B_1=\frac{1}{3}A_1+\frac{2}{3}A_2
\quad \text{and} \quad
B_2=\frac{1}{3}A_3+\frac{2}{3}A_4.
$$
This corresponds to the macroscopic deformation. This macroscopic deformation will be replaced by a microscopic deformation which is achieved by a construction of fine scale oscillations between phases $A_1$ and $A_2$ on the left part of the domain 
and between $A_3$ and $A_4$ on the right part. 
We work in several steps.


\emph{Step 1: Definition of a macroscopic state.}
We consider $w$ to be obtained analogously as in the proof of Proposition \ref{prop:2wells-ub} with $K$ replaced by the set $\{B_1,B_2\}$, that is $w=(w_1,0)$ with
$$
w_1(x_1,x_2)=\begin{cases}
-\frac{L}{2}+|x_1|+\frac{L}{H}|x_2| & x\in\Omega, \\
0 & \text{otherwise.}
\end{cases}
$$
The function $w$ will play the role of a macroscopic state. We highlight that the construction from Proposition \ref{prop:2wells-ub} does not give deformations exactly equal to $B_1, B_2$ but involves slight perturbations of these (see the discussion in Step 4 below).

\emph{Step 2: Branching building block.}
We now define a single tree of a branching construction on a general rectangle $R:=[0,\ell]\times[0,h]$ with $\ell>4 h$.
By applying \cite[Lemma 3.2]{RT22} with inverted roles of $x_1$ and $x_2$ and with $N=1$, we obtain $v^{j,R}\in W^{1,\infty}(R;\R^2)$ such that $v^{j,R}=B_j x$ on $\p R$ and
\begin{equation}\label{eq:bra-block}
\hat E(v^{j,R},\chi^{j,R})\lesssim \frac{h^3}{\ell}+\ell,
\end{equation}
where $\chi^{j,R}$ is the projection of $\nabla v^{j,R}$ onto the set $\{A_1,A_2\}$ or $\{A_3,A_4\}$ if $j=1$ or $2$ respectively.
Note that, by translation invariance, the same construction (still matching $B_j x$ at the boundary) can be obtained in rectangles $x+R$ for any $x\in\R^2$.

\begin{figure}[t]
\includegraphics{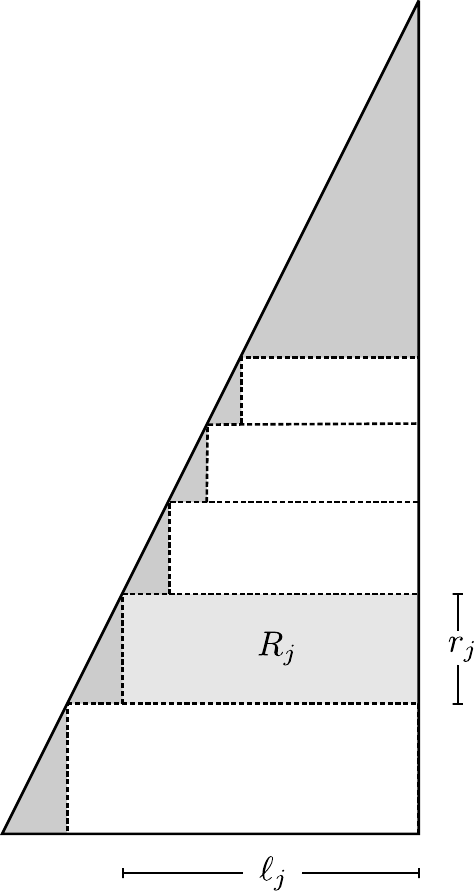}
\caption{Subdivision of $T$ into rectangles $R_j$.
The shaded region is $T\setminus T_0$.}
\label{fig:lens4+1}
\end{figure}

\emph{Step 3: Subdivision of the domains into rectangles.}
We subdivide the domain $\Omega$ into rectangles having the same ratio between width and height.
The correct ratio will be then obtained in Step 5 via optimization.
We work in the triangular subdomain $T=\conv\big(\big\{(-\frac{L}{2},0),(0,\frac{H}{2}),(0,0)\big\}\big)$ recovering the full subdivision symmetrically.

We fix $r>0$ a small parameter to be determined later.
We cut $T$ into horizontal slices at height $h_j$, with $h_0=0$, $h_1=r$ and $h_{j+1}>h_j$.
In each of these slices we consider the maximal rectangle contained in $T$, that is
\begin{equation}\label{eq:rectangles}
R_j:=[-\ell_j,0]\times[h_j,h_j+r_j],
\quad \text{with} \quad
r_j:=h_{j+1}-h_j,
\quad \ell_j:=\sup\{\ell\,:\, (-\ell, h_{j+1})\in T\}.
\end{equation}
We choose $r_j$ so that the rectangles $R_j$ have the same ratio between width and height.
Solving recursively $r_j=\frac{r}{\ell_0}\ell_j$, we find
$$
r_j=r\Big(1-\frac{2}{H}r\Big)^j
\quad \text{and} \quad
\ell_{j}=\frac{L}{2}\Big(1-\frac{2}{H}r\Big)^{j+1}.
$$
We make this subdivision for $0\le j\le j_0$, with $j_0$ being the largest index such that $h_{j_0}<\frac{H-L}{2}$.
In the end, we also define $T_0=\bigcup_{j=0}^{j_0} R_j$ (see Figure \ref{fig:lens4+1}).

\emph{Step 4: Definition of $u$.}
We define $u$ to be a fine-scale branching oscillation inside $R_j$ and being equal to the macroscopic state $w$ on $T\setminus T_0$.
To do so, we slightly modify the functions obtained in Step 2 with an affine perturbation in order to match the macroscopic state $w$ at $\p R_j$ (which is close to an affine function with gradient $B_1$ or $B_2$ but is not exactly equal to one these, see the comment in Step 1 above).
Thus, for every $x\in T$ we define
$$
u(x)=\begin{cases}
v^{1,R_j}(x)+(\frac{L}{H}x_2-\frac{L}{2},0) & x\in R_j,\, 0\le j\le j_0, \\
w(x) & \text{otherwise.}
\end{cases}
$$
Reasoning symmetrically we obtain a construction on the whole domain $\Omega$.
As usual we will denote with $\chi$ the projection of $\nabla u$ onto $K_0$.

\emph{Step 5: Energetic cost and optimization of parameters.}
By symmetry we can restrict to computing the total energy on $T$.
Splitting the contributions on $R_j$ and $T\setminus T_0$ we obtain
\begin{equation}\label{eq:lam+bra-en}
\begin{split}
\hat E(u,\chi) & \lesssim \int_{T\setminus T_0} \dist^2(\nabla w,K_0)dx + |D\chi|(T\setminus T_0) \\
& \quad +\sum_{j=0}^{j_0} \Big(\int_{R_j} \dist^2\Big(\nabla v^{1,R_j}+\frac{L}{H}e_1\otimes e_2,K_0\Big)dx + |D\chi^{1,R_j}|(R_j)\Big).
\end{split}
\end{equation}
We study the two contributions on the right-hand-side of \eqref{eq:lam+bra-en} separately, starting from the first one.

The term $|D\chi|(T\setminus\overline{T_0})$ is proportional to the perimeter of $\Omega$, hence $|D\chi|(T\setminus\overline{T_0})\lesssim H$.
We now need to control the measure of $T\setminus T_0$ (see Figure \ref{fig:lens4+1}) which can be estimated as follows;
$$
|T\setminus T_0|\lesssim r^2\frac{L}{H}\sum_{j\ge0}\Big(1-\frac{2}{H}r\Big)^{2j}+\frac{L^3}{H} \lesssim rL+\frac{L^3}{H}.
$$
From this and the fact that $\dist(\nabla w,K)\lesssim 1$, we obtain
\begin{equation}\label{eq:RHS1}
\int_{T\setminus T_0} \dist^2(\nabla w,K_0)dx + |D\chi|(T\setminus \overline{T_0}) \lesssim rL+\frac{L^3}{H}+H.
\end{equation}
We analyze the last term in \eqref{eq:lam+bra-en}.
Since the matrix $\frac{L}{H}e_1\otimes e_2$ does not affect the projection of $\nabla v^{1,R_j}$ onto $K_0$, from \eqref{eq:bra-block} for every $j$ we have
$$
\int_{R_j} \dist^2\Big(\nabla v^{1,R_j}+\frac{L}{H}e_1\otimes e_2,K\Big)dx + |D\chi^{1,R_j}|(R_j) \lesssim \frac{r_j^3}{\ell_j}+|R_j|\frac{L^2}{H^2}+\ell_j.
$$
By summing over $j$ we obtain
\begin{multline}\label{eq:RHS2}
\sum_{j=0}^{j_0} \Big(\int_{R_j} \dist^2\Big(\nabla v^{1,R_j}+\frac{L}{H}e_1\otimes e_2,K\Big)dx + |D\chi^{1,R_j}|(R_j)\Big) \\
\lesssim \sum_{j\ge0}\Big(\frac{r^3}{L}\Big(1-\frac{2}{H}r\Big)^{2j}+L\Big(1-\frac{2}{H}r\Big)^j\Big)+|T|\frac{L^2}{H^2}
\lesssim r^2 \frac{H}{L}+\frac{HL}{r}+\frac{L^3}{H}.
\end{multline}
Inserting \eqref{eq:RHS1} and \eqref{eq:RHS2} into \eqref{eq:lam+bra-en} we infer that
\begin{equation}\label{eq:tot-en-lam+bra}
\hat E(u,\chi) \lesssim rL+H+\frac{L^3}{H}+r^2\frac{H}{L}+\frac{HL}{r}.
\end{equation}
Optimizing the expression above in $r$ we get $r\sim L^\frac{2}{3}$ and $H\sim L^\frac{4}{3}$.
Notice that the choice $r\sim L^\frac{2}{3}$ is compatible with the condition $\ell>4h$ on rectangles $R$ of Step 2 (which for $R=R_j$ corresponds to $\ell_j>4r_j$) and therefore the construction above is well-defined.
The relation $HL\sim V$ yields $L\sim V^\frac{3}{7}$, $H\sim V^\frac{4}{7}$ and the result follows.
\end{proof}

\subsection{A three-dimensional analogue}
\label{sec:3D}

An interesting three-dimensional modification of the previous setting is the following: We consider $K=\{A_1,A_2,A_3,A_4\}\subset\R^{3\times3}$ with
\begin{align}
\label{eq:3D_analogue}
A_1= \begin{pmatrix} -1 & 0 & 0 \\ 0 & -2 & 0 \\ 0 & 0 & 0\end{pmatrix},\,
A_2= \begin{pmatrix} -1 & 0 & 0 \\ 0 & 1 & 0 \\ 0 & 0 & 0\end{pmatrix},\,
A_3= \begin{pmatrix} 1 & 0 & 0 \\ 0 & 0 & 0 \\ 0 & 0 & 2 \end{pmatrix},\,
A_4= \begin{pmatrix} 1 & 0 & 0 \\ 0 & 0 & 0 \\ 0 & 0 & -1 \end{pmatrix}.
\end{align}

\begin{figure}[t]
\includegraphics{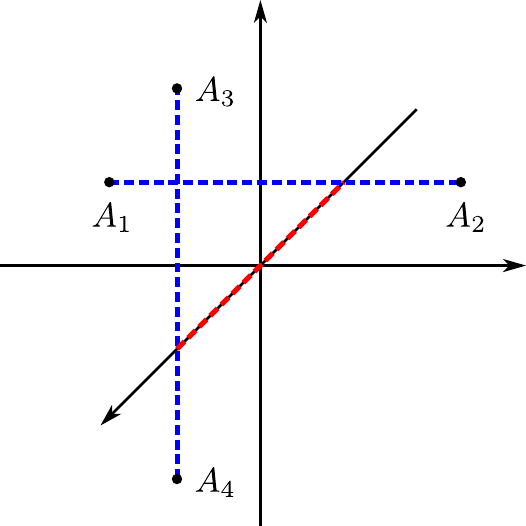}
\caption{The matrix set $K$ defined above: In blue the set $K^{(1)}\setminus K$, in red $K^{(2)}\setminus K^{(1)}$.}
\label{fig:4wells-3Da}
\end{figure}

First-order laminates are the segments $K^{(1)}=\conv(A_1,A_2)\cup\conv(A_3,A_4)$, whereas the second-order laminates consists of
$$
K^{(2)}\setminus K^{(1)}=\left\{\begin{pmatrix} \mu & 0 & 0 \\ 0 & 0 & 0 \\ 0 & 0 & 0\end{pmatrix} \,:\, |\mu|<1\right\}
$$
which in particular contains $\textbf{0}$, as can be seen in Figure \ref{fig:4wells-3Da}.

In this case we have $\chi_{1,1}=-\chi_1-\chi_2+\chi_3+\chi_4$, $\chi_{2,2}=-2\chi_1+\chi_2$ and $\chi_{3,3}=2\chi_3-\chi_4$ and therefore we have the nonlinear relation
\begin{equation}\label{eq:nonlinear-3D}
\chi_{1,1}=g(\chi_{2,2}+\chi_{3,3})
\end{equation}
with $g$ a polynomial. 
Because of the structure of the wells, we can choose $g$ to be the same polynomial as in \eqref{eq:nonlinear}.

For this setting, working analogously as done for the two-dimensional case, we have the following lower scaling bounds.

\begin{prop}
\label{prop:3D_analogue}
Let $\hat E(V)$ be as in \eqref{eq:en-vol-scaled} and let $K$ be as in \eqref{eq:3D_analogue}.
Then there exists a positive constant $C>0$ depending on $K$ such that for every $V>0$ there holds
\begin{equation}\label{eq:thm-4wells3D-1}
C \hat r(V) \le \hat E(V), 
\quad \text{where} \quad
\hat r(V)= \begin{cases}
V^\frac{2}{3} & \text{if } V\le1, \\
V^\frac{9}{11} & \text{if } V>1.
\end{cases}
\end{equation}
Moreover, for every $\epsilon>0$ there holds
\begin{equation}\label{eq:thm-4wells3D-2}
C r_\epsilon(V) \le E_\epsilon(V), 
\quad \text{where} \quad
r_\epsilon(V)=\begin{cases}
\epsilon V^\frac{2}{3} & \text{if } V\le\epsilon^3, \\
\epsilon^\frac{6}{11}V^\frac{9}{11} & \text{if } V>\epsilon^3.
\end{cases}
\end{equation}
\end{prop}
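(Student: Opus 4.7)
The proof parallels that of Proposition \ref{prop:3wells-lb}, but in three dimensions and exploiting the polynomial relation \eqref{eq:nonlinear-3D}, which expresses $\chi_{1,1}$ as a cubic polynomial in the single scalar quantity $\chi_{2,2}+\chi_{3,3}$ (so in the language of Lemma \ref{lem:commutator2}, we apply the lemma with $\ell=1$, $\lambda_2=\lambda_3=1$ and $g$ of degree three).

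\emph{Small-volume regime.} For $V\le 1$ the bound $\hat E(V)\gtrsim V^{2/3}$ is the general isoperimetric estimate \eqref{eq:small-vol} with $n=3$ recalled in Section \ref{sec:small-reg}. This handles \eqref{eq:thm-4wells3D-1} in that regime, and \eqref{eq:thm-4wells3D-2} follows after undoing the normalization via \eqref{eq:rescaling}.

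\emph{Large-volume regime.} For $V>1$, I would start from $V\lesssim\|\chi_{1,1}\|_{L^2}^2$ and split
\[
\|\chi_{1,1}\|_{L^2}^2 \le 2\|\chi_{1,\mu,\mu_3}(D)\chi_{1,1}\|_{L^2}^2 + 2\|\chi_{1,1}-\chi_{1,\mu,\mu_3}(D)\chi_{1,1}\|_{L^2}^2,
\]
with free parameters $0<\mu<1$, $\mu_2>0$ and $\mu_3=M\mu\mu_2$ as dictated by Lemma \ref{lem:commutator2}. The low-frequency term is controlled by Lemma \ref{lem:low_freq} (with $n=3$) giving $\|\chi_{1,\mu,\mu_3}(D)\chi_{1,1}\|_{L^2}^2\lesssim \mu_3^{3}\mu^{2}V^{2}\sim\mu^{5}\mu_2^{3}V^{2}$. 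For the high-frequency term, I apply Lemma \ref{lem:commutator2} with $\mu'=\mu''=\mu_2$, using \eqref{eq:nonlinear-3D}, to get
\[
\|\chi_{1,1}-\chi_{1,\mu,\mu_3}(D)\chi_{1,1}\|_{L^2}^2 \lesssim \sum_{j=2,3}\psi_\gamma\!\bigl(\|\chi_{j,j}-\chi_{j,\mu,\mu_2}(D)\chi_{j,j}\|_{L^2}^2\bigr)+\|\chi_{1,1}-\chi_{1,\mu,\mu_2}(D)\chi_{1,1}\|_{L^2}^2,
\]
and then bound each term on the right via Lemma \ref{lem:control-cones1} by $\psi_\gamma\bigl((\mu^{-2}+\mu_2^{-1})\hat E(\chi)\bigr)$. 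Combining, the key inequality becomes
\[
V\lesssim \mu^{5}\mu_2^{3}V^{2}+(\mu^{-2}+\mu_2^{-1})\psi_\gamma\bigl(\hat E(\chi)\bigr).
\]

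\emph{Optimization.} I would absorb the low-frequency term into the left-hand side by choosing $\mu_2\sim V^{-1/3}\mu^{-5/3}$ (so that $\mu^{5}\mu_2^{3}V^{2}\le V/2$), giving the two-term bound $V\lesssim (\mu^{-2}+V^{1/3}\mu^{5/3})\psi_\gamma\bigl(\hat E(\chi)\bigr)$, and then balance the two terms via $\mu^{-2}=V^{1/3}\mu^{5/3}$, i.e.\ $\mu\sim V^{-1/11}$. This yields $V^{9/11}\lesssim \psi_\gamma\bigl(\hat E(\chi)\bigr)$. Since $V>1$ forces $\hat E(\chi)\gtrsim V^{2/3}\gtrsim 1$ by the isoperimetric bound, we have $\psi_\gamma\bigl(\hat E(\chi)\bigr)\lesssim \hat E(\chi)$ (choosing $\gamma$ small enough if needed), so $\hat E(V)\gtrsim V^{9/11}$. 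The $\epsilon$-dependent statement \eqref{eq:thm-4wells3D-2} then follows by the rescaling \eqref{eq:rescaling}, which turns the thresholds $V\sim 1$ and $V\sim V^{9/11}$ into $V\sim\epsilon^{3}$ and $\epsilon^{6/11}V^{9/11}$, respectively.

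The main technical point (rather than a real obstacle) is verifying that the polynomial relation \eqref{eq:nonlinear-3D}, which is a polynomial in the single linear combination $\chi_{2,2}+\chi_{3,3}$, fits the hypothesis of Lemma \ref{lem:commutator2} and that the resulting $\psi_\gamma$ can be discarded in the range $V>1$; both are essentially automatic here. The new feature compared to Proposition \ref{prop:3wells-lb} is the dimensional factor $(\mu')^{n}\mu^{n-1}=\mu_3^{3}\mu^{2}$ from Lemma \ref{lem:low_freq}, which is what produces the exponent $9/11$ in place of the two-dimensional $5/7$.
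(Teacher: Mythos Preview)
Your proposal is correct and follows essentially the same route as the paper's proof: the same splitting via Lemmas \ref{lem:low_freq}, \ref{lem:commutator2}, and \ref{lem:control-cones1}, the same choice $\mu_2\sim V^{-1/3}\mu^{-5/3}$ and $\mu\sim V^{-1/11}$, and the same use of the isoperimetric bound to discard $\psi_\gamma$. The only cosmetic difference is that the paper invokes the isoperimetric estimate to drop $\psi_\gamma$ before the optimization rather than after, which is immaterial.
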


We remark that the lower scaling bound in \eqref{eq:thm-4wells3D-1} and \eqref{eq:thm-4wells3D-2} is the same as the one obtained in \cite{KKO13} in the case of nucleation for the geometrically linearized cubic-to-tetragonal phase transition. In the geometrically linearized cubic-to-tetragonal phase transition the zero matrix also is a second-order laminate. The geometrically linearized cubic-to-tetragonal phase transition includes a geometrically linearized version of frame-indifference and thus $Skew(3)$-invariance. While our simplified model does not include this, it is rich enough to provide the same lower bound scaling behaviour. It could thus serve as a model problem in which one can possibly study finer properties of this phase transformation.

We expect that upper-bound constructions matching the lower-scaling bounds can be obtained working in the same spirit of \cite[Section 6]{KKO13}.
Since this is not the main goal of our work we omit to study it.

\begin{proof}
The lower bound is proved analogously as done in the two-dimensional case.
Let $0<\mu<1$, $\mu_2,\mu_3>0$ be as in the proof of Proposition \ref{prop:3wells-lb}.
From Lemmas \ref{lem:control-cones1}--\ref{lem:low_freq} and relation \eqref{eq:nonlinear} we have
$$
V \le 8\mu^2\mu_3^3 V^2+C_\gamma(\mu^{-2}+\mu_2^{-1})\psi_\gamma\big(\hat E(\chi)\big).
$$
Again, from the preliminary, isoperimetric lower bound $\hat E(\chi)\gtrsim V^\frac{2}{3}$ and the fact that $V\geq 1$, by fixing $\gamma$, from the inequality above we obtain
$$
V \lesssim \mu^2\mu_3^3 V^2+(\mu^{-2}+\mu_2^{-1})\hat E(\chi).
$$
Since $\mu_3\sim\mu\mu_2$, we choose $\mu_2$ sufficiently small such that $\mu^5\mu_2^3V^2\sim V$, that is $\mu_2\sim V^{-\frac{1}{3}}\mu^{-\frac{5}{3}}$.
This implies
$$
V\lesssim (\mu^{-2}+V^\frac{1}{3}\mu^\frac{5}{3})\hat E(\chi).
$$
Optimizing, we choose $\mu\sim V^{-\frac{1}{11}}$ and therefore obtain
$$
V\lesssim V^\frac{2}{11}\hat E(\chi)
$$
which gives the claim.

As above, the $\epsilon$-scaling behaviour follows by rescaling as in Section \ref{rmk:normal}.
\end{proof}

\section{An example with third order lamination}
\label{sec:third_order}

In this section, we provide the proof of Proposition \ref{prop:lower_3}.
As in the previous sections, it suffices to deduce the non-dimensionalized bounds. These read as follows:

\begin{prop}[Lower bounds for third order laminates in three dimensions]
\label{prop:3order_lower1}
Let $\hat{E}(V)$ be as in \eqref{eq:en-vol-scaled} and let $K$ be as in \eqref{eq:three_dim_3rd}.
Then, there exists a positive constant $C>0$ depending on $K$ such that for every $V>0$ it holds
\begin{align*}
\hat{E}(V) \geq C \hat{r}(V), \mbox{ where } 
\hat{r}(V)= 
\left\{
\begin{array}{ll} 
 V^{\frac{2}{3}}, & \ V \leq 1,\\
V^{\frac{6}{7}}, & \ V > 1.
\end{array} \right.
\end{align*}
\end{prop}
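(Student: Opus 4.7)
The plan is to handle the two regimes of $\hat r(V)$ separately. For $V\leq 1$, the isoperimetric inequality applied to $\supp(\chi)$ immediately gives $\hat E(\chi)\gtrsim |D\chi|(\R^3)\gtrsim V^{2/3}$, as recalled in Section \ref{sec:small-reg}. For the large-volume regime $V>1$, I would follow the Fourier-analytic strategy of Propositions \ref{prop:2wells-lb} and \ref{prop:3wells-lb}, but now performing two iterations of Lemma \ref{lem:commutator2} to reflect the third order of lamination of $\textbf{0}$ in $K^{lc}$. The $(\epsilon,V)$-statement of Proposition \ref{prop:lower_3} then follows from \eqref{eq:rescaling}.

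The first step is to record the polynomial relations dictated by the wells \eqref{eq:three_dim_3rd}. Reading off the values $(\chi_{1,1},\chi_{2,2},\chi_{3,3})$ on the eight matrices and using that $\chi_{j,j}=0$ off $\supp(\chi)$, one sees that $\chi_{1,1}$ takes nine distinct values, which via Lagrange interpolation yields a polynomial $g_{21}$, with $g_{21}(0)=0$, such that $\chi_{2,2}=g_{21}(\chi_{1,1})$ pointwise on $\R^3$. Similarly, grouping matrices by the value of $\chi_{2,2}\in\{-2,-1,0,1,2\}$ produces a polynomial $g_{32}$ of degree at most four with $g_{32}(0)=0$ and $\chi_{3,3}=g_{32}(\chi_{2,2})$. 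This gives the chain $\chi_{3,3}\leftarrow\chi_{2,2}\leftarrow\chi_{1,1}$ needed to iterate Lemma \ref{lem:commutator2} twice. Note that there is no corresponding polynomial relation expressing $\chi_{1,1}$ as a function of $\chi_{2,2}$ or $(\chi_{2,2},\chi_{3,3})$, which is why $\chi_{1,1}$ must be treated as the \emph{base} of the chain.

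Next, since $\chi_{3,3}\in\{-1,0,1\}$ vanishes exactly outside $\supp(\chi)$, one has $\|\chi_{3,3}\|_{L^2}^2=V=\|\chi_{3,3}\|_{L^1}$. For parameters $\mu,\mu_1\in(0,1)$ to be chosen, define $\mu_2:=M_1\mu\mu_1$ and $\mu_3:=M_2\mu\mu_2\sim \mu^2\mu_1$, with $M_1,M_2$ the constants from Lemma \ref{lem:commutator2} applied to $g_{21}$ and $g_{32}$ respectively. Splitting and invoking Lemma \ref{lem:low_freq} in three dimensions,
\begin{equation*}
V\lesssim \|\chi_{3,\mu,\mu_3}(D)\chi_{3,3}\|_{L^2}^2+\|\chi_{3,3}-\chi_{3,\mu,\mu_3}(D)\chi_{3,3}\|_{L^2}^2\lesssim \mu^8\mu_1^3 V^2+Z_3,
\end{equation*}
while two nested applications of Lemma \ref{lem:commutator2} (first with $\ell=3$, $g=g_{32}$, then with $\ell=2$, $g=g_{21}$) together with Lemma \ref{lem:control-cones1} at the coarser scale $\mu_1$ yield, for fixed $\gamma=\tfrac{1}{2}$, a bound of the form $Z_3\lesssim \psi_\gamma(\psi_\gamma(W))+W$ where $W:=(\mu^{-2}+\mu_1^{-1})\hat E(\chi)$.

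To conclude, I would set $\mu_1=\mu^2$ to balance $\mu^{-2}$ with $\mu_1^{-1}$, and then $\mu\sim V^{-1/14}$ so that $\mu^8\mu_1^3 V^2\leq V/2$ is absorbed to the left-hand side. With these choices, $\mu^{-2}+\mu_1^{-1}\sim V^{1/7}$, and the isoperimetric preliminary $\hat E(\chi)\gtrsim V^{2/3}$ combined with $V>1$ gives $W\gtrsim V^{17/21}\geq 1$; since $\psi_\gamma(z)=z$ on $\{z\geq 1\}$ and $\psi_\gamma$ is monotone, both $\psi_\gamma$'s collapse and the inequality reduces to $V\lesssim V^{1/7}\hat E(\chi)$, i.e.\ $\hat E(\chi)\gtrsim V^{6/7}$. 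The main obstacle is precisely this bookkeeping of the nested $\psi_\gamma$: unlike the single-commutator estimate of the second-order case, a second iteration is required here, and one must verify that the innermost cone-complement norm (equivalently, its explicit Lemma \ref{lem:control-cones1} upper bound $W$) lies in the linear branch of $\psi_\gamma$. This is secured by the preliminary isoperimetric bound together with the specific scaling of $\mu$ and $\mu_1$; for $V$ only slightly larger than $1$ the bound $\hat E(\chi)\gtrsim V^{2/3}$ already dominates $V^{6/7}$ up to a constant, so the $\psi_\gamma$ discussion causes no trouble in a neighbourhood of $V=1$ either.
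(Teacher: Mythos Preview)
Your proposal is correct and follows essentially the same strategy as the paper's proof: localise $\chi_{3,3}$ to a cone of aperture $\sim\mu$ and radius $\sim\mu^2\mu_1$ via two iterations of Lemma~\ref{lem:commutator2}, combine with Lemmas~\ref{lem:control-cones1} and~\ref{lem:low_freq}, and optimise to reach $\mu\sim V^{-1/14}$. Your chain $\chi_{1,1}\to\chi_{2,2}\to\chi_{3,3}$ is a slight streamlining of the paper's version (which records $\chi_{3,3}=f_{1,3}(\chi_{1,1})$ as well, though this relation is not actually used in its Step~3); your a~priori choice $\mu_1=\mu^2$ reproduces exactly the paper's optimised parameters $\mu_2\sim V^{-1/7}$, and your explicit check that $W\gtrsim V^{17/21}\geq 1$ forces the nested $\psi_\gamma$'s into their linear branch is the precise content of the paper's remark that ``by the isoperimetric inequality we may drop $\psi_\gamma$''.
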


The main idea of the argument consists in using that $\chi_{1,1}$ determines $\chi_{2,2}, \chi_{3,3}$
and that $\chi_{2,2}$ determines $\chi_{3,3}$. Thus, an iterated commutator estimate as in Section \ref{sec:prelim} is possible and an optimization argument yields the lower bound.

\begin{proof}
We argue in several steps. As a preliminary observation, we note that it suffices to prove the bounds for $V> 1$ since the small volume setting is a direct consequence of the isoperimetric inequality, see Section \ref{sec:small-reg}. 

\emph{Step 1: First commutator bounds.} From Lemma \ref{lem:control-cones1} we directly obtain that for $\mu_2, \mu \in (0,1)$ to be fixed, it holds that
\begin{align}\label{eq:lb-3order-1}
\sum\limits_{j=1}^{3}\|\chi_{j,j}-\chi_{j,\mu,\mu_2}(D)\chi_{j,j}\|_{L^2}^2 \leq C(\mu^{-2} E_{el}(\chi) + \mu_2^{-1} E_{surf}(\chi)).
\end{align}

\emph{Step 2: $\chi_{1,1}$ determines $\chi_{2,2}$ and $\chi_{3,3}$.}
From the structure of the wells, we observe that $\chi_{2,2} = f_{1,2}(\chi_{1,1})$ and $\chi_{3,3} = f_{1,3}(\chi_{1,1})$ for some polynomials $f_{1,2}, f_{1,3}$ (see Remark \ref{rmk:poly-ex} for an explicit example).
As a consequence, we may invoke Lemma \ref{lem:commutator2} which, combined with \eqref{eq:lb-3order-1}, yields that for $0<\mu_3=M\mu \mu_2<\mu_2$ and for $j\in\{2,3\}$, it holds that
\begin{align*}
\|\chi_{j,j}-\chi_{j,\mu,\mu_3}(D)\chi_{j,j}\|_{L^2}^2 \leq C \psi_{\gamma}\big(\|\chi_{1,1}-\chi_{1,\mu,\mu_2}(D)\chi_{1,1}\|_{L^2}^2\big) + C(\mu^{-2} E_{el}(\chi) + \mu_2^{-1} E_{surf}(\chi)).
\end{align*}

Moreover, due to the fact that $V> 1$ and the isoperimetric inequality, we may drop the function $\psi_{\gamma}$ which yields, again from \eqref{eq:lb-3order-1}, that
\begin{align}\label{eq:lb-3order-2}
\|\chi_{j,j}-\chi_{j,\mu,\mu_3}(D)\chi_{j,j}\|_{L^2}^2 \leq C(\mu^{-2} E_{el}(\chi) + \mu_2^{-1} E_{surf}(\chi)), \ j\in \{2,3\}.
\end{align}

\emph{Step 3: $\chi_{2,2}$ determines $\chi_{3,3}$.} Using that $\chi_{3,3}=f_{2,3}(\chi_{2,2})$ with $f_{2,3}$ a polynomial (see Remark \ref{rmk:poly-ex}), we again invoke Lemma \ref{lem:commutator2} with $\mu'=\mu_2$, $\mu''=\mu_3$ and $\tilde\mu=\mu_4$. Hence, for $0<\mu_4=M\mu \mu_3<\mu_3$ this yields the bounds
\begin{align*}
\|\chi_{3,3}-\chi_{3,\mu,\mu_4}(D)\chi_{3,3}\|_{L^2}^2 &\leq \|\chi_{2,2}-\chi_{2,\mu,\mu_3}(D)\chi_{2,2}\|_{L^2}^2 + C(\mu^{-2} E_{el}(\chi) + \mu_2^{-1} E_{surf}(\chi)) \\
& \le 2C(\mu^{-2} E_{el}(\chi) + \mu_2^{-1} E_{surf}(\chi)),
\end{align*}
where the last inequality is a consequence of \eqref{eq:lb-3order-2}.

\emph{Step 4: Optimization and conclusion.}
With the bounds from the previous steps, we conclude that
\begin{align}
\label{eq:optimize_3}
\begin{split}
V & = \|\chi_{33}\|_{L^2}^2 \\
&\leq 2(\|\chi_{3,\mu,\mu_4}(D)\chi_{3,3}\|_{L^2}^2 + \|\chi_{3,3}-\chi_{3,\mu,\mu_4}(D)\chi_{3,3}\|_{L^2}^2)\\
& \lesssim \mu^2 \mu_4^3 V^2 + (\mu_2^{-1}+\mu^{-2})\hat{E}(\chi)
= \mu^2 (\mu^2 \mu_2)^3V^2 + (\mu_2^{-1}+\mu^{-2})\hat{E}(\chi).
\end{split}
\end{align}
Absorbing the first right hand contribution from \eqref{eq:optimize_3} into the left hand side, we choose $\mu_2 \sim \mu^{-\frac{8}{3}} V^{-\frac{1}{3}}$. As a consequence, inserting this back into \eqref{eq:optimize_3}, we arrive at
\begin{align*}
V & = \|\chi_{3,3}\|_{L^2}^2 \lesssim
(\mu^{\frac{8}{3}} V^{\frac{1}{3}} +\mu^{-2})\hat{E}(\chi).
\end{align*}
Choosing $\mu\sim V^{-\frac{1}{14}}$, we obtain that 
\begin{align*}
V \lesssim V^{\frac{1}{7}} \hat{E}(\chi),
\end{align*}
which yields the claim.
\end{proof}

\begin{rmk}\label{rmk:poly-ex}
We provide examples of nonlinear polynomials such that
\begin{equation}\label{eq:nonlinear-8-wells}
\chi_{2,2}=f_{1,2}(\chi_{1,1}), \quad \chi_{3,3}=f_{1,3}(\chi_{1,1}), \quad \chi_{3,3}=f_{2,3}(\chi_{2,2}).
\end{equation}
This can be found for instance by interpolation.
Hence, defining the numerical constants $a=1344$, $b=1440$, $c=576$, $d=5040$, the polynomials
\begin{align*}
f_{1,2}(t) &=-\frac{1}{a}t^8+\frac{11}{b}t^6+\frac{83}{c}t^4-\frac{5801}{d}t^2,\\
\quad
f_{1,3}(t) &=5\Big(\frac{1}{a}t^8-\frac{31}{b}t^6+\frac{101}{c}t^4-\frac{1781}{d}t^2\Big),\\
f_{2,3}(t) &=\frac{5}{12}t^4-\frac{7}{12}t^2,
\end{align*}
comply with \eqref{eq:nonlinear-8-wells}.
\end{rmk}

\section{An infinite-order laminate: Setting for the Tartar case}
\label{sec:tartar}

Last but not least, we turn to the proof of Theorem \ref{thm:Tartar}. 
Here the phase indicator $\chi$ has the form
$$
\chi=\begin{pmatrix}-\chi_1+\chi_3-3\chi_2+3\chi_4&0\\0&-3\chi_1+3\chi_3+\chi_2-\chi_4\end{pmatrix}
$$
with $\chi_j\in BV(\R^2;\{0,1\})$ with $\chi_1+\chi_2+\chi_3+\chi_4\le1$.
For this well structure each component $\chi_{j,j}$ determines the other by a nonlinear polynomial relation
\begin{equation}\label{eq:nonlinearT4}
\chi_{1,1}=f(\chi_{2,2}) \quad \text{and} \quad \chi_{2,2}=g(\chi_{1,1}).
\end{equation}
This is a consequence of the rank-one-incompatibility of $K$, for a detailed treatment see \cite{RT21}.

After non-dimensionalization it suffices to prove the following estimates:

\begin{thm}
\label{thm:Tartar_non}
Let $\hat{E}(V)$ be as in \eqref{eq:en-vol-scaled} and let $K$ be as in \eqref{eq:T4_wells}.
Then there exist four positive constants $C^{(1)},C^{(2)}>0$, $C_2>C_1>0$ depending on $K$ such that for every $V>0$ there holds
\begin{equation}\label{eq:thm-T4-1}
C_1 \hat r^{(1)}(V) \le \hat E(V) \le C_2 \hat r^{(2)}(V),
\quad \text{where} \quad
\hat r^{(j)}(V)= \begin{cases}
V^\frac{1}{2} & \text{if } V\le1, \\
V\exp(-C^{(j)}\log(V)^\frac{1}{2}) & \text{if } V>1.
\end{cases}
\end{equation}
\end{thm}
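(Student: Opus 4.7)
The plan is to split into the small-volume regime $V\le 1$ and the large-volume regime $V>1$, and to derive the $\epsilon$-dependent bounds via the rescaling of Section \ref{rmk:normal}. For $V\le 1$, Section \ref{sec:small-reg} directly yields matching bounds $\hat E(V)\sim V^{1/2}$. The substantive case is $V>1$, where the fact that $\textbf{0}$ is an infinite-order laminate of $K$ forces the logarithmic exponent $\sqrt{\log V}$.

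For the lower bound when $V>1$, I would iterate the commutator estimate of Lemma \ref{lem:commutator2} $N$ times, alternately using the two polynomial relations in \eqref{eq:nonlinearT4}. Starting from the initial Fourier localization of Lemma \ref{lem:control-cones1} with parameters $\mu\in(0,1)$ and $\mu_2>0$, each iteration keeps the angular parameter $\mu$ fixed and shrinks the radial parameter by a factor $M\mu$. After $N$ steps, $\chi_{j,j}$ is controlled outside a cone of radius $\mu_{N+1}=(M\mu)^N\mu_2$, and assuming the errors $\alpha_k:=\|\chi_{j,j}-\chi_{j,\mu,\mu_{k+1}}(D)\chi_{j,j}\|_{L^2}^2$ all stay above $1$ (so that $\psi_\gamma$ acts linearly), the recursion produces $\alpha_N\lesssim C^N(\mu^{-2}+\mu_2^{-1})\hat E(\chi)$ for a constant $C=C(K,\gamma)>1$. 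Combining this with Lemma \ref{lem:low_freq} applied to the final cone yields
\begin{align*}
V\lesssim (M\mu)^{2N}\mu_2^2\mu V^2 + C^N(\mu^{-2}+\mu_2^{-1})\hat E(\chi).
\end{align*}
Writing $\tau=M\mu$ and optimizing in $\mu_2$, one is left with two competing lower bounds $\hat E(\chi)\gtrsim V^{1/2}(\tau C)^{-N}$ and $\hat E(\chi)\gtrsim V\tau^2 C^{-N}$; their equality forces $\tau^{N+2}\sim V^{-1/2}$, and a final optimization over $N$ selects $N+2\sim\sqrt{\log V/\log C}$. Plugging back gives $\hat E(\chi)\gtrsim V\exp(-C^{(1)}\sqrt{\log V})$ with $C^{(1)}\sim \sqrt{\log C}$.

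For the matching upper bound, I would employ a nested Conti-type laminate of order $N$, adapting the constructions of \cite[Section 6]{RT21} for the Tartar square under prescribed boundary data to the present nucleation setting by working on a diamond-shaped domain of volume $V$ as done in Sections \ref{sec:2+1}--\ref{sec:second_lam}. One adds $N$ successive branching levels, each aligned with a rank-one direction encoded by $K$ and built on a geometrically refined length scale; the distance of the resulting gradient to $K$ contracts by a uniform factor per level, giving an elastic contribution $\sim V\lambda^{-N}$ for some $\lambda>1$, while the surface energy grows roughly like $NV^{1/2}$ with controlled geometric corrections. Optimizing in $N$ again yields $N\sim\sqrt{\log V}$ and an upper bound $\hat E(V)\lesssim V\exp(-C^{(2)}\sqrt{\log V})$, which by Section \ref{rmk:normal} translates into the $\epsilon$-dependent statement \eqref{eq:thm-T4-1}.

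The principal obstacle in the lower bound is to maintain the linear regime of $\psi_\gamma$ throughout the iteration: the propagating constants $C^N$ must not overwhelm the cone-shrinking factors $(M\mu)^{2N}$, and the optimal $\mu_2$ must lie in $(0,1)$ so that $\mu_2^{-1}$ indeed dominates in the commutator error. This forces precisely the balance that produces the $\sqrt{\log V}$ scaling; invoking the nonlinear branch of $\psi_\gamma$ would in principle be possible but would degrade the bound. For the upper bound, the delicate technical point is the compatible gluing of the $N$ laminate levels across the diamond boundary while keeping the total perimeter of order $NV^{1/2}$, which is by now a standard machinery that can be adapted from \cite{RT21}.
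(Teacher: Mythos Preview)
Your lower bound argument is essentially the same as the paper's and is correct in substance. One small remark: you justify the linearity of $\psi_\gamma$ by ``assuming the errors $\alpha_k$ all stay above $1$''. This need not hold, but it is not needed either: what matters is that the \emph{upper bounds} $\beta_k$ for the $\alpha_k$ satisfy $\beta_k\ge\beta_0\gtrsim(\mu^{-2}+\mu_2^{-1})\hat E(\chi)\gtrsim 1$ by the isoperimetric inequality, and monotonicity of $\psi_\gamma$ then lets you iterate linearly on the bounds.

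The upper bound sketch, however, contains a genuine gap in the energy accounting. With an $N$-fold nested laminate for the Tartar square the surface energy does \emph{not} scale like $NV^{1/2}$. Each successive lamination level must be carried out at a much finer scale than the previous one (because the new rank-one direction is transversal to the old one), and the resulting hierarchy of scales $r_1>r_2>\dots>r_N$ forces the perimeter to depend on the ratios $r_j/r_{j-1}$ and on $1/r_N$. In the paper's quantification (following \cite{W97,RT21}) one obtains, on a domain of side $L\sim V^{1/2}$,
\[
\hat E\lesssim V\Big(2^{-N}+\frac{r}{L}+\frac{L^{N-1}}{r^{N}}\Big),
\]
and after optimizing $r\sim L^{N/(N+1)}$ the surface-type contribution is $V\cdot L^{-1/(N+1)}=V\exp\big(-\tfrac{\log L}{N+1}\big)$, not $NV^{1/2}$. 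Balancing this against the elastic term $V\,2^{-N}$ is precisely what yields $N\sim\sqrt{\log V}$. With your stated scalings $V\lambda^{-N}+NV^{1/2}$ the optimization would instead give $N\sim\log V$ and an upper bound of order $V^{1/2}\log V$, which is strictly smaller than the lower bound you just proved --- a contradiction. So the heuristic must be corrected before the upper bound goes through; once you track the nested length scales as above (and there is no need for a diamond-shaped domain; a square suffices), the argument closes.
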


Similarly as in the previous sections, we split this into an upper bound construction and lower bound estimates which we provide in the next subsections.

\subsection{Upper bounds}
%
%
We begin by providing an upper bound construction:

\begin{prop}\label{prop:T4:ub}
Let $\hat{E}(u,\chi)$ be as in \eqref{eq:en-scaled} and let $K$ be as in \eqref{eq:T4_wells}.
Let $V>1$ be given and let $\Omega=[0,L]\times[0,H]$ with $H\le L$,

\begin{equation}\label{eq:geom-non-deg}
L\exp\big(-C\log(L)^\frac{1}{2}\big)<cH,
\end{equation}
for some universal constant $0<c<1$ and $HL=V$.
Then there exist $u\in W_0^{1,\infty}(\Omega;\R^2)$ and $\chi\in BV(\R^2;K_0)$ with $\supp(\chi)=\Omega$ such that
\begin{align*}
\hat E(u,\chi)\lesssim V \exp\big(-C^{(2)}\log(V)^\frac{1}{2}\big)
\end{align*}
for some constant $C^{(2)}>0$.
\end{prop}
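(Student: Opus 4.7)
The plan is to adapt the self-similar iterative branching construction from \cite{RT21}, originally developed for the Tartar-square singular perturbation problem with prescribed affine boundary data, to our nucleation setting. The key principle is that, since the zero matrix lies only in the \emph{infinite}-order laminate hull of $K$, any finite $N$-th order laminate approximation leaves a residual elastic error that decays geometrically in $N$, while the surface energy to resolve an $N$-th order laminate grows geometrically in $N$. Optimizing over $N$ then yields $N\sim\sqrt{\log V}$ and the stretched-exponential correction $\exp(-C^{(2)}\sqrt{\log V})$.

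Concretely, I would first build the deformation on a slightly shrunken subrectangle $\Omega'\subset\Omega$ as an $N$-th order Tartar laminate following the branches of the $K$-tree. At each level $k=0,1,\dots,N$, the existing cells are refined by fine-scale oscillations of scale $h_k=h_0\rho_1^k$ between appropriate rank-one connected elements of $K^{(N-k)}$; since consecutive matrices in this tree are not rank-one compatible in the directions required for the next level, each level transition is realized via a local branching block analogous to the one of Proposition \ref{prop:3wells-ub} (and ultimately of \cite[Lemma 3.2]{RT22}), applied in alternating axis directions. Outside $\Omega'$ the construction is then extended to the zero boundary datum by an affine interpolation in a cut-off layer of width comparable to $H$, with $\chi\equiv 0$ there. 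The non-degeneracy condition $L\exp(-C\log(L)^{1/2})<cH$ guarantees that the finest scale $h_N$ produced by the iteration still fits geometrically within $\Omega'$ and that the cut-off layer has only moderate area.

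Summing the contributions, the total energy splits into an elastic error in $\Omega'$ of order $V\rho_2^{N}$ stemming from the finite Tartar-tree approximation, a surface energy in $\Omega'$ of order $V/h_N = V\rho_1^{-N}/h_0$, and a cut-off cost $\lesssim L$. Balancing the first two by choosing $N\sim\sqrt{\log(V/H)}\sim\sqrt{\log V}$, and absorbing the cut-off term thanks to the non-degeneracy assumption, produces the desired bound $\hat{E}(u,\chi)\lesssim V\exp(-C^{(2)}\sqrt{\log V})$. The main technical obstacle will be the bookkeeping at each of the $N$ lamination levels: because $K$ is pairwise rank-one incompatible, every level transition requires a genuine branching sub-construction, and one must check that the elastic and surface errors contributed by these transitions both decay geometrically in $k$ rather than accumulating to logarithmic or polynomial loss. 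This is precisely the content of the Tartar-square construction of \cite{RT21}, which one transports here almost verbatim, the only genuinely new point being that the prescribed-boundary-data setting must be replaced by a domain-flexible nucleation setting in which the zero boundary datum is attained in a thin layer near $\p\Omega$.
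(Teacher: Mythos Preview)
Your overall strategy coincides with the paper's: both invoke the $N$-th order Tartar construction from \cite{W97,RT21}, balance the residual elastic error $\sim V 2^{-N}$ against the surface cost of the finest scale, and optimize at $N\sim\sqrt{\log V}$ (the paper writes this as $k\sim\sqrt{\log L}$, which is equivalent since $L\ge\sqrt V$).

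The one point where your description diverges from the paper is the boundary matching. The paper imports the construction of \cite[Theorem~3.1]{W97} verbatim; there the zero boundary datum is achieved \emph{within} the branching itself (the outermost lamination layer of width $r$ interpolates to zero), and its cost appears as the term $LH\cdot\frac{r}{L}=Hr$ in the cited estimate. The non-degeneracy hypothesis \eqref{eq:geom-non-deg} is precisely the condition $r<cH$ that makes this outermost branching layer fit inside $\Omega$. By contrast, you propose a separate affine cut-off in a layer ``of width comparable to $H$'': taken literally this does not fit (the domain itself has side $H$, so $\Omega'$ would collapse), and your asserted cost ``$\lesssim L$'' is not substantiated---the actual elastic cost of interpolating a profile of amplitude $h_0$ to zero over a strip of width $w$ along a side of length $L$ is $\sim h_0^2 L/w$, which only reproduces the paper's $Hr$ once you take $w\sim h_0\sim r$ and use $r<cH$. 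So your cut-off step should be replaced (or identified) with the outermost branching layer already present in the Winter construction; once that is done the two arguments are the same.
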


The argument for this relies on a quantitative (in the volume) analysis of the constructions from \cite{W97,C99,RT21}.

\begin{proof}
Let $r>0$ be a parameter to be determined, depending on $L$ and $H$ and complying with
\begin{equation}\label{eq:non-deg}
r<c H, \quad \text{for some universal constant } c<1.
\end{equation}
For such $r$ we define the following construction:
Consider $r_j>0$ for $j\in \N$, $j\geq 2$, such that $r_{j+1}<r_j$ and $r_2<r$.
Here, we assume that $r_j$ can be expressed in terms of $r$. 
Let $u_{r,k}$ be obtained via the $k$-th-order lamination construction defined in Step 1 of the proof of \cite[Theorem 3.1]{W97} (see also \cite[Section 2]{RT21}).
Using the notation of \cite{RT21} we also take $\chi_{r,k}$ to be the projection of $\nabla u_{r,k}$ onto $K_0$.
In \cite{W97} (equation at page 17 with $\epsilon=1$) it is proved that
\begin{equation}\label{eq:ub-vol}
\hat E(u_{r,k},\chi_{r,k})\lesssim LH\Big(\frac{1}{2^k}+\frac{r}{L}+\sum_{j=2}^k2^{-j}\frac{r_j}{r_{j-1}}+2^{-j}\frac{1}{r_k}\Big).
\end{equation}
The same estimate can be found by reworking the lines of \cite{RT21} as well.
A good choice for the length scales $r_j$ comes from an optimization procedure; by imposing $\frac{r}{L}\sim\frac{r_j}{r_{j-1}}$ we get
$r_j\sim\frac{r^j}{L^{j-1}}$.
With this choice \eqref{eq:ub-vol} reduces to
$$
\hat E(u_{r,k},\chi_{r,k})\lesssim H\big(L 2^{-k}+r+L^k r^{-k}\big).
$$
Optimizing $r$ in terms of $k$ and $L$ leads to $r\sim L^\frac{k}{k+1}$.
A further optimization argument in $k$ implies $2^k\sim L^\frac{1}{k+1}$, which is nontrivial since $L>1$.
This finally results in the choice $k\sim\log(L)^\frac{1}{2}$.

We let $u:=u_{\hat r,\hat k}$ and $\chi:=\chi_{\hat r,\hat k}$ with
$$
\hat k=c_1\log(L)^\frac{1}{2}
\quad \text{and} \quad
\hat r=c_2 L^\frac{\hat k}{\hat k+1}\sim L\exp\big(-C\log(L)^\frac{1}{2}\big),
$$
for some constants $C, c_1,c_2>0$.
We observe that condition \eqref{eq:non-deg} corresponds to \eqref{eq:geom-non-deg}
which is satisfied at least for all the inclusion domains such that $H\sim L$.
The result follows by noticing that, after optimization $\hat E(u,\chi)\sim H\hat r$.
\end{proof}

\begin{rmk}
We highlight that the condition \eqref{eq:geom-non-deg} can be viewed as a geometric information on the inclusion domains, in the sense that (scaling) optimal realizations of the type discussed above cannot be too thin.
\end{rmk}

\subsection{Lower bounds}

We complement the upper bound construction with an ansatz-free lower bound estimate.

\begin{prop}\label{prop:T4-lb}
Let $\hat{E}(V)$ be as in \eqref{eq:en-vol-scaled} and let $K$ be as in \eqref{eq:T4_wells}.
Then for every $V>1$ there holds
\begin{align*}
\hat{E}(V) \gtrsim V\exp\big(-C^{(1)}\log(V)^\frac{1}{2}\big),
\end{align*}
for some constant $C^{(1)}>0$.
\end{prop}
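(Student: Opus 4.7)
The plan is to deduce the lower bound by an iterated commutator argument in frequency space, using that the Tartar square gives rise to the polynomial relations $\chi_{1,1}=f(\chi_{2,2})$ and $\chi_{2,2}=g(\chi_{1,1})$ recorded in \eqref{eq:nonlinearT4}. The infinite-order lamination character of $K$ is reflected in the possibility of iterating Lemma \ref{lem:commutator2} indefinitely, alternating the role of $\ell\in\{1,2\}$: each iteration shrinks the Fourier cone's radius by a factor $M\mu$ at the cost of a multiplicative constant. The sub-exponential loss $\exp(-C^{(1)}\log(V)^\frac{1}{2})$ will arise from optimizing the number of iterations against this multiplicative loss.

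Fix $\mu\in(0,1)$ and $\mu_2>0$ to be chosen. Define radii $\rho_0:=\mu_2$ and $\rho_{i+1}:=M\mu\,\rho_i$, so that $\rho_k=(M\mu)^k\mu_2$, and write $z_k^{(j)}:=\|\chi_{j,j}-\chi_{j,\mu,\rho_k}(D)\chi_{j,j}\|_{L^2}^2$. Lemma \ref{lem:control-cones1} yields the base case $z_0^{(j)}\lesssim(\mu^{-2}+\mu_2^{-1})\hat E(\chi)$ for $j\in\{1,2\}$. Applying Lemma \ref{lem:commutator2} with $\mu'=\mu''=\rho_k$ and $\tilde\mu=\rho_{k+1}$, alternating the role of $\ell$ with $j$ via \eqref{eq:nonlinearT4}, I expect to derive inductively
\begin{equation*}
z_k^{(j)} \lesssim C^k\big(\mu^{-2}+\mu_2^{-1}\big)\hat E(\chi), \quad j\in\{1,2\}.
\end{equation*}
The $\psi_\gamma$ nonlinearity in Lemma \ref{lem:commutator2} will be handled by combining $V>1$ with the isoperimetric lower bound $\hat E(\chi)\gtrsim V^\frac{1}{2}\gtrsim 1$ from Section \ref{sec:small-reg} and the trivial bound $z_k^{(j)}\leq 2V$, which together ensure that the iteration can at most inflate any sub-unit value into an additive contribution dominated by the multiplicative $C^k$ in the main term.

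Combining this iterated estimate with the low-frequency control of Lemma \ref{lem:low_freq} applied to $\chi_{j,\mu,\rho_k}(D)\chi_{j,j}$ then leads to
\begin{equation*}
V = \|\chi_{1,1}\|_{L^2}^2 \lesssim \mu\,\rho_k^2 V^2 + z_k^{(1)} \lesssim C^k \mu^{2k+1}\mu_2^2 V^2 + C^k\big(\mu^{-2}+\mu_2^{-1}\big)\hat E(\chi).
\end{equation*}
Taking $\mu=\mu_2$ and then choosing $\mu\sim(C^k V)^{-\frac{1}{2k+3}}$ in order to absorb the first right hand term into the left yields
\begin{equation*}
\hat E(\chi) \gtrsim C^{-k}\, V^{\frac{2k+1}{2k+3}}.
\end{equation*}
Finally, I would optimize over $k$: writing $\log\hat E(\chi) \gtrsim \log V - \tfrac{2\log V}{2k+3} - k\log C$, the loss $\tfrac{2\log V}{2k+3} + k\log C$ is minimized over real $k\geq 0$ at $k\sim\sqrt{\log V/\log C}$ with optimal value $\sim\sqrt{\log V\cdot\log C}$, producing $\hat E(\chi)\gtrsim V\exp(-C^{(1)}\log(V)^\frac{1}{2})$ with $C^{(1)}$ proportional to $\sqrt{\log C}$.

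The main technical task will be to rigorously track the accumulation of constants through the $k$-fold iteration of Lemma \ref{lem:commutator2}, i.e.\ to confirm that the $\psi_\gamma$ factor degrades the bound by only a multiplicative $C^k$ (rather than, say, a super-exponential loss). The asymmetric alternation $\chi_{1,1}\leftrightarrow\chi_{2,2}$ afforded by the rank-one incompatibility of the Tartar configuration \eqref{eq:nonlinearT4}, together with the large-volume regime $V>1$, is exactly what enables this clean exponential-in-$k$ dependence; once it is in place, the $k$-optimization above is routine.
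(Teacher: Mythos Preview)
Your proposal is correct and follows essentially the same route as the paper: iterate Lemma \ref{lem:commutator2} via the polynomial relations \eqref{eq:nonlinearT4}, control the low-frequency cone by Lemma \ref{lem:low_freq}, and optimize the iteration depth $k\sim\sqrt{\log V}$. The paper keeps $\mu$ and $\mu_2$ as separate parameters and optimizes them in turn, whereas you set $\mu=\mu_2$; this is a harmless simplification that yields the same scaling (the resulting exponents $(2k+1)/(2k+3)$ and the paper's $(2m-1)/(2m+1)$ differ only by an index shift).
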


\begin{proof}
As in \cite{RT21}, we first observe that since the $\chi_{1,1}$ component determines the $\chi_{2,2}$ component and vice versa by \eqref{eq:nonlinearT4}, we can iterate the commutator bounds of Lemma \ref{lem:commutator2}. 
This yields that for any $m\in2\N$ it holds that
\begin{align*}
V &\le \|(\chi_{1,\mu,\mu_m}(D)+\chi_{2,\mu,\mu_{m+1}}(D))\chi\|_{L^2}^2+\|\chi_{1,\mu,\mu_m}(D)\chi_{1,1}-\chi_{1,1}\|_{L^2}^2 \\
& \quad +\|\chi_{2,\mu,\mu_{m+1}}(D)\chi_{2,2}-\chi_{2,2}\|_{L^2}^2 \\
&\lesssim \mu\mu_m^2V^2+C^m(\mu^{-2}+\mu_2^{-1})\hat E(\chi).
\end{align*}
Here we have already used that, by the assumption that $V\geq 1$ and by the isoperimetric estimates, we do not have losses due to the fact that $\hat E(\chi)\gtrsim V^\frac{1}{2}$. In particular, we may consider the commutator with some $\gamma$ fixed and still the leading term is the linear one.
Since $\mu_m\sim c^m\mu^{m-2}\mu_2$ for some $c>1$, we have
$$
V \lesssim c^{2m}\mu^{2m-3}\mu_2^2V^2+C^m(\mu^{-2}+\mu_2^{-1})\hat E(\chi).
$$
From $V\sim c^m\mu^{2m-3}\mu_2^2V^2$ we infer $\mu_2\sim V^{-\frac{1}{2}}c^{-m}\mu^{-\frac{2m-3}{2}}$ which gives
$$
V\lesssim C_0^m(\mu^{-2}+\mu^\frac{2m-3}{2}V^\frac{1}{2})\hat E(\chi)
$$
with $C_0>Cc$.
An optimization, i.e. $\mu\sim V^\frac{1}{2m+1}$, leads to
$$
V\lesssim C_0^m V^\frac{2}{2m+1}\hat E(\chi).
$$
By taking $m\in2\N$ such that $C_0^m\sim V^\frac{2}{2m+1}$, which is $m\sim\log(V)^\frac{1}{2}$, we obtain
$$
V \exp(-C^{(1)}\log(V)^\frac{1}{2})\lesssim\hat E(\chi).
$$
\end{proof}

\section*{Acknowledgements}
Both authors gratefully acknowledge funding by the Deutsche Forschungsgemeinschaft (DFG, German Research Foundation) through SPP 2256, project ID 441068247. 
A.R. is a member of the Heidelberg STRUCTURES Excellence Cluster, which is funded by the Deutsche Forschungsgemeinschaft (DFG, German Research Foundation) under Germany's Excellence Strategy EXC 2181/1 - 390900948.

\appendix

\section{Branching constructions in rectangular domains}

In this section we present the branching constructions as alternatives to the diamond-shaped constructions from the main body of the text.

\subsection{A branching construction for the upper bound from Theorem \ref{thm:2well_norm}}
\label{app:first}

We begin by presenting the branching construction for the upper bound from Theorem \ref{thm:2well_norm}.
To do so we refer to \cite[Lemma 3.2]{RT22} which in turn reworks \cite[Lemma 2.3]{CC15}:

\begin{prop}\label{prop:2wells-ub_branching}
Let $\hat{E}(V)$ be as in \eqref{eq:en-vol-scaled} and let $K$ be as in \eqref{eq:two_wells_proof}.
Let $V>1$ be given and let $\Omega=[0,L]\times[0,H]$ with $H>L>1$ and $HL=V$.
Then there exist $u\in W_0^{1,\infty}(\Omega;\R^2)$ and $\chi\in BV(\R^2;K_0)$ with $\supp(\chi)=\Omega$ such that
$$
\hat E(u,\chi)\lesssim V^\frac{3}{5}.
$$
\end{prop}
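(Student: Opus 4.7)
The plan is to obtain the construction by directly invoking the two-well branching building block from \cite[Lemma 3.2]{RT22}, in the same spirit as Step 2 of the proof of Proposition \ref{prop:3wells-ub}. The key observation is that the two martensitic wells satisfy $(1-\lambda) A + \lambda B = \textbf{0}$, so the austenite phase is realized as the macroscopic average of a rank-one laminate between $A$ and $B$ in the $e_1$-direction. A branching construction matching the affine boundary datum associated to this average therefore automatically produces a compactly supported deformation with the correct zero boundary data.

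More precisely, I would apply \cite[Lemma 3.2]{RT22} on the rectangle $\Omega = [0, L] \times [0, H]$ (with $N=1$ and, if necessary, with the roles of $x_1$ and $x_2$ interchanged so that the lamination direction aligns with $e_1$), using the phases $A$ and $B$ with volume fractions $1-\lambda$ and $\lambda$. Since $(1-\lambda) A + \lambda B = \textbf{0}$, the resulting function $v \in W^{1,\infty}(\Omega; \R^2)$ satisfies $v \equiv 0$ on $\p\Omega$ and may be extended by zero outside $\Omega$ to yield $u \in W_0^{1,\infty}(\Omega; \R^2)$. Taking $\chi$ to be the projection of $\nabla u$ onto $K_0$, the cited lemma delivers the estimate
\begin{equation*}
\hat E(u, \chi) \lesssim \frac{L^3}{H} + H.
\end{equation*}

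The final step is to optimize subject to the constraint $LH = V$. Balancing the two terms via $L^3/H \sim H$ yields $H \sim L^2$, and combined with the volume constraint this gives $L \sim V^{2/5}$, $H \sim V^{3/5}$, hence $\hat E(u, \chi) \lesssim V^{3/5}$. The compatibility hypothesis $H > 4L$ of \cite[Lemma 3.2]{RT22} reduces to $V^{1/5} \gtrsim 1$, which is satisfied for $V$ sufficiently large; the remaining bounded range of $V$ is absorbed either by the small-volume estimates of Section \ref{sec:small-reg} or by a trivial unit-cube construction with a cutoff layer. The main obstacle is essentially a matter of verifying the applicability of the cited lemma to our geometry: once the rank-one compatibility $B - A = e_1 \otimes e_1$ and the correct volume fractions are identified, the scaling law follows by the same one-parameter optimization already encountered in the lens-shaped construction of Proposition \ref{prop:2wells-ub}.
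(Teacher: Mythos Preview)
Your approach is essentially the same as the paper's: both invoke \cite[Lemma 3.2]{RT22} on the rectangle $[0,L]\times[0,H]$, obtain the bound $\hat E(u,\chi)\lesssim L^3/H+H$, and optimize. The only technical difference is that the paper takes $N=\lceil 4L/H\rceil$ (which is between $1$ and $4$ since $H>L$), so that the compatibility condition of the cited lemma is satisfied for the full range $H>L>1$ without needing a separate argument for small $V$; your choice $N=1$ forces the extra hypothesis $H>4L$ and the patch you describe.

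One arithmetic slip: balancing $L^3/H\sim H$ gives $H\sim L^{3/2}$, not $H\sim L^2$. Your subsequent values $L\sim V^{2/5}$, $H\sim V^{3/5}$ are nonetheless correct (they are consistent with $H\sim L^{3/2}$, not with $H\sim L^2$), so the error is purely typographical and does not affect the conclusion.
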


\begin{proof}
The construction is that of \cite[Lemma 3.2]{RT22} corresponding to $N=\lceil\frac{4L}{H}\rceil$, where $\lceil\cdot\rceil$ denotes the upper integer-part.
From formula (19) of \cite[Lemma 3.2]{RT22} with $p=2$ and $\epsilon=1$ we get
\begin{equation}\label{eq:bra1-ub}
\hat E(u,\chi) \le C\Big(\frac{L^3}{H}+H\Big),
\end{equation}
where $\chi$ is the projection of $\nabla u$ onto $K_0$ and $C>0$ is a positive constant depending on $K$.
Notice also that $1\le N\le 4$ and thus its contribution in (19) of \cite[Lemma 3.2]{RT22} is absorbed by the constant $C$ in the inequality above.
By an optimization argument we obtain $L\sim H^\frac{2}{3}$ which implies $V=H^\frac{5}{3}$ and the result follows.
\end{proof}

\subsection{A branching construction for the upper bound from Corollary \ref{cor:lamination_1}}
We complement the higher dimensional lens-shaped upper bound construction from the proof of Corollary \ref{cor:lamination_1} 
with the corresponding branching construction.

\begin{proof}
We follow a similar construction as in the first order outer laminate from \cite[Proposition 6.3]{RT21}.
Let $u,\chi:\R^2\to\R^2$ be the functions introduced in the proof of Proposition \ref{prop:2wells-ub_branching} in the previous subsection.
Then, we consider $\Omega=[0,L]\times[0,H]^{n-1}$ and define $\tilde u(x_1,x_2,\dots,x_n)=u(x_1,\rho(x_2,\dots,x_n))$ and
$\tilde \chi(x_1,x_2,\dots,x_n)=\chi(x_1,\rho(x_2,\dots,x_n))$ with
$$
\rho(x_2,\dots,x_n)=\max_{2\le j\le n-1}\Big\{\Big|x_j-\frac{H}{2}\Big|\Big\}+\frac{H}{2}.
$$
Hence \eqref{eq:bra1-ub} gives
$$
\hat E(\tilde u,\tilde\chi)\lesssim \Big(\frac{L^3}{H}+H\Big)H^{n-2}.
$$
By optimization $L\sim H^\frac{2}{3}$, thus the constraint $LH^{n-1}=V$ yields $H\sim V^\frac{3}{3n-1}$ and thus the desired result.
\end{proof}

\subsection{A branching construction for the upper bound from Theorem \ref{thm:second}}
\label{app:second}

In addition to the lens-shaped construction from the proof of Theorem \ref{thm:second} which was presented in the main body of the text, an alternative upper bound construction for Theorem \ref{thm:second} can be realized by a double branching construction, where the coarsest oscillation happens a number of times of order $1$.
The construction retraces the steps of \cite[Section 5]{RT22} keeping track of $H$ and $L$ (that are constants there).

\begin{proof}[Proof of Proposition \ref{prop:3wells-ub} by means of a double branching construction]
As a first step we consider the construction given in Proposition \ref{prop:2wells-ub} with gradients
$$
B_1=\frac{1}{3}A_1+\frac{2}{3}A_2
\quad \text{and} \quad
B_2=\frac{1}{3}A_3+\frac{2}{3}A_4.
$$
We denote this macroscopic construction with $\tilde u$ and $\tilde\chi$.
Without loss of generality we assume that we have only one tree of branching.

We briefly describe the construction for the sake of clarity of exposition:
$\Omega$ is subdivided in $2^j$ identical cells equal (up to horizontal translation) to $[0,\ell_j]\times[h_j,h_{j+1}]$ for every $j\ge0$ corresponding to the generation.
This process stops when $j$ is such that $\ell_j>h_j$, then using a cut-off argument to match the austenite phase.
Here $\ell_j=L2^{-j}$ and $h_j=\frac{H}{2}\theta^j$ for some $\theta\in(\frac{1}{4},\frac{1}{2})$.
In each cell $\tilde u$ is defined by \cite[Lemma 3.1]{RT22} with $A=B_1$ and $B=B_2$.

Now we build $u$ and $\chi$ replacing $\tilde u$ and $\tilde\chi$ with a branching construction, having gradients $A_1$, $A_2$ on $\{\tilde\chi=B_1\}$ and $A_3$, $A_4$ on $\{\tilde\chi=B_2\}$, respectively.
For each generation the set $\{\tilde\chi=B_1\}$ consists (up to translation) of $R_j\cup\varphi(R_j)$ where
$$
R_j=\Big[0,\frac{\ell_j}{4}\Big]\times[0,h_j]
\quad \text{and} \quad
\varphi(x_1,x_2)=\Big(\frac{\ell_j x_2}{4h_j},x_2\Big).
$$
In each rectangle $R_j$ we apply \cite[Lemma 3.2]{RT22} with $p=2$ and $\epsilon=1$ (inverting the roles of $x_1$ and $x_2$); its energy contribution is
$$
(2\theta^3)^j\frac{H^3}{N_j^2 L}+2^{-j}LN_j.
$$
The shear gives an additional elastic energy term of order $(\frac{\ell_j}{h_j})^2h_j\ell_j$ thus, the contribution in $R_j\cup\varphi(R_j)$ is
$$
(2\theta^3)^j\frac{H^3}{N_j^2 L}+2^{-j}LN_j+\frac{\ell_j^3}{h_j}.
$$
We refer the interested reader to \cite[Section 5.1]{RT22} for details, in particular the energy contribution above corresponds to the first formula at page 28 there.
An optimization argument gives $N_j\sim HL^{-\frac{2}{3}}(\theta 2^\frac{2}{3})^j$.
The construction on $\{\tilde\chi=B_2\}$ is completely analogous.
Multiplying by the number of cells $2^j$ and summing over $j$ we infer
$$
\hat E(u,\chi) \lesssim \sum_{j=0}^\infty (\theta 2^\frac{2}{3})^j(HL^\frac{1}{3})+(\theta^{-1}2^{-3})^j \frac{L^3}{H}\lesssim HL^\frac{1}{3}+\frac{L^3}{H}.
$$
By another optimization argument we choose $L\sim H^\frac{3}{4}$ which implies (from the volume constraint) that $L\sim V^\frac{3}{7}, H\sim V^\frac{4}{7}$ and the result follows.
\end{proof}

\bibliographystyle{alpha}
\bibliography{citations1}

\newcommand{\etalchar}[1]{$^{#1}$}
\begin{thebibliography}{NBKW{\etalchar{+}}17}

\bibitem[AH86]{AH86}
Robert~J Aumann and Sergiu Hart.
\newblock Bi-convexity and bi-martingales.
\newblock {\em Israel Journal of Mathematics}, 54(2):159--180, 1986.

\bibitem[AKKR22]{AKKR22}
Ibrokhimbek Akramov, Hans Knüpfer, Martin Kru{\v{z}}ik, and Angkana R\"uland.
\newblock Minimal energy for geometrically nonlinear elastic inclusions in two
  dimensions.
\newblock {\em In preparation}, 2022.

\bibitem[Bal89]{B4}
John~M Ball.
\newblock A version of the fundamental theorem for {Y}oung measures.
\newblock In {\em PDEs and continuum models of phase transitions}, pages
  207--215. Springer, 1989.

\bibitem[BG15]{BG15}
Peter Bella and Michael Goldman.
\newblock Nucleation barriers at corners for a cubic-to-tetragonal phase
  transformation.
\newblock {\em Proceedings of the Royal Society of Edinburgh Section A:
  Mathematics}, 145(4):715--724, 2015.

\bibitem[BJ89]{B3}
John~M. Ball and Richard~D. James.
\newblock Fine phase mixtures as minimizers of energy.
\newblock In {\em Analysis and Continuum Mechanics}, pages 647--686. Springer,
  1989.

\bibitem[BJ92]{BJ92}
John~M Ball and Richard~D James.
\newblock Proposed experimental tests of a theory of fine microstructure and
  the two-well problem.
\newblock {\em Phil. Trans. R. Soc. Lond. A}, 338(1650):389--450, 1992.

\bibitem[BK97]{Bhat}
Kaushik Bhattacharya and Robert~V Kohn.
\newblock Elastic energy minimization and the recoverable strains of
  polycrystalline shape-memory materials.
\newblock {\em Archive for Rational Mechanics and Analysis}, 139(2):99--180,
  1997.

\bibitem[BK16]{BK16}
John~M Ball and Konstantinos Koumatos.
\newblock Quasiconvexity at the boundary and the nucleation of austenite.
\newblock {\em Archive for Rational Mechanics and Analysis}, 219(1):89--157,
  2016.

\bibitem[CC14]{CC14}
Allan Chan and Sergio Conti.
\newblock Energy scaling and domain branching in solid-solid phase transitions.
\newblock In {\em Singular phenomena and scaling in mathematical models}, pages
  243--260. Springer, 2014.

\bibitem[CC15]{CC15}
Allan Chan and Sergio Conti.
\newblock Energy scaling and branched microstructures in a model for
  shape-memory alloys with {$SO(2)$} invariance.
\newblock {\em Mathematical Models and Methods in Applied Sciences},
  25(06):1091--1124, 2015.

\bibitem[CDMZ20]{CDMZ20}
Sergio Conti, Johannes Diermeier, David Melching, and Barbara Zwicknagl.
\newblock Energy scaling laws for geometrically linear elasticity models for
  microstructures in shape memory alloys.
\newblock {\em ESAIM: Control, Optimisation and Calculus of Variations},
  26:115, 2020.

\bibitem[CDPR{\etalchar{+}}20]{CDPRZZ20}
Pierluigi Cesana, Francesco Della~Porta, Angkana R{\"u}land, Christian
  Zillinger, and Barbara Zwicknagl.
\newblock Exact constructions in the (non-linear) planar theory of elasticity:
  from elastic crystals to nematic elastomers.
\newblock {\em Archive for Rational Mechanics and Analysis}, 237(1):383--445,
  2020.

\bibitem[CDZ17]{CDZ17}
Sergio Conti, Johannes Diermeier, and Barbara Zwicknagl.
\newblock Deformation concentration for martensitic microstructures in the
  limit of low volume fraction.
\newblock {\em Calculus of Variations and Partial Differential Equations},
  56(1):16, 2017.

\bibitem[Chi99]{C99}
Michel Chipot.
\newblock The appearance of microstructures in problems with incompatible wells
  and their numerical approach.
\newblock {\em Numer. Math.}, 83(3):325--352, 1999.

\bibitem[CKO99]{CKO99}
Rustum Choksi, Robert~V Kohn, and Felix Otto.
\newblock Domain branching in uniaxial ferromagnets: a scaling law for the
  minimum energy.
\newblock {\em Communications in mathematical physics}, 201(1):61--79, 1999.

\bibitem[CKZ17]{CKZ17}
S~Conti, M~Klar, and B~Zwicknagl.
\newblock Piecewise affine stress-free martensitic inclusions in planar
  nonlinear elasticity.
\newblock {\em Proceedings of the Royal Society A: Mathematical, Physical and
  Engineering Sciences}, 473(2203):20170235, 2017.

\bibitem[CM99]{CM99}
Michel Chipot and Stefan M{\"u}ller.
\newblock Sharp energy estimates for finite element approximations of
  non-convex problems.
\newblock In {\em IUTAM Symposium on Variations of Domain and Free-Boundary
  Problems in Solid Mechanics}, pages 317--325. Springer, 1999.

\bibitem[CO09]{CO1}
Antonio Capella and Felix Otto.
\newblock A rigidity result for a perturbation of the geometrically linear
  three-well problem.
\newblock {\em Communications on Pure and Applied Mathematics},
  62(12):1632--1669, 2009.

\bibitem[CO12]{CO}
Antonio Capella and Felix Otto.
\newblock A quantitative rigidity result for the cubic-to-tetragonal phase
  transition in the geometrically linear theory with interfacial energy.
\newblock {\em Proceedings of the Royal Society of Edinburgh: Section A
  Mathematics, 142 , pp 273-327 doi:10.1017/S0308210510000478}, 2012.

\bibitem[Con00]{C1}
Sergio Conti.
\newblock Branched microstructures: scaling and asymptotic self-similarity.
\newblock {\em Comm. Pure Appl. Math}, 53(11):1448--1474, 2000.

\bibitem[Con08]{C}
Sergio Conti.
\newblock Quasiconvex functions incorporating volumetric constraints are
  rank-one convex.
\newblock {\em Journal de math{\'e}matiques pures et appliqu{\'e}es},
  90(1):15--30, 2008.

\bibitem[CT93]{CT93}
E~Casadio-Tarabusi.
\newblock An algebraic characterization of quasi-convex functions.
\newblock {\em Ricerche Mat}, 42(1):11--24, 1993.

\bibitem[CZ16]{CZ16}
Sergio Conti and Barbara Zwicknagl.
\newblock Low volume-fraction microstructures in martensites and crystal
  plasticity.
\newblock {\em Mathematical Models and Methods in Applied Sciences},
  26(07):1319--1355, 2016.

\bibitem[Gra08]{Grafakos}
Loukas Grafakos.
\newblock {\em Classical {F}ourier analysis}, volume~2.
\newblock Springer, 2008.

\bibitem[KK11]{KK}
Hans Kn{\"u}pfer and Robert~V Kohn.
\newblock Minimal energy for elastic inclusions.
\newblock {\em Proceedings of the Royal Society A: Mathematical, Physical and
  Engineering Sciences}, 467(2127):695--717, 2011.

\bibitem[KKO13]{KKO13}
Hans Kn{\"u}pfer, Robert~V Kohn, and Felix Otto.
\newblock Nucleation barriers for the cubic-to-tetragonal phase transformation.
\newblock {\em Communications on pure and applied mathematics}, 66(6):867--904,
  2013.

\bibitem[KM92]{KM2}
Robert~V Kohn and Stefan M{\"u}ller.
\newblock Branching of twins near an austenite—twinned-martensite interface.
\newblock {\em Philosophical Magazine A}, 66(5):697--715, 1992.

\bibitem[KM94]{KM1}
Robert~V Kohn and Stefan M{\"u}ller.
\newblock Surface energy and microstructure in coherent phase transitions.
\newblock {\em Communications on Pure and Applied Mathematics}, 47(4):405--435,
  1994.

\bibitem[KM13]{KM13}
Hans Kn{\"u}pfer and Cyrill~B Muratov.
\newblock On an isoperimetric problem with a competing nonlocal term {I}: The
  planar case.
\newblock {\em Communications on Pure and Applied Mathematics},
  66(7):1129--1162, 2013.

\bibitem[KO19]{KO19}
Hans Kn{\"u}pfer and Felix Otto.
\newblock Nucleation barriers for the cubic-to-tetragonal phase transformation
  in the absence of self-accommodation.
\newblock {\em ZAMM-Journal of Applied Mathematics and Mechanics/Zeitschrift
  f{\"u}r Angewandte Mathematik und Mechanik}, 99(2):e201800179, 2019.

\bibitem[KW14]{KW14}
Robert~V Kohn and Benedikt Wirth.
\newblock Optimal fine-scale structures in compliance minimization for a
  uniaxial load.
\newblock {\em Proceedings of the Royal Society A: Mathematical, Physical and
  Engineering Sciences}, 470(2170):20140432, 2014.

\bibitem[KW16]{KW16}
Robert~V Kohn and Benedikt Wirth.
\newblock Optimal fine-scale structures in compliance minimization for a shear
  load.
\newblock {\em Communications on Pure and Applied Mathematics},
  69(8):1572--1610, 2016.

\bibitem[Lor01]{L01}
Andrew Lorent.
\newblock An optimal scaling law for finite element approximations of a
  variational problem with non-trivial microstructure.
\newblock {\em ESAIM: Mathematical Modelling and Numerical
  Analysis-Mod{\'e}lisation Math{\'e}matique et Analyse Num{\'e}rique},
  35(5):921--934, 2001.

\bibitem[Lor06]{L06}
Andrew Lorent.
\newblock The two-well problem with surface energy.
\newblock {\em Proceedings of the Royal Society of Edinburgh Section A:
  Mathematics}, 136(4):795--805, 2006.

\bibitem[Mag12]{M12}
Francesco Maggi.
\newblock {\em Sets of finite perimeter and geometric variational problems: an
  introduction to {G}eometric {M}easure {T}heory}.
\newblock Number 135. Cambridge University Press, 2012.

\bibitem[MK14]{KM14}
Cyrill Muratov and Hans Kn{\"u}pfer.
\newblock On an isoperimetric problem with a competing nonlocal term {I}{I}:
  The general case.
\newblock {\em Communications on Pure and Applied Mathematics},
  67(12):1974--1994, 2014.

\bibitem[M{\"u}l99]{M1}
Stefan M{\"u}ller.
\newblock Variational models for microstructure and phase transitions.
\newblock In {\em Calculus of variations and geometric evolution problems},
  pages 85--210. Springer, 1999.

\bibitem[NBKW{\etalchar{+}}17]{Nie17}
R.~Niemann, A.~Backen, S.~Kauffmann-Weiss, C.~Behler, U.K. Rößler, H.~Seiner,
  O.~Heczko, K.~Nielsch, L.~Schultz, and S.~Fähler.
\newblock Nucleation and growth of hierarchical martensite in epitaxial shape
  memory films.
\newblock {\em Acta Materialia}, 132:327--334, 2017.

\bibitem[NM91]{NM91}
Vincenzo Nesi and Graeme~W Milton.
\newblock Polycrystalline configurations that maximize electrical resistivity.
\newblock {\em Journal of the Mechanics and Physics of Solids}, 39(4):525--542,
  1991.

\bibitem[RT21]{RT21}
Angkana R{\"u}land and Antonio Tribuzio.
\newblock On the energy scaling behaviour of singular perturbation models
  involving higher order laminates.
\newblock {\em arXiv preprint arXiv:2110.15929}, 2021.

\bibitem[RT22]{RT22}
Angkana R{\"u}land and Antonio Tribuzio.
\newblock On the energy scaling behaviour of a singularly perturbed {T}artar
  square.
\newblock {\em Archive for Rational Mechanics and Analysis}, 243(1):401--431,
  2022.

\bibitem[RTZ18]{RTZ19}
Angkana R{\"u}land, Jamie~M Taylor, and Christian Zillinger.
\newblock Convex integration arising in the modelling of shape-memory alloys:
  some remarks on rigidity, flexibility and some numerical implementations.
\newblock {\em Journal of Nonlinear Science}, pages 1--48, 2018.

\bibitem[R{\"u}l16]{Rue16b}
Angkana R{\"u}land.
\newblock A rigidity result for a reduced model of a cubic-to-orthorhombic
  phase transition in the geometrically linear theory of elasticity.
\newblock {\em Journal of Elasticity}, 123(2):137--177, 2016.

\bibitem[RZZ18]{RZZ18}
Angkana R{\"u}land, Christian Zillinger, and Barbara Zwicknagl.
\newblock Higher {S}obolev regularity of convex integration solutions in
  elasticity: The {D}irichlet problem with affine data in int({$K^{lc}$}).
\newblock {\em SIAM Journal on Mathematical Analysis}, 50(4):3791--3841, 2018.

\bibitem[Sch75]{Sch75}
Vladimir Scheffer.
\newblock Regularity and irregularity of solutions to nonlinear second-order
  elliptic systems of partial differential-equations and inequalities.
\newblock 1975.

\bibitem[SNB{\etalchar{+}}21]{Schwa21}
Stefan Schwabe, Robert Niemann, Anja Backen, Daniel Wolf, Christine Damm, Tina
  Walter, Hanu{\v{s}} Seiner, Oleg Heczko, Kornelius Nielsch, and Sebastian
  F{\"a}hler.
\newblock Building hierarchical martensite.
\newblock {\em Advanced Functional Materials}, 31(7):2005715, 2021.

\bibitem[Tar93]{T93}
Luc Tartar.
\newblock Some remarks on separately convex functions.
\newblock In {\em Microstructure and phase transition}, pages 191--204.
  Springer, 1993.

\bibitem[TX90]{TX90}
Shusong Tan and Huibin Xu.
\newblock Observations on a {C}u{A}l{N}i single crystal.
\newblock {\em Continuum Mechanics and Thermodynamics}, 2(4):241--244, 1990.

\bibitem[Win97]{W97}
Matthias Winter.
\newblock An example of microstructure with multiple scales.
\newblock {\em European J. Appl. Math.}, 8(2):185--207, 1997.

\end{thebibliography}

\end{document}